\definecolor{labelkey}{rgb}{0.6,0,0}
\numberwithin{equation}{section}
\providecommand{\ip}[1]{\langle#1\rangle}
\providecommand{\abs}[1]{\left\lvert#1\right\rvert}
\providecommand{\norm}[1]{\left\|#1\right\|}
\def\C{{\mathbb{C}}}
\def\varep{\varepsilon}
\def\bar{\overline}
\def\R{{\mathbb R}}
\def\R{{\bf R}}
\def\Z{{\mathbb Z}}
\def\bar{\overline}
\def\R{\mathbb{R}}
\def\T{{\mathbb T}}
\newtheorem{theorem}{Theorem}[section]
\newtheorem{lemma}[theorem]{Lemma}
\newtheorem{proposition}[theorem]{Proposition}
\newtheorem{definition}[theorem]{Definition}
\newtheorem{remark}[theorem]{Remark}
\begin{document}

\title[The Vlasov-Poisson system around homogeneous equilibria]{On the stability of homogeneous equilibria in the Vlasov-Poisson system on $\R^3$}

\author{A.\ D.\ Ionescu}
\address{Princeton University}
\email{aionescu@math.princeton.edu}
\author{B.\ Pausader}
\address{Brown University}
\email{benoit\_pausader@brown.edu}
\author{X. Wang}
\address{YMSC, Tsinghua University \& BIMSA}
\email{xuecheng@tsinghua.edu.cn}
\author{K.\ Widmayer}
\address{University of Zurich \& University of Vienna}
\email{klaus.widmayer@math.uzh.ch}

\thanks{A.I.\ was supported in part by NSF grant DMS-2007008. B.P.\ was partially supported by a Simons Fellowship, the CY-IAS fellowship program, and NSF grant DMS-2154162. X.W.\  was supported in part by NSFC-12141102, and MOST-2020YFA0713003. K.W.\ gratefully acknowledges support of the SNSF through grant PCEFP2\_203059.}

\begin{abstract}

The goal of this article is twofold. First, we investigate the linearized Vlasov-Poisson system around a family of spatially homogeneous equilibria in $\mathbb{R}^3$ (the unconfined setting). Our analysis follows classical strategies from physics \cite{BiTr,Lan1946,Pe} and their subsequent mathematical extensions \cite{BeMaMoLin,Deg,GlSh,GrNgRo,HKNgRo,MouVil}. The main novelties are a unified treatment of a broad class of analytic equilibria and the study of a class of generalized Poisson equilibria. For the former, this provides a detailed description of the associated Green's functions, including in particular precise dissipation rates (which appear to be new), whereas for the latter we exhibit explicit formulas. 

Second, we review the main result and ideas in our recent work \cite{IoPaWaWiPoisson} on the full global nonlinear asymptotic stability of the Poisson equilibrium in $\R^3$.

\end{abstract}
\maketitle

\setcounter{tocdepth}{1}

\tableofcontents

\section{Introduction}

In this article we consider a hot, unconfined, electrostatic plasma of electrons on a uniform, static, background of ions, in three dimensions. Here collisions are neglected and the associated distribution of electrons is modeled by a function $F=F(x,v,t):\mathbb{R}^3_x\times\mathbb{R}^3_v\times[0,T]\to\R$ satisfying the nondimensionalized Vlasov-Poisson system
\begin{equation}\label{VPO}
\begin{split}
\left(\partial_t+v\cdot\nabla_x\right)F+\nabla_x\phi\cdot\nabla_vF=0,\qquad \Delta_x\phi=\rho=\int_{\mathbb{R}^3}Fdv-1.
\end{split}
\end{equation}
Our goal is to investigate the stability properties of solutions around a {\it spatially homogeneous equilibrium} $M_0=M_0(v)$ normalized such that $\int_{\R^3}M_0(v)\,dv =1$. Looking for solutions of \eqref{VPO} of the form $F=M_0+f$, we obtain the system
\begin{equation}\label{VP}
\begin{split}
&\left(\partial_t+v\cdot\nabla_x\right)f+E\cdot\nabla_vM_0+E\cdot\nabla_vf=0,\\
&E:=\nabla_x\Delta_x^{-1}\rho,\qquad \rho(x,t):=\int_{\mathbb{R}^3}f(x,v,t)dv.
\end{split}
\end{equation}

To recast this as a problem for the density $\rho$, we introduce the ``backwards characteristics'' of the system\eqref{VP}: these are the functions $X,V:\R^3\times\R^3\times\mathcal{I}^2_T\to\R^3$ obtained by solving the ODE system
\begin{equation}\label{Lan1}
\begin{alignedat}{2}
&\partial_sX(x,v,s,t)=V(x,v,s,t),\qquad &X(x,v,t,t)&=x,\\
&\partial_sV(x,v,s,t)=E(X(x,v,s,t),s),\qquad &V(x,v,t,t)&=v,
\end{alignedat}
\end{equation}
where $\mathcal{I}^2_T:=\{(s,t)\in[0,T]^2:\,s\leq t\}$. The main equation \eqref{VP} gives
\begin{equation*}
\begin{split}
\frac{d}{ds}f(X(x,v,s,t),V(x,v,s,t),s)
&=-E(X(x,v,s,t),s)\cdot\nabla_vM_0(V(x,v,s,t)).
\end{split}
\end{equation*}
Integrating over $s\in[0,t]$ we have
\begin{equation}\label{Lan2}
\begin{split}
f(x,v,t)=f_0(X(x,v,0,t),V(x,v,0,t))
-\int_0^tE(X(x,v,s,t),s)\cdot \nabla_v M_0(V(x,v,s,t))\,ds,
\end{split}
\end{equation}
for any $(x,v,t)\in\R^3\times\R^3\times[0,T]$. 



\subsection{Linear analysis}\label{LinearAnalysis}
Linearizing \eqref{Lan2} for small $f$ we obtain
\begin{equation}
\begin{split}
f_{lin}(x,v,t)&:=f_0(x-tv,v)-\nabla_vM_0(v)\cdot\int_{s=0}^t E_{lin}(x-(t-s)v,s)ds,
\end{split}
\end{equation}
which upon integration yields an equation for the spatial density:
\begin{equation}\label{LinVolterra}
\begin{split}
\rho_{lin}(x,t)+\int_{\tau=0}^t\int_{\mathbb{R}^3}(t-\tau)M_0(v)\rho_{lin}(x-(t-\tau)v,\tau)d\tau dv&=\int_{\mathbb{R}^3}f_0(x-tv,v)dv.
\end{split}
\end{equation}
Motivated by this 
we consider the general Volterra equation for $(x,t)\in\R^3\times[0,\infty)$:
\begin{equation}\label{PreVolterra}
\begin{split}
\rho(x,t)+\int_{0}^t\int_{\mathbb{R}^3}(t-\tau)M_0(v)\rho(x-(t-\tau)v,\tau)d\tau dv&=h(x,t).
\end{split}
\end{equation}
This can be easily studied in the Fourier space: taking the Fourier transform in $x$, we get
\begin{equation}\label{Volterra}
\begin{split}
\widehat{\rho}(\xi,t)+\int_{0}^t(t-\tau)\widehat{\rho}(\xi,\tau)\widehat{M}_0((t-\tau)\xi)d\tau=\widehat{h}(\xi,t).
\end{split}
\end{equation}
Following classical references, like \cite{Pe}, \cite[Chapter III.30]{LLX1981} or \cite[Chapter 5.2.4]{BiTr}, we define
\begin{equation}\label{more45}
\begin{split}
&R(\xi,\theta):=\int_{0}^\infty\widehat{\rho}(\xi,t)e^{-it\theta}dt,\\
&K(\xi,\theta):=\int_{0}^\infty t\widehat{M}_0(t\xi)e^{-it\theta}dt,\\
&H(\xi,\theta):=\int_{0}^\infty \widehat{h}(\xi,t)e^{-it\theta}dt,
\end{split}
\end{equation}
for $\theta\in\mathbb{R}$. We obtain from \eqref{Volterra} the key algebraic equation
\begin{equation}\label{AE}
\begin{split}
(1+K(\xi,\theta))R(\xi,\theta)=H(\xi,\theta),
\end{split}
\end{equation}
which can easily be inverted so long as $1+K$ does not vanish (the function $K$ is the Fourier transform of the ``polarization function"). In this case, inverting the Fourier transform, we obtain a solution formula for \eqref{Volterra}. More precisely, if $\rho$ solves the equation \eqref{PreVolterra}, then we can reconstruct $\rho$ from the forcing term $h$ according to the formulas
\begin{equation}\label{SolvingVolterra1}
\widehat{\rho}(\xi,t)=\int_{0}^t \widehat{G}(\xi,t-\tau)\widehat{h}(\xi,\tau)d\tau,\qquad \rho(x,t)=\int_{0}^t\int_{\mathbb{R}^3}G(x-y,t-s)h(y,s)\,dyds,
\end{equation}
where, with $\mathfrak{1}_{[0,\infty)}$ denoting the characteristic function of the interval $[0,\infty)$,
\begin{equation*}
\widehat{G}(\xi,\tau):=\frac{1}{2\pi}\mathfrak{1}_{[0,\infty)}(\tau)\int_{\R}\frac{e^{i\theta\tau}d\theta}{1+K(\xi,\theta)}.
\end{equation*}
These formulas are the key to understanding solutions of the Vlasov-Poisson system. 
Our first main goal in what follows is to get a good description of Green's function $G$.

\subsubsection{Acceptable equilibria and Green's function}

We consider a class of radially symmetric equilibria $M_0(v)$ such that $\int_{\R^3} M_0dv=1$. We define $m_0:\mathbb{R}\to\mathbb{R}$ such that
\begin{equation}\label{m0}
m_0(r)=\int_{\mathbb{R}^2}M_0(r,y)dy,
\end{equation}
where $r\in\R$, $y\in\R^2$, $(r,y)\in\R^3$. The reduced distribution $m_0$ is even, integrable and 
\begin{equation*}
\begin{split}
\widehat{M_0}(\xi)=\widehat{m_0}(\vert \xi\vert).
\end{split}
\end{equation*}
For any $\vartheta\in(0,1/2]$ we define the regions
\begin{equation}\label{more10}
\begin{split}
&\mathcal{D}_{\vartheta}:=\{z\in\C:\,|\Im z|<\vartheta(1+|\Re z|)\},\\
&\Gamma^{+}_\vartheta:=\{z=x+i\vartheta(1+x):\,x\in[0,\infty)\},\qquad \Gamma^{-}_\vartheta:=\{z=x+i\vartheta(1-x):\,x\in(-\infty,0]\}.
\end{split}
\end{equation}

\begin{definition}\label{DefinitionEquilibrium}

We define the class $\mathcal{M}_{\vartheta,d}$, $\vartheta\in(0,\pi/4]$, $d>1$, of ``acceptable equilibria" as the set of radially symmetric functions $M_0:\R^3\to\R$ satisfying $\int_{\R^3} M_0dv=1$, and with the following properties:

(i) The function $m_0$ defined as in \eqref{m0} is even, positive on $\R$, and extends to an analytic function $m_0:\mathcal{D}_{\vartheta}\to\C$ satisfying the identies
\begin{equation}\label{more10.1}
m_0(z)=m_0(-z)=\overline{m_0(\overline{z})}\qquad\text{ for any }z\in\mathcal{D}_{\vartheta}.
\end{equation}

(ii) Moreover, $m'_0(r)<0$ if $r\in(0,\infty)$ and
\begin{equation}\label{DefMaxg}
|m_0(z)|\lesssim (1+|z|)^{-d}\qquad\text{ for any }z\in\mathcal{D}_{\vartheta}.
\end{equation}

Given an acceptable equilibrium, we define its variance by the formula
\begin{equation}\label{a2def}
a_2:=\int_\R t^2m_0(t)\,dt.
\end{equation}
The equilibria we consider in this article are of two types:
\begin{enumerate}
\item ``thin-tail acceptable equilibria'': $d>3$, or $d=3$ and $a_2<\infty$;
\item ``fat-tail acceptable equilibria'': $1<d<3$, or $d=3$ and $a_2=\infty$.
\end{enumerate}
\end{definition}

Typical examples are polynomially decreasing or Gaussian equilibria, which lead to
\begin{equation}\label{m0_examples}
m_0(r)=\frac{c_d}{\left[1+r^2\right]^\frac{d}{2}},\qquad m_0(r)=\pi^{-1/2}e^{-r^2}.
\end{equation}
Notice that, using the Cauchy integral formula, we can upgrade \eqref{DefMaxg} to bounds on derivatives in a smaller region: for any $\delta>0$
\begin{equation}\label{DefMaxg2}
\sup_{z\in \mathcal{D}_{\vartheta-\delta}}(1+|z|)^{d+j}\vert \partial^j_zm_0(z)\vert\lesssim_{\delta,j} 1.
\end{equation}

Our first main result gives a precise description of Green's function associated to such acceptable equilibria.

\begin{theorem}\label{MainThm}

Assume that $d>1$, $M_0\in\mathcal{M}_{\vartheta,d}$ is an acceptable equilibrium in the sense of Definition \ref{DefinitionEquilibrium}, and assume that $r_0>0$ is sufficiently small. As before, we define
\begin{equation}\label{Kernels2}
\begin{split}
&K(\xi,\theta):=\int_{0}^\infty t\widehat{M}_0(t\xi)e^{-it\theta}dt,\\
&\widehat{G}(\xi,\tau):=\delta_0(\tau)-\mathfrak{1}_{[0,\infty)}(\tau)\frac{1}{2\pi}\int_{\R}\frac{K(\xi,\theta)}{1+K(\xi,\theta)}e^{i\theta\tau}\,d\theta.
\end{split}
\end{equation}
Then there exists $\gamma_0=\gamma_0(\vartheta,d,r_0)\in(0,\infty)$ such that the following claims hold:

(i) At high frequency, we have a perturbation of a kinetic density: if $|\xi|>r_0/2$, then we can write $\widehat{G}$ in the form
\begin{equation}\label{GFHF}
\widehat{G}(\xi,\tau)=\delta_0(\tau)+e^{-\gamma_0\tau\vert\xi\vert}\mathcal{E}_h(|\xi|,\tau),
\end{equation}
where, for any $a\in\{0,\ldots,10\}$, $b\in\{0,1\}$, $\tau\geq 0$, and $|r|>r_0/2$,
\begin{equation}\label{GFHF2}
\begin{split}
\tau^br^{a}\vert\partial_\tau^b\partial_r^{a}\mathcal{E}_h(r,\tau)\vert\lesssim r^{-1}.
\end{split}
\end{equation}

(ii) At low frequencies, we have an additional oscillatory component: if $|r|<2r_0$ then we can write $\widehat{G}$ in the form
\begin{equation}\label{GFLF}
\begin{split}
\widehat{G}(\xi,\tau)&=\delta_0(\tau)+\Re\big\{i[1+\mathfrak{m}_l(|\xi|)]e^{i\tau\omega(|\xi|)}\big\}+e^{-\gamma_0\tau\vert\xi\vert}\mathcal{E}_l(|\xi|,\tau)
\end{split}
\end{equation}
where, for any $a\in\{0,\ldots,10\}$, $b\in\{0,1\}$, $\tau\geq 0$, and $r<2r_0$,
\begin{equation}\label{GFLF2}
\begin{split}
r^{a}\vert\partial_r^{a}\mathfrak{m}_l(r)\vert\lesssim r^{d-1}+r^2\log(1/r),\qquad \tau^br^{a}\vert\partial_\tau^b\partial_r^{a}\mathcal{E}_l(r,\tau)\vert\lesssim r^{d-1}+r^2\log (1/r).
\end{split}
\end{equation}
The dispersion relation $\omega_1:=\Re\omega$ and the dissipation coefficient $\omega_2:=\Im\omega\ge0$ satisfy the bounds
\begin{equation}\label{GFLF3}
\begin{split}
&r^a|\partial_r^a(\omega_1(r)-1)|+r^a|\partial_r^a(\omega_2(r))|\lesssim r^{d-1}+r^2\log(1/r),\\
&\omega_2(r)\in\Big[\frac{-\pi m'_0(\omega_1(r)/r)}{4r^2},\frac{-\pi m'_0(\omega_1(r)/r)}{r^2}\Big],
\end{split}
\end{equation} 
for any $a\in\{0,\ldots,10\}$, $\tau\geq 0$, and $r<2r_0$.

In addition, in the thin-tail case ($d\geq 3$ and $a_2<\infty$), the functions $\mathfrak{m}_l$, $\mathcal{E}_l$, $\omega_1$, and $\omega_2$ satisfy the stronger bounds
\begin{equation}\label{GFLF4}
\begin{split}
r^{a}\vert\partial_r^{a}\mathfrak{m}_l(r)\vert\lesssim r^2,\qquad \tau^br^{a}\vert\partial_\tau^b\partial_r^{a}\mathcal{E}_l(r,\tau)\vert\lesssim r^{d-1}+r^3.
\end{split}
\end{equation}
and
\begin{equation}\label{GFLF5}
r^a|\partial_r^a(\omega_1(r)-1-3a_2r^2/2)|+r^a|\partial_r^a(\omega_2(r))|\lesssim r^{d-1}+r^4\log(1/r),
\end{equation} 
for any $a\in\{0,\ldots,10\}$, $b\in\{0,1\}$, $\tau\geq 0$, and $r<2r_0$, where $a_2$ is defined as in \eqref{a2def}.

If $d> 2n+1$ for some integer $n\ge 1$, then $\mathfrak{m}_l$ and $\omega\in C^n([0,\infty))$ and we have an expansion for $\omega_1$:
\begin{equation*}
r^a\vert\partial_r^a(\omega_1(r)-(1+b_2r^2+\dots+b_{2n}r^{2n})\vert+r^a\vert\partial_r^a(\omega_2(r))\vert\lesssim r^{d-1}+r^{2n+2}\log(1/r),
\end{equation*}
where $b_n$ only depends on moments of $m_0$ of order less than $n$. If $d=2n+3$ and the $2n+2$-th moment of $m_0$ is finite, one can remove the $\log$-loss.

\end{theorem}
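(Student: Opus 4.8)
The plan is to reduce everything to a single analytic function of one complex variable, carry out the Landau analytic continuation, locate the zeros of the dispersion function, and then extract $\widehat G$ by contour deformation, treating the high- and low-frequency regimes separately.

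\emph{Reduction and analytic continuation.} First I would rewrite $K$ in closed form. Using that $M_0$ is radial, for $\Im\theta<0$ one has $\widehat M_0(t\xi)=\widehat{m_0}(t|\xi|)$, and after computing the elementary Laplace integral in $t$ and integrating by parts in the velocity variable,
\begin{equation*}
K(\xi,\theta)=-\frac{1}{|\xi|^2}\int_{\R}\frac{m_0'(u)}{u+\theta/|\xi|}\,du=:-\frac{1}{|\xi|^2}\,\mathcal{J}(\theta/|\xi|),
\end{equation*}
so the whole problem depends on $\xi$ only through $r=|\xi|$ and the single function $\mathcal{J}$. Using the analyticity of $m_0$ on $\mathcal D_\vartheta$ together with the decay bounds \eqref{DefMaxg}, \eqref{DefMaxg2}, I would deform the $u$-contour into the lower half-plane to continue $\mathcal{J}$, and hence $\theta\mapsto K(\xi,\theta)$, analytically across the real axis into a region of size comparable to $\vartheta r(1+|\theta|/r)$ in the upper half $\theta$-plane, recording the Plemelj/residue identity that exhibits the Landau term proportional to $m_0'(-\theta/r)$. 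In parallel I would expand $\mathcal{J}(\phi)=\phi^{-2}+3a_2\phi^{-4}+\dots$ as $|\phi|\to\infty$ by repeated integration by parts; the number of legitimate terms is governed by which moments of $m_0$ are finite, and this is exactly where the thin-tail/fat-tail dichotomy of Definition~\ref{DefinitionEquilibrium} enters and where the $\log$-losses in \eqref{GFLF2}--\eqref{GFLF5} come from (the borderline moment).

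\emph{Penrose condition and the Landau pole.} Next I would verify that $1+K(\xi,\theta)\neq0$ for every $\xi\neq0$ and every real $\theta$: on $\R\setminus\{0\}$ the monotonicity $m_0'<0$ on $(0,\infty)$ forces $\Im K(\xi,\theta)=-r^{-2}\Im\mathcal{J}(\theta/r)\neq0$ (the Landau term $\propto m_0'(-\theta/r)$ does not vanish), while $1+K(\xi,0)=1+r^{-2}\int_{\R}(-m_0'(u)/u)\,du>1$. For $r\ge r_0/2$, together with the decay of $K$ as $\Re\theta\to\pm\infty$, this upgrades to a uniform lower bound $|1+K|\gtrsim1$ on a strip $\{0\le\Im\theta\le\gamma_0 r\}$ after shrinking $\gamma_0$. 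For $r<2r_0$ the gap instead degenerates as $r\to0$: using the asymptotic expansion of $\mathcal{J}$, the equation $\mathcal{J}(\phi)=r^2$ has, by Newton iteration, a unique conjugate-symmetric pair of roots $\phi_*(r),-\overline{\phi_*(r)}$ with $\Re\phi_*\approx1/r$ and positive imaginary part much smaller than $\Re\phi_*$, driven by the Landau residue term (exponentially small in the thin-tail case, polynomially small in the fat-tail case). Setting $\omega(r):=r\phi_*(r)$ gives $\omega_1=\Re\omega$ with the claimed expansions, and balancing $\Re\mathcal{J}'(\Re\phi_*)\approx-2r^3$ against the residue term yields $\omega_2=\Im\omega$ in the asserted interval. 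One also has to show that these are the only zeros of $1+K$ in the relevant region.

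\emph{Contour deformation and extraction of $\widehat G$.} For $|\xi|>r_0/2$ I would shift the contour in $\widehat G(\xi,\tau)-\delta_0(\tau)=-\tfrac{1}{2\pi}\int_{\R}\tfrac{K}{1+K}e^{i\theta\tau}\,d\theta$ up to $\Im\theta=\gamma_0 r$, legitimate by the analyticity and the absence of zeros there, gaining the factor $e^{-\gamma_0 r\tau}$; the remaining integral over the shifted line, together with its $\partial_\tau$-derivative (traded by one integration by parts) and $\partial_r$-derivatives, is bounded by $\int_{\R}|K(\xi,\theta)|\,d\theta\lesssim r^{-1}$ via the scaling $K=-r^{-2}\mathcal{J}(\theta/r)$, giving \eqref{GFHF}--\eqref{GFHF2}. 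For $|\xi|<2r_0$ the analogous shift must pass the Landau poles $\omega(r)$ and $-\overline{\omega(r)}$; the two residues combine, by the symmetry $K(\xi,\theta)=\overline{K(\xi,-\bar\theta)}$, into $2\Re\{ic_*e^{i\omega\tau}\}$ with $c_*=-1/\partial_\theta K(\xi,\omega)$, which one identifies with $\Re\{i[1+\mathfrak m_l(r)]e^{i\tau\omega}\}$ upon setting $1+\mathfrak m_l=2/\partial_\theta K(\xi,\omega)$; the bounds on $\mathfrak m_l$ and on the leftover integral $\mathcal{E}_l$ then follow from the asymptotics of $\partial_\theta K$ near the pole and from the decay of $K$, yielding \eqref{GFLF2}--\eqref{GFLF5}, with the sharper thin-tail bounds coming from the extra legitimate terms in the expansion of $\mathcal{J}$. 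The main obstacle, to my mind, is precisely the $r\to0$ analysis: the spectral gap closes at a power rate in $r$, the Landau pole must be located and shown unique with enough precision to pin down the dissipation rate $\omega_2$ (the genuinely new point of the statement), and the remainder $\mathcal{E}_l$ must be estimated sharply—carefully exploiting the cancellation between $K/(1+K)$ and the extracted simple pole near $\theta\approx\pm\omega_1$—to get the stated $r^{d-1}+r^2\log(1/r)$ gain rather than an $O(1)$ bound. The rest is bookkeeping of the moment expansions.
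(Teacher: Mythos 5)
Your proposal is correct and follows essentially the same route as the paper: the reduction $K(\xi,\theta)=-|\xi|^{-2}\mathbf{k}(\theta/|\xi|)$ with analytic continuation across the real axis via Plemelj, the Penrose-type nonvanishing on the real line, the location of the conjugate pair of Landau poles with $\omega_2$ obtained by balancing $\partial_z\mathbf{k}^{eff}(\zeta_1)\approx -2r^3$ against $\pi m_0'(\zeta_1)$ (this is exactly Lemma \ref{LemContourSmallXi}), and the contour shift into the upper half-plane whose residues produce the oscillatory term with $1+\mathfrak{m}_l=-2r\mathbf{k}(\zeta)/\mathbf{k}'(\zeta)$. The only differences of implementation are that the paper establishes uniqueness of the pole pair by Rouch\'e in the trapezoid $\mathcal{T}_r$ (rather than Newton iteration alone, a step you flag but do not carry out), and it realizes the cancellation you anticipate for the sharp bound on $\mathcal{E}_l$ by subtracting the free resolvent $\theta^{-2}/(r^2-\theta^{-2})$, whose contour integral vanishes identically, as in \eqref{more31.5}.
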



We call the real part $\Re(\omega)$ of $\omega$ the \emph{dispersion relation}, and the imaginary part $\Im(\omega)$ the \emph{dissipation coefficient}. This is a departure from the classical physics literature which would call $\omega$ the dispersion relation; however, the real part $\Re(\omega)$ induces a dispersive dynamics akin to the dynamics of a Schr\"{o}dinger equation, while $\Im(\omega)$ induces a ``dissipation'' dynamics similar to the effect of the heat equation.

We conclude this subsection with a few remarks:
\begin{enumerate}[wide,labelwidth=!, labelindent=0pt,itemsep=1pt]
\item \emph{Analytic equilibria and contour analysis.}
Our proof makes use of the analyticity of the equilibria studied via complex analysis methods such as contour integration, an approach already pursued by Landau \cite{Lan1946} and by now standard in the physics literature \cite{BiTr,LLX1981,Pe} and more recent mathematical treatments \cite{BeMaMoLin,Deg,GlSh,HKNgRo,MouVil}. In particular, the recent papers \cite{BeMaMoLin,HKNgRo} have investigated certain classes of ``thin-tail'' equilibria, and give linear decay estimates for the associated density and electric field, for sufficiently nice initial particle distributions $f_0$. For acceptable equilibria, our analysis unifies, sharpens and extends these results; most importantly we provide sharp dissipation rates, which are expected to play a key role in the problem of nonlinear stability.\footnote{We note however that \cite{HKNgRo} also considers different classes of equilibria not covered by Theorem \ref{MainThm}, such as nonradial equilibria. We also note the very recent preprint \cite{Ngu2}, which appeared after this work was submitted, in which the author investigates several issues related to the linear stability of a class of radial equilibria.}

In the physically important setting of Maxwellians $m_0(r)=\pi^{-1/2}e^{-r^2}$ (studied in \cite{BeMaMoLin}), Theorem \ref{MainThm} confirms the precise, Gaussian nature of the dissipation coefficient, as can be seen directly from \eqref{GFLF3}, a folklore statement in the physics literature (see \cite[Section 5.3.6]{Bel2006} or \cite[Remark 8]{BeMaMoLin}), whose mathematical justification appears to be new.
\smallskip

\item \emph{Generalized Poisson equilibria and special cases.}
The case of $d=2$ in \eqref{m0_examples} corresponds to the Poisson kernel, for which a simple, explicit expression of Green's function can be found, as explained in Section \ref{RegularPoisson}. In this case, the dispersion relation is simply given as $\omega_1(\xi)=1$, and thus gives rise to a purely oscillatory dynamic. 

This case is part of a larger family arising as powers of the Poisson kernel (the ``generalized Poisson equilibria'') with polynomial decay of order $2j$, $j\in\mathbb{N}$, for which the formulas are still relatively explicit -- see Section \ref{GeneralPoisson}.
\smallskip
 
\item \emph{Dissipation and dispersion.}
At low frequencies, we see that the second term in \eqref{GFLF} exhibits both a damping effect (due to dissipation from $\Im\omega\ge0$) and a dispersive effect (due to the oscillatory phase $\Re\omega$ which has similar properties to Schr\"{o}dinger operators). The damping appears especially remarkable for a reversible system such as Vlasov-Poisson, and was already highlighted by Landau \cite{Lan1946,LLX1981}. 


The function $\omega$ is defined canonically by solving the equation
\begin{equation*}
\mathbf{k}(\omega(r)/r)=r^2,
\end{equation*}
by a fixed-point argument, where $r\ll 1$ and $\omega(r)=1+O(r)$. Here $\mathbf{k}$ is an analytic function that depends only on the equilibrium $m_0$. See Lemma \ref{LemContourSmallXi} for the precise construction. The function $\mathbf{k}$ can be expanded as a power series of $z^{-2}$ at infinity, 
\begin{equation*}
\mathbf{k}(z)=z^{-2}+3a_2z^{-4}+\ldots+ O(|z|^{-d-1}),
\end{equation*}
by continuing the expansion proved in Lemma \ref{LemG}, where the coefficients in the expansion only depend on moments of $m_0$ up to order $d-1$. Similarly, $\omega_2$ can be computed to any order by iterating the expansion in \eqref{more19.5}.
\smallskip

\item \emph{The role of decay of $M_0$.} 
Depending on the localization of the equilibrium $M_0$, the respective importance of dissipation and dispersion vary: for the slowly decaying Poisson equilibrium, we have a fast dissipation and no dispersion at all (only oscillations: $\omega(\xi)\equiv 1+i\vert\xi\vert$, see \eqref{GP}), while for compactly supported equilibria, there is no dissipation and one relies fully on the dispersion. Notice that faster decay of $m_0$ leads to slower dissipation. In particular, if $m_0$ is compactly supported, there is no dissipation at small frequency, a phenomenon that was already highlighted in \cite{GlSh,GlSh2}.
\end{enumerate}

\subsection{Various mechanisms of stability}\label{StabMechs}
In the linearized problem, as can be seen from \eqref{LinVolterra} one typically considers functions $h$ of the form
\begin{equation}\label{more44}
\begin{split}
h(x,t)=\int f_0(x-tv,v)dv,\qquad \Vert h(x,t)\Vert_{L^1}+\langle t\rangle^{3}\Vert h(x,t)\Vert_{L^\infty}\lesssim \Vert f_0\Vert_{L^1\cap L^\infty_x(L^\infty_v)}.
\end{split}
\end{equation}
Via the analysis of Green's function as in Theorem \ref{MainThm}, this leads to decay of $\rho$ and $E=\nabla\Delta^{-1}\rho$, and such decay properties play a vital role for a nonlinear stability analysis. Decay of $\rho$ is often referred to as ``Landau damping'' in the mathematics community, a term that seems to have evolved to cover any of the decay mechanisms involved -- dispersive, dissipative or kinetic. In order to provide some perspective towards the question of nonlinear stability, we give here a brief overview and comparison of some related settings and their associated decay mechanisms.

For this, let us examine the key formula \eqref{SolvingVolterra1} in combination with the information we have about Green's function according to Theorem \ref{MainThm}, and compare with the linearization around vacuum, where $\widehat{G}_{vac}(\xi,\tau)=\delta_0(\tau)$. 

For high frequencies, the setting of homogeneous equilibria (as studied in Section \ref{LinearAnalysis}) can be regarded as a perturbation of the vacuum case: from \eqref{GFHF} we see immediately that the inverse derivative in the electric field is inconsequential, and obtain that
 \begin{equation}\label{NicePropertiesEF}
 \begin{split}
 \Vert P_{high}E(t)\Vert_{L^1}+\langle t\rangle^3\Vert P_{high}E(t)\Vert_{L^\infty_x}\lesssim 1.
 \end{split}
 \end{equation}

However, at low frequencies, the situation is very different:
\begin{enumerate}
\item In the vacuum case, the above formula gives $\Vert E_{vac}\Vert_{L^\infty}=O(t^{-2})$ and $\Vert E_{vac}(t)\Vert_{L^2}=O(t^{-1/2})$. As a consequence, the nonlinear characteristics (which, as seen in \eqref{ReproducingChar}, roughly behave like $\int_{0}^t \tau E(\tau)d\tau$) exhibit a \emph{long range modification to scattering}, as was recently justified in \cite{FlOuPaWi,IoPaWaWiVacuum}.

\item In contrast, in the case of homogeneous equilibria, we can re-express the main terms in \eqref{GFLF} to get
\begin{equation}\label{NFG}
\chi_{\le r}(\xi)\widehat{G}(\xi,\tau)=\frac{d}{d\tau}\Re\left(\chi_{\le r}(\xi)e^{i\tau\omega}\mathfrak{1}_{\{\tau>0\}}\right)+l.o.t.
\end{equation}
Crucially, this allows to integrate by parts in $\tau$ in the solution formula \eqref{SolvingVolterra1} and shows that the main contribution in $\rho$ will be oscillatory. In particular, this implies that integral quantities of $E$ will be smaller than expected. As a consequence, in the nonlinear analysis one does \emph{not} expect long-range modifications to the characteristics, but rather \emph{linear scattering} for the particle distribution function. So far, this has been rigorously justified only in the simplest case of the Poisson equilibrium \cite{IoPaWaWiPoisson}.
\end{enumerate}

In practice, integration by parts in time (the method of \emph{normal forms}) leads to small denominators, and one needs much more subtle analysis to pass from linear to nonlinear results -- see Section \ref{NonlinStab} for an outline of this approach in the setting of the Poisson equilibrium. 

Finally we mention further settings which lie outside the scope of this article, but in which important progress on stable dynamics has been recently made:
\begin{enumerate}[wide,itemsep=1pt]

\item \emph{Screened interactions.} 
In some variations of the Vlasov-Poisson system associated to the ion dynamics, one considers \textit{screened} interactions. In this case, the low-frequencies are screened and a strong estimate like \eqref{NicePropertiesEF} holds globally. This gives favorable estimates and also leads to linear scattering \cite{BeMaMoScreened,HKNgRoScreened}, even in lower dimensions \cite{HuNgXu}.
\smallskip

\item \emph{The confined case $x\in\mathbb{T}^n$.}
The spatially periodic setting models a \emph{confined} plasma, and has been studied extensively. Under a suitable stability assumption on $M_0$, a so-called ``Penrose criterion'' ensuring that $1+K(\xi,\theta)\geq c>0$ holds uniformly, the linearizations around large classes of homogeneous equilibria can be treated as perturbations of free transport $\partial_t f+v\cdot\nabla_x f=0$. Exploiting the inherent phase mixing mechanism, one can show that regularity of $f_0$ leads to rapid decay of the density $\rho$, and ultimately again to linear scattering (see e.g.\ \cite{BMM2016,GrNgRo,MouVil}). In contrast to the setting on full space, here the dimension $n$ does not play a distinguished role. 

We highlight moreover that (as is well known \cite{BeMaMoLin,GlSh,GlSh2,HKNgRo}), such lower bounds on $1+K$ cannot hold on $\R^3$.
\smallskip

\item \emph{The case of localized equilibria in $\R^3$.}
In the context of galactic dynamics, spatially localized equilibria (with attractive interactions) play an important role. Classical physics conjectures and observations (see e.g.\ \cite{BiTr}) concern the dynamic behavior of perturbations of classes of such equilibria, which seem to exhibit oscillations. First steps towards a quantitative analysis of such linearized dynamics have been achieved in the recent works \cite{HRS2021,HRSS2023}.

At an extreme of spatial localization, the dynamics of Vlasov-Poisson near a (repulsive) point mass have been investigated recently in \cite{PW2020,PWY2022}, and reveal a decay rate as in the vacuum case, leading to a long range modification to the underlying characteristics.
\smallskip

\item \emph{Vlasov-HMF models.} One can also consider model problems, where the nonlinear interactions are simplified such as the {\it Vlasov-Hamiltonian Mean-Field} model \cite{BaBoDaRuYa}. In this case, one can often obtain simpler proofs of otherwise difficult results \cite{FaRo} and study (linearized) stability of inhomogeneous equilibria \cite{BaOlYa,FaHoRo}.

\end{enumerate}



\subsection{Nonlinear stability}\label{NonlinStab}
We consider now the full nonlinear asymptotic stability problem. In view of \eqref{Lan2}, this is given as
\begin{equation}\label{Lan4}
\rho(x,t)+\int_0^t\int_{\R^3}(t-s)\rho(x-(t-s)v,s)M_0(v)\,dvds=\mathcal{N}(x,t),
\end{equation}
for any $(x,t)\in\R^3\times[0,T]$, where
\begin{equation}\label{Lan5}
\begin{split}
\mathcal{N}(x,t)&:=\mathcal{N}_1(x,t)+\mathcal{N}_2(x,t),\\
\mathcal{N}_1(x,t)&:=\int_{\R^3}f_0(X(x,v,0,t),V(x,v,0,t))\,dv,\\
\mathcal{N}_2(x,t)&:=\int_0^t\int_{\R^3}\big\{E(x-(t-s)v,s)\cdot \nabla_vM_0(v)\\
&\qquad\qquad-E(X(x,v,s,t),s)\cdot (\nabla_vM_0)(V(x,v,s,t))\big\}\,dvds.
\end{split}
\end{equation}
The characteristic equations \eqref{Lan1} yield the reproducing formulas
\begin{equation}\label{ReproducingChar}
\begin{split}
X(x,v,s,t)&=x-(t-s)v+\int_s^t(\tau-s)E(X(x,v,\tau,t),\tau)\,d\tau,\\
V(x,v,s,t)&=v-\int_{s}^t E(X(x,v,\tau,t),\tau)\,d\tau.
\end{split}
\end{equation}
Since $E=\nabla_x\Delta_x^{-1}\rho$ can be recovered directly from the density $\rho$,  the equations \eqref{Lan4}--\eqref{ReproducingChar} yield a closed system for the density $\rho$ and the characteristic functions $X$ and $V$. The full distribution density $f$ can then be obtained from the formula \eqref{Lan2}.

To resolve the question of nonlinear stability of homogeneous equilibria, a precise understanding of the linear dynamics needs to be combined with methods that allow to control nonlinear interactions. In particular, the comparatively slow decay (essentially limited by the dimension of the ambient space $\R^3$) and the presence of small denominators and resonances is a serious challenge. 

The only result available to date concerns a particular ``fat-tail'' equilibrium falling in the framework of Theorem \ref{MainThm}, namely the Poisson equilibrium
\begin{equation}\label{NVP.2}
M_1(v):=\frac{C_1}{(1+|v|^2)^{2}},\qquad \widehat{M_1}(\xi)=e^{-|\xi|},
\end{equation}
for a suitable normalization constant $C_1>0$. Based on our analysis of the linearized dynamics near $M_1$ (see Sections \ref{LinearAnalysis}), we find two explicit, different dynamics in the associated electric field (see \eqref{eq:E-decomp}, or Section \ref{RegularPoisson} and Lemma \ref{SolvingVolterraP} below): an ``oscillatory'' component, which decays at almost the critical rate $\ip{t}^{-2}$ and oscillates as $e^{-it}$ in time, as well as a ``static'' part, which decays faster than the critical rate. 

For smooth, localized perturbations of $M_1$, this leads to global solutions that scatter to linear solutions:
 \begin{theorem}[\protect{\cite[Theorem 1.1]{IoPaWaWiPoisson}}]\label{thm:main_simple}
 There exists $\bar\varep>0$ such that if the initial particle distribution $f_0$ satisfies
 \begin{equation}\label{eq:init}
  \sum_{\abs{\alpha}+\abs{\beta}\leq 1}\norm{\ip{v}^{4.5}\partial_x^\alpha\partial_v^\beta f_0(x,v)}_{L^\infty_x L^\infty_v}+\norm{\ip{v}^{4.5}\partial_x^\alpha\partial_v^\beta f_0(x,v)}_{L^1_x L^\infty_v}\leq\varep_0\leq \bar\varep,
 \end{equation}
 then the Vlasov-Poisson system \eqref{VP} with $M_1$ defined as in \eqref{NVP.2} has a global unique solution $f\in C^1_{x,v,t}(\mathbb{R}^{3+3}\times\mathbb{R}_+)$ that scatters linearly, i.e.\ there exists $f_\infty\in L^\infty_{x,v}$ such that
 \begin{equation}\label{eq:lin_scatter}
\begin{split}
\Vert f(x,v,t)-f_\infty(x-tv,v)\Vert_{L^\infty_{x,v}}\lesssim\varepsilon_0\langle t\rangle^{-1/2}.
\end{split}
\end{equation}
Moreover, the electric field decomposes into a ``static'' and an ``oscillatory'' component with different decay rates
 \begin{equation}\label{eq:E-decomp}
\begin{split}
&E(t)=E^{stat}(t)+\Re(e^{-it}E^{osc}(t)),\\
&\ip{t}\norm{E^{stat}(t)}_{L^\infty}+\norm{E^{osc}(t)}_{L^\infty}\lesssim \varep_0\ip{t}^{-2+\delta},
\end{split}
 \end{equation}
where $\delta\in(0,1/100]$ is a small parameter.
\end{theorem}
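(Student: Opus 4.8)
The plan is to run a bootstrap (continuity) argument on the closed system \eqref{Lan4}--\eqref{ReproducingChar} for the density $\rho$ together with the characteristics $X,V$, using as the linear engine the explicit Green's function for $M_1$: by Section \ref{RegularPoisson} one has $\omega(\xi)\equiv 1+i|\xi|$, so the low-frequency part of $\widehat G$ is purely oscillatory with a clean exponential damping $e^{-\tau|\xi|}$, while the high-frequency part is a rapidly decaying perturbation of $\delta_0(\tau)$ as in Theorem \ref{MainThm}. I would fix a norm $\mathcal{X}_T$ for $\rho$ recording, on each Littlewood--Paley piece $P_k\rho$, the decay rates that appear in \eqref{eq:E-decomp}: for the ``static'' low-frequency part a rate $\langle t\rangle^{-1}$ in $L^\infty_x$ together with $L^1_x$ boundedness, for the ``oscillatory'' part (the coefficient of $e^{-it}$) a rate $\langle t\rangle^{-2+\delta}$, and at high frequency the vacuum-type rate $\langle t\rangle^{-3}$ from \eqref{NicePropertiesEF}. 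Alongside I would carry weighted $L^\infty_{x,v}$ and $L^1_xL^\infty_v$ control on $f$ and its first derivatives, and $C^1$ control on the deviation of $(X,V)$ from free transport. The initial smallness \eqref{eq:init}, with the weight $\langle v\rangle^{4.5}$, then feeds the linear forcing bound \eqref{more44} for $\mathcal{N}_1$ and its derivatives.

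The first step is to invert \eqref{Lan4} via \eqref{SolvingVolterra1}, splitting $\widehat G=\delta_0(\tau)+\widehat G_{high}+\widehat G_{low}$ according to Theorem \ref{MainThm}. The $\delta_0$ and high-frequency pieces are handled as perturbations of the vacuum dynamics: they contribute $\mathcal{N}$ itself plus a rapidly decaying convolution in time, so $E$ inherits \eqref{NicePropertiesEF} at high frequencies and only the linear-in-$\mathcal{N}$ estimates are needed there. The crux is the low-frequency piece, where I would use the normal-form identity \eqref{NFG}, namely that $\chi_{\le r}(\xi)\widehat G(\xi,\tau)$ equals $\frac{d}{d\tau}\Re\big(\chi_{\le r}(\xi)e^{i\tau\omega}\mathfrak{1}_{\{\tau>0\}}\big)$ plus lower-order terms, to integrate by parts in $\tau$ in \eqref{SolvingVolterra1}. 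This transfers one $\tau$-derivative onto $\widehat{\mathcal{N}}$ --- effectively onto the characteristics --- and exposes the oscillation $e^{i\tau}$; the boundary terms produce the ``oscillatory'' and ``static'' splitting of $E$ in \eqref{eq:E-decomp}.

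Next I would expand the genuinely nonlinear term $\mathcal{N}_2$ in \eqref{Lan5} using the reproducing formulas \eqref{ReproducingChar}: writing $X(x,v,s,t)=x-(t-s)v+\int_s^t(\tau-s)E(X,\tau)\,d\tau$ and $V(x,v,s,t)=v-\int_s^tE(X,\tau)\,d\tau$ and Taylor-expanding $E(X,\cdot)\cdot(\nabla_vM_0)(V)$ around the free flow yields a multilinear series in $E$ (equivalently in $\rho$), whose leading bilinear term pairs $E$ with the linearized characteristic deviation. Because the relevant time integrals $\int_s^t(\tau-s)E(\cdot,\tau)\,d\tau$ and $\int_s^tE(\cdot,\tau)\,d\tau$ are bounded --- rather than growing, as in the vacuum case where $\int_0^t\tau E\,d\tau$ forces a long-range modification --- one closes the $C^1$ estimates on $(X,V)$ and the series converges for $\varepsilon_0$ small. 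Feeding the resulting control of $\mathcal{N}$ back through \eqref{SolvingVolterra1} recovers the claimed decay of $\rho$ and $E$, and linear scattering of $f$ is then read off from \eqref{Lan2}: one shows that $f_0(X(x,v,0,t),V(x,v,0,t))$ converges in $L^\infty_{x,v}$ as $t\to\infty$ and that $\int_0^tE(X,s)\cdot(\nabla_vM_0)(V)\,ds$ converges, the $\langle t\rangle^{-1/2}$ rate in \eqref{eq:lin_scatter} reflecting the $L^2\to L^\infty$-type loss at low frequency.

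The main obstacle is the small-denominator/resonance problem created by the normal-form procedure. Integrating by parts in time against $e^{i\tau\omega(\xi)}$ once is harmless, but $\mathcal{N}_2$ is built from $E$, hence from $\rho$, hence --- after one inversion --- from terms already carrying the oscillation $e^{-is}$; iterating the normal form then forces division by phases of the form $1+K(\xi,\theta)$, or, once the oscillation is extracted, by combinations of the type $\omega(\xi)+\omega(\eta)-\omega(\xi+\eta)$, which degenerate as the frequencies tend to $0$. Handling these requires both the quantitative non-vanishing and regularity of $1+K$ off the real axis --- precisely the detailed description of Green's function in Theorem \ref{MainThm}, in particular the exponential damping $e^{-\gamma_0\tau|\xi|}$ and the derivative bounds \eqref{GFLF2} --- and a careful accounting of how many $\langle v\rangle$-weights and $v$-derivatives each normal-form iteration consumes, which is exactly why \eqref{eq:init} is posed with the weight $\langle v\rangle^{4.5}$ and one derivative. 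A secondary difficulty is that the low-frequency decay $\langle t\rangle^{-2+\delta}$ of $E$ is only barely integrable after one power of $t$, so the bounds on $\int_s^t(\tau-s)E\,d\tau$ are borderline and must exploit the oscillation $e^{-i\tau}$ (a non-stationary-phase gain) rather than absolute values; this is the source of the $\delta$-loss and of the bulk of the technical work.
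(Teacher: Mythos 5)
Your overall skeleton --- the explicit Green's function for the Poisson equilibrium, a static/oscillatory splitting of $\rho$ and $E$, a bootstrap on Littlewood--Paley--localized norms together with $C^1$ control of the deviation of the characteristics from free transport, and a normal form (integration by parts in $\tau$ against the oscillation) to handle the borderline time integrals --- matches the architecture of the actual proof. The genuine gap is in how you propose to deal with the small denominators that the normal form creates. After integrating by parts in $s$ in the Duhamel/Volterra formula, the dangerous denominator is not of the three-wave type $\omega(\xi)+\omega(\eta)-\omega(\xi+\eta)$, nor is it controlled by ``quantitative non-vanishing and regularity of $1+K$ off the real axis'' (that only governs the linear inversion, which for $M_1$ is already encoded in the explicit kernel $e^{-(t-\tau)|\xi|}\sin(t-\tau)$). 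It is the wave--particle (Landau) denominator $1-\ip{v,\xi}-i|\xi|$ appearing in \eqref{FormulaRIIP}: the plasma oscillation $e^{is}$ beats against the kinetic transport phase $e^{-is\ip{v,\xi}}$, and this denominator genuinely vanishes in the limit $|\xi|\to 0$, $\ip{v,\xi}\to 1$, i.e.\ on a large-velocity resonant set $|v|\gtrsim |\xi|^{-1}$ that is only polynomially suppressed by the weight $\ip{v}^{4.5}$. No regularity of $1+K$ and no bookkeeping of $v$-weights per ``normal-form iteration'' removes this vanishing, so the step of your plan where the normal form is iterated and one divides by these phases would fail near the resonance.

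The resolution in the paper, absent from your proposal, is a dichotomy between two exact representations of the density (Lemma \ref{SolvingVolterraP}): the direct one \eqref{FormulaRIP}, which has no denominator but no gain, and the normal-form one \eqref{FormulaRIIP}, which gains $\partial_s h$ and a favorable low-frequency factor but carries the denominator $1-\ip{v,\xi}-i|\xi|$. One uses representation II only where this denominator is bounded below and representation I otherwise, implemented through the dyadic decomposition in frequency $k$, velocity $j$ and time $m$ and the sets $A^{I}$, $A^{II}$ in \eqref{sug30}, which then drives the decomposition $\rho=\rho^{stat}+\Re\{e^{-it}\rho^{osc}\}$ of Lemma \ref{rhodeco} and the bootstrap of Proposition \ref{MainBootstrapProp}. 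Relatedly, the reaction term is not estimated by summing a Taylor/multilinear series in $E$ as you suggest, but by direct trilinear estimates on compositions with the nonlinear characteristics (Lemma \ref{keybilinearlemma}), in which the static part of $E$ is used through its strong decay \eqref{Laga11} and the oscillatory part through one more integration by parts in time. Without the representation-switching device (or an equivalent substitute) your scheme cannot close at the Landau resonance, which is precisely the heart of the nonlinear problem.
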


This theorem was proved by the authors in \cite{IoPaWaWiPoisson} and appears to be the first nonlinear asymptotic stability result for the Vlasov-Poisson system in $\R^3$ in a neighborhood of a smooth non-trivial homogeneous equilibrium. We will review some of the main steps of the proof in Section \ref{outlineN}.

We close this section with some remarks for broader context.
\begin{enumerate}[wide,labelwidth=!, labelindent=0pt,itemsep=1pt]

\item \emph{Global existence.} Global existence for suitable perturbations of \eqref{VPO} is classical \cite{BD1985,Pfa1992,LP1991}, and extends to more complicated models \cite{WangRVP,WangVM}, but the precise asymptotic behavior of solutions is a delicate question which is our focus here. We refer to \cite{Be} for a survey of recent results, and we will discuss in more details the advances most relevant to our analysis.
\smallskip

\item \emph{Decay and stability.}
Working with the Poisson homogeneous equilibrium $M_1$ provides some convenient simplifications, as it leads to explicit formulas such as \eqref{GP}. However, as already discussed in Section \ref{StabMechs}, we expect that the conclusion of linear scattering as in the theorem and important aspects of the underlying analysis extend to more general smooth homogeneous acceptable equilibria, as in Definition \ref{DefinitionEquilibrium}.

The statement of the theorem, in particular the crucial decay estimates \eqref{eq:E-decomp}, depend on the fact that we work in dimension $n=3$. The decay is faster in higher dimensions $n\geq 4$ (making the result easier to obtain), but weaker and insufficient in dimension $n=2$. This is mainly due to dimension-dependent dispersive estimates like \eqref{more44}, and is in sharp contrast with the periodic case $x\in\T^n$.
\smallskip

\item \emph{Nonlinear stability results on $\R^3$.}
Except for Theorem \ref{thm:main_simple}, to the best of our knowledge, the nonlinear stability of the Vlasov-Poisson system is only understood near one of two scenarios: vacuum \cite{CK2016,FlOuPaWi,IoPaWaWiVacuum,Pan2020,Wang2018} and a repulsive point charge \cite{PW2020,PWY2022}. In both cases, the comparatively slow decay of the electric field leads to convergence along logarithmically modified linearized characteristics, a phenomenon known as \emph{modified scattering}. A similar effect has recently also been identified near vacuum in the Vlasov-Maxwell system \cite{Big2022}.
\smallskip

\item \emph{Nonlinear Landau damping on $\T^n$, and screened interactions.}
Nonlinear stability of Penrose stable homogeneous equilibria on $\T^n$ was proven in the pioneering work \cite{MouVil} (see also \cite{BMM2016,GrNgRo} for refinements and simplifications), after earlier works \cite{CM1998,HV2009}. While the linear analysis is comparatively simple in this setting, the crucial challenge for nonlinear stability are so-called \emph{nonlinear echoes}. Thanks to the fast decay due to phase mixing, these can be overcome for sufficiently smooth (Gevrey) perturbations. This allows for exponential decay of the electric field, and global solutions that scatter linearly. In this context, a high level of smoothness seems necessary, as some Sobolev perturbations can lead to other stationary solutions \cite{BGK1957,LZ2011}. Recently, such a nonlinear stability result has also been obtained for the ion equation \cite{GI2023}. We note that related mechanisms are involved in $2d$ \emph{fluid mixing}, e.g.\ near monotone shear flows \cite{BM15,Ionescu2018,IJ20,MZ20} or point vortices \cite{IJ2022}.

Such an analysis further extends to the screened case on $\R^3$ as investigated e.g.\ in \cite{BeMaMoScreened,HKNgRoScreened,HuNgXu}, where analogues of the Penrose condition still hold. Since there are no resonances in such settings, one can prove sufficiently rapid decay of the electric field for smooth perturbations.

\item \emph{Connection with Euler-Poisson models.}
In the related case of \emph{warm plasmas}, one uses two-fluid instead of kinetic models. The counterpart to \eqref{VPO}, corresponding to the electron dynamics, has similar dispersion relation as $\omega_1$ in \eqref{GFLF5} (the so-called ``Langmuir or Bohm-Gross'' waves), and was shown to be asymptotically stable (for irrotational data) in \cite{Guo1998,GeMaPa}. The work \cite{BeMaMoLin} emphasized a link between the kinetic and fluid model in the linearized equation whose precise  understanding and relevance for the nonlinear problem is an exciting challenge.

The counterpart to Vlasov-Poisson with screened interactions, the ion equation, is more involved and was obtained more recently in \cite{GP2011}, while the full two-fluid models involving both electrons and ions lead to new and interesting phenomena \cite{GIP2016,GM2014,IoPau1} whose kinetic counterpart is not yet well-understood.
\end{enumerate}

%
%

\subsection{Organization} The rest of this paper is organized as follows. In Section 2 we discuss the linear theory: we provide a complete proof of Theorem \ref{MainThm} and calculate explicitly the Green's function associated to a family of acceptable equilibria that generalize the Poisson equilibrium defined in \eqref{NVP.2}. In Section 3 we outline the main ideas in the proof of Theorem \ref{thm:main_simple}, following our recent work \cite{IoPaWaWiPoisson}.

\section{Linear analysis: proof of Theorem \ref{MainThm}}

In this section we prove Theorem \ref{MainThm}. We discuss first some explicit examples, the generalized Poisson equilibria, and then prove the theorem in the general case.

\subsection{The generalized Poisson equilibria}\label{GeneralPoisson}

We consider the generalized Poisson equilibria
\begin{equation*}
M_{j}(y):=\frac{c_{j}}{\left[1+\vert y\vert^2\right]^{1+j}},\qquad m_{j}(r):=\frac{c^\prime_{j}}{(1+r^2)^j}, \qquad j\in\{1,2,\ldots\},
\end{equation*}
where the constants $c_{j}$, $c^\prime_{j}$ are chosen such that $$\int_{\R^3} M_{j}(y)\,dy=\int_\R m_j(r)\,dr=1.$$ We calculate first the associated kernels $K_{j}$ defined as in \eqref{Kernels2}.

\begin{lemma}\label{ExplicitFormKPjLem}

Let $z:=\theta-i\vert\xi\vert$. Then
\begin{equation}\label{ExplicitKj}
\begin{split}
K_{j}(\xi,\theta)=-z^{-2}\sum_{p=0}^{j-1} a^{(j)}_p\left(\frac{\vert\xi\vert}{iz}\right)^p=-\frac{1}{z^2}+\frac{2i\vert\xi\vert }{z^3}+\dots
\end{split}
\end{equation}
where the coefficients $\{a^{(j)}_p\}_{0\le p\le j-1}$ are positive numbers defined recursively by
\begin{equation}\label{coeffi2}
\begin{split}
&a^{(1)}_0:=1,\qquad N^{(j+1)}:=\sum_{k=0}^{j-1}\frac{a^{(j)}_k}{2^k(k+1)},\\
&a^{(j+1)}_p:=\begin{cases}
\frac{(p+1)2^{p-1}}{N^{(j+1)}}\sum_{k=p-1}^{j-1}\frac{1}{2^{k}(k+1)}a^{(j)}_k,\qquad 1\le p\le j,\\
1,\qquad p=0.
\end{cases}
\end{split}
\end{equation}
\end{lemma}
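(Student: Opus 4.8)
The plan is to compute $K_j(\xi,\theta)=\int_0^\infty t\,\widehat{M_j}(t\xi)e^{-it\theta}\,dt$ by first finding a usable closed form for the Fourier transform $\widehat{M_j}$, then evaluating the resulting one-dimensional Laplace-type integral. Since $M_j$ is radial, $\widehat{M_j}(\xi)=\widehat{m_j}(|\xi|)$ where $m_j(r)=c'_j(1+r^2)^{-j}$ is the reduced profile from \eqref{m0}. For $j=1$ one has the classical identity $\widehat{m_1}(r)=e^{-r}$ (up to the normalization fixing $c'_1$), i.e.\ $\widehat{M_1}(\xi)=e^{-|\xi|}$ as recorded in \eqref{NVP.2}. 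The key structural observation is that the higher profiles $m_{j+1}$ are obtained from $m_j$ by applying the operator $-\tfrac{1}{2j}\tfrac{d}{d(r^2)}=-\tfrac{1}{2j}\cdot\tfrac{1}{r}\tfrac{d}{dr}$, which on the Fourier side (in $\R^1$, where $(1+r^2)^{-j}$ lives) corresponds to an explicit recursion. Concretely I would show by induction that $\widehat{m_j}(r)=P_{j-1}(r)e^{-r}$ for a polynomial $P_{j-1}$ of degree $j-1$ with positive coefficients, $P_0\equiv 1$; the recursion for the coefficients of $P_{j-1}$ is exactly what, after the Laplace transform below, produces the recursion \eqref{coeffi2}. (Equivalently one may use the Bessel-type representation for $\widehat{M_j}$ on $\R^3$, but the one-dimensional reduced form is cleaner.)

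Next I would substitute $\widehat{M_j}(t\xi)=\widehat{m_j}(t|\xi|)=P_{j-1}(t|\xi|)e^{-t|\xi|}$ into the definition of $K_j$ and compute
\begin{equation*}
K_j(\xi,\theta)=\int_0^\infty t\,P_{j-1}(t|\xi|)e^{-t|\xi|}e^{-it\theta}\,dt=\int_0^\infty t\,P_{j-1}(t|\xi|)e^{-it(\theta-i|\xi|)}\,dt=\int_0^\infty t\,P_{j-1}(t|\xi|)e^{-itz}\,dt,
\end{equation*}
with $z=\theta-i|\xi|$ (note $\Im z<0$ so the integral converges absolutely). Writing $P_{j-1}(w)=\sum_{p=0}^{j-1}c_p w^p$, each term reduces to a moment integral $\int_0^\infty t^{p+1}e^{-itz}\,dt=(p+1)!\,(iz)^{-(p+2)}$, so that
\begin{equation*}
K_j(\xi,\theta)=\sum_{p=0}^{j-1}c_p\,|\xi|^p\,(p+1)!\,(iz)^{-(p+2)}=-\frac{1}{z^2}\sum_{p=0}^{j-1}\big[(p+1)!\,c_p\big]\,i^{-p}\Big(\frac{|\xi|}{z}\Big)^p=-\frac{1}{z^2}\sum_{p=0}^{j-1}a^{(j)}_p\Big(\frac{|\xi|}{iz}\Big)^p,
\end{equation*}
after matching $a^{(j)}_p:=(p+1)!\,c_p$ and collecting the powers of $i$. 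This is precisely \eqref{ExplicitKj}; the leading terms $-z^{-2}+2i|\xi|z^{-3}+\dots$ come from $a^{(j)}_0=(1)!\,P_{j-1}(0)=1$ and $a^{(j)}_1=2!\,c_1$, with $c_1$ read off from the $j=1\to 2$ step of the polynomial recursion. It then remains to translate the polynomial recursion for the $c_p$'s into the stated recursion \eqref{coeffi2} for the $a^{(j)}_p$'s, and to check positivity — both follow by a bookkeeping induction: positivity of the $a^{(j)}_p$ is immediate once the recursion is seen to have all-positive coefficients, and the normalization constants $N^{(j+1)}$ are exactly what enforces $\int_\R m_{j+1}=1$, i.e.\ $\widehat{m_{j+1}}(0)=1$, which pins down $c'_{j+1}/c'_j$.

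The main obstacle, and the only genuinely delicate bookkeeping, is getting the recursion \eqref{coeffi2} to come out in exactly the stated form, including the factors $2^{k}(k+1)$ in the denominators and the shift $k=p-1,\dots,j-1$ in the summation range. These arise from two sources that must be tracked simultaneously: (a) the differentiation operator $-\tfrac{1}{2j}\tfrac{1}{r}\tfrac{d}{dr}$ acting on $P_{j-1}(r)e^{-r}$ lowers and raises powers of $r$ in a way that mixes coefficients, producing the index shift; and (b) the renormalization $c'_{j+1}=c'_j/N^{(j+1)}$ needed to keep $\int m_{j+1}=1$, where $N^{(j+1)}$ must be computed as a weighted sum of the previous coefficients — the weights $\tfrac{1}{2^k(k+1)}$ being exactly $\int_0^\infty r^k e^{-r}\,dr$-type normalization factors rescaled, i.e.\ coming from $\widehat{m_{j+1}}(0)=\int_\R m_{j+1}$ expressed through the $r$-integral of $P_j(r)e^{-r}$ against the right measure. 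Once (a) and (b) are assembled into a single induction step and compared with \eqref{coeffi2}, the proof closes; I do not anticipate any analytic difficulty beyond convergence of the elementary integrals, which is guaranteed by $\Im z=-|\xi|<0$ (and the degenerate case $\xi=0$, where $K_j(0,\theta)=-\theta^{-2}$, is recovered by continuity).
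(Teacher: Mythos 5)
Your skeleton is half right, and its second half coincides with the paper's proof: reduce to the one--dimensional profile, show $\widehat{m_j}(s)=Q_j(s)e^{-s}$ for $s\ge 0$ with $Q_j$ a polynomial of degree $j-1$ with positive coefficients normalized by $\widehat{m_j}(0)=1$, then use $\int_0^\infty t^{p+1}e^{-itz}\,dt=(p+1)!\,(iz)^{-(p+2)}$ (legitimate since $\Im z=-|\xi|<0$) to convert the coefficients $c_p$ of $Q_j$ into $a^{(j)}_p=(p+1)!\,c_p$ and obtain \eqref{ExplicitKj}. Where you diverge is in how you generate the polynomial recursion: you use $m_{j+1}\propto-\tfrac{1}{2j}\tfrac1r\partial_r m_j$ (incidentally $\tfrac{d}{d(r^2)}=\tfrac{1}{2r}\tfrac{d}{dr}$, so your first expression for the operator is off by a factor $2$ -- harmless, since the normalization is refixed), which on the Fourier side gives the ODE $\widehat{m_{j+1}}'(s)=-\lambda\,s\,\widehat{m_j}(s)$ for $s>0$, i.e.\ $P_{j}'-P_{j}=-\lambda sP_{j-1}$. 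The paper instead writes $(1+r^2)^{-(j+1)}=(1+r^2)^{-j}(1+r^2)^{-1}$, hence $\widehat{m_{j+1}}=c\,\widehat{m_j}\ast\widehat{m_1}$, and splits the convolution integral at $0$ and at $x$.

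The genuine gap is exactly at the step you defer to ``bookkeeping'': identifying the recursion your route produces with the stated one \eqref{coeffi2}. Carrying out your ODE route, matching coefficients in $P_j'-P_j=-\lambda sP_{j-1}$, unrolling from the top degree and imposing $P_j(0)=1$, one gets
\begin{equation*}
a^{(j+1)}_p=\frac{(p+1)\sum_{k=p-1}^{j-1}a^{(j)}_k}{\sum_{k=0}^{j-1}a^{(j)}_k},\qquad 1\le p\le j,\qquad a^{(j+1)}_0=1,
\end{equation*}
so the index range $k\ge p-1$ does appear, but with uniform weights: no factors $2^{k}(k+1)$. This recursion yields the same numbers as \eqref{coeffi2} only because both compute the Fourier transform of the same function; proving their equivalence directly is a nontrivial identity, not a relabeling, and your proposed source for the weights $\tfrac{1}{2^k(k+1)}$ (rescaled $\int_0^\infty r^ke^{-r}\,dr$--type normalization factors) is not what occurs in your setup, where the normalization sum is $\sum_k (k+1)!\,c_k=\sum_k a^{(j)}_k$ with no powers of $2$. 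The $2^{-k}$ weights in \eqref{coeffi2} come from the $e^{-2y}$ tails produced when the convolution $\widehat{m_j}\ast\widehat{m_1}$ of two exponentially decaying profiles is split over the ranges $y<0$, $0<y<x$, $y>x$ -- an object your approach never encounters. So either switch to the convolution identity (the paper's route, which computes the operators $I_-,I_0,I_+$ on monomials and yields \eqref{coeffi2} directly after normalizing by $Q_{j+1}(0)$), or supply an actual inductive proof that your ODE-generated recursion coincides with \eqref{coeffi2}; as written, the specific content of the lemma -- the coefficients being given by \eqref{coeffi2} -- is not established.
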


In particular, we can compute the first few kernels:
\begin{equation}\label{ExplicitKGP}
\begin{split}
K_{1}(\xi,\theta)&=-\frac{1}{z^2},\quad 
K_{2}(\xi,\theta)=-\frac{1}{z^2}+\frac{2i\vert\xi\vert}{z^3},\quad
K_{3}(\xi,\theta)=-\frac{1}{z^2}+\frac{2i\vert\xi\vert}{z^3}+\frac{2\vert\xi\vert^2}{z^4}.
\end{split}
\end{equation}

\begin{proof}[Proof of Lemma \ref{ExplicitFormKPjLem}]

We start by computing $\widehat{m_j}(s)$. When $j=1$, we have the usual Poisson kernel $\widehat{m_1}(s)=e^{-\vert s\vert}$. For $j\ge2$, the formula is obtained (up to a normalizing multiplicative constant) by iterated convolution: $\widehat{m_{j+1}}=c\cdot \widehat{m_j}\ast\widehat{m_1}$. Letting $\ast^j\widehat{m_1}(r)=Q_j(\vert r\vert)e^{-\vert r\vert}$ where $Q_j$ is a polynomial of order $j-1$, we find that, for $x\ge0$,
\begin{equation*}
\begin{split}
Q_{j+1}(x)e^{-x}&=e^{-x}\int_{-\infty}^0Q_j(-y)e^{2y}dy+e^{-x}\int_{0}^xQ_j(y)dy+e^x\int_{x}^\infty Q_j(y)e^{-2y}dy\\
&=e^{-x}\left\{I_-[Q_j]+I_0[Q_j]+I_+[Q_j]\right\}
\end{split}
\end{equation*}
and the action of the integral operators can easily be computed on a basis element. Indeed, we notice that $I_-[x^n]=(n/2)I_-[x^{n-1}]$ when $n\ge 1$ and $I_{\pm}[1]=1/2$, while $I_+[x^{n}]=x^n/2+(n/2)I_+[x^{n-1}]$, $n\ge 1$. Thus
\begin{equation*}
\begin{split}
I_-[x^n]=\frac{n!}{2^{n+1}},\qquad I_0[x^n]=\frac{x^{n+1}}{n+1},\qquad I_+[x^n]=\sum_{k=0}^n\frac{n!}{k!2^{n+1-k}}x^k,
\end{split}
\end{equation*}
and we obtain the following recursive formula: if $Q_j=\sum_{p=0}^{j-1}d_p^{(j)}x^p$, then
\begin{equation*}
\begin{split}
Q_{j+1}&=\sum_{p=0}^{j}d_p^{(j+1)}x^p,\qquad d_p^{(j+1)}:=\frac{2^p}{p!}\sum_{k=p-1}^{j-1}\frac{k!}{2^{k+1}}d_k^{(j)},\quad p\ge 1,\qquad  d_0^{(j+1)}:=\sum_{k=0}^{j-1}\frac{k!}{2^k}d^{(j)}_k.
\end{split}
\end{equation*}
In particular, we observe that
\begin{equation*}
\begin{split}
Q_{j+1}(0)=Q_{j+1}^\prime(0)=\sum_{k=0}^{j-1}\frac{k!}{2^k}d^{(j)}_k.
\end{split}
\end{equation*}

Now, using the formula
\begin{equation*}
\begin{split}
\int_{0}^\infty t(t\vert\xi\vert)^pe^{-t\vert\xi\vert}e^{-it\theta}dt&=\vert\xi\vert^p\int_{0}^\infty t^{p+1}e^{-itz}dt=\vert\xi\vert^p\frac{(p+1)!}{(iz)^{p+2}}
\end{split}
\end{equation*}
we see that
\begin{equation*}
\begin{split}
\int_{t=0}^\infty tQ_j(t\vert\xi\vert)e^{-t\vert\xi\vert}e^{-it\theta}dt=-z^{-2}\sum_{p=0}^{j-1} (p+1)!d^{(j)}_p\left(\frac{\vert\xi\vert}{iz}\right)^p.
\end{split}
\end{equation*}
The conclusion of the lemma follows since $\widehat{m_j}(r)=Q_j(|r|)e^{-|r|}/Q_j(0)$. 
\end{proof}

\subsubsection{The regular Poisson kernel}\label{RegularPoisson}

We can see from \eqref{Kernels2} that the important function is the Fourier transform of $K(1+K)^{-1}$. This can easily be computed from Lemma \ref{ExplicitFormKPjLem} and the Fourier formula (coming from the Cauchy integral whenever $\Im(\zeta)>0$ and $\tau\geq 0$)
\begin{equation}\label{GenForm}
\int_{-\infty}^\infty\frac{e^{i\tau\theta}d\theta}{\theta-\zeta}=2\pi i e^{i\tau\zeta}.
\end{equation}
In the case of the regular Poisson equilibrium, we find that
\begin{equation*}
\begin{split}
\frac{K_1(\xi,\theta)}{1+K_1(\xi,\theta)}=-\frac{1}{2}\left(\frac{1}{\theta-(1+i\vert\xi\vert)}-\frac{1}{\theta-(-1+i\vert\xi\vert)}\right),
\end{split}
\end{equation*}
and, using the definition \eqref{Kernels2},
\begin{equation}\label{GP}
\widehat{G_1}(\xi,\tau)=\delta_0(\tau)- \mathfrak{1}_{[0,\infty)}(\tau)e^{-\vert\xi\vert\tau}\sin(\tau).
\end{equation}
In particular, we notice that
\begin{equation}\label{GPex}
\widehat{G_1}(\xi,\tau)=e^{-\tau\vert\xi\vert}\frac{d}{d\tau}\left(\cos(\tau)\mathfrak{1}_{[0,\infty}(\tau)\right).
\end{equation}
This second formula is important in nonlinear analysis, as explained in \cite{IoPaWaWiPoisson}, as it allows us to integrate by parts in $\tau$ in the integral \eqref{SolvingVolterra1} (after suitable localizations), to obtain a second identity for the density $\rho$. This integration by parts in $\tau$ is analogous to the method of normal forms. See Lemma \ref{SolvingVolterraP} for the precise identities.

\subsubsection{The first generalized Poisson kernel} In view of \eqref{ExplicitKGP} we have
\begin{equation*}
\begin{split}
\frac{K_2(\xi,\theta)}{1+K_2(\xi,\theta)}&=\frac{\zeta+2\vert\xi\vert}{\zeta^3+\zeta+2\vert\xi\vert},\qquad\zeta=iz=\vert\xi\vert+i\theta.
\end{split}
\end{equation*}
Now, we see easily that the polynomial $Q_2(\zeta)=\zeta^3+\zeta+2\vert\xi\vert$ has one real root $r_0<0$ and two complex conjugate roots $r_1=\rho+i\kappa=\overline{r_2}$, $\kappa>0$. Identifying coefficients with symmetric functions of the roots, we find that
\begin{equation}\label{SymCoefP2}
\begin{split}
r_0=-2\rho<0,\qquad 1+3\rho^2=\kappa^2,\qquad \vert\xi\vert=\rho+4\rho^3.
\end{split}
\end{equation}
We can expand in partial fractions
\begin{equation*}
 \frac{\zeta+2\vert\xi\vert}{\zeta^3+\zeta+2\vert\xi\vert}=\frac{-2\alpha}{\zeta+2\rho}+\frac{\alpha+i\beta}{\zeta-(\rho+i\kappa)}+\frac{\alpha-i\beta}{\zeta-(\rho-i\kappa)},
\end{equation*}
where
\begin{equation*}
 1=4\rho\alpha+2\rho\alpha-2\kappa\beta,\qquad 2\vert\xi\vert=-2\alpha(\rho^2+\kappa^2)-2\rho(2\rho\alpha+2\kappa\beta).
\end{equation*}
Since $\kappa^2=1+3\rho^2$ and $|\xi|=\rho+4\rho^3$ we find
\begin{equation}\label{PFDP3}
 \alpha=-\frac{4\rho^3}{1+12\rho^2},\qquad \beta=-\frac{1}{2\kappa}\Big(1+\frac{24\rho^4}{1+12\rho^2}\Big),
\end{equation}
and we arrive at the formula
\begin{equation*}
\begin{split}
\frac{K_2(\xi,\theta)}{1+K_2(\xi,\theta)}&=\frac{2i\alpha}{\theta-i(\vert\xi\vert-r_0)}+\frac{\beta-i\alpha}{\theta-i(\vert\xi\vert-r_1)}+\frac{-\beta-i\alpha}{\theta-i(\vert\xi\vert-\overline{r_1})}.
\end{split}
\end{equation*}
Using \eqref{Kernels2} and the Cauchy integral formula \eqref{GenForm}, this gives
\begin{equation*}
\begin{split}
\widehat{G_2}(\xi,\tau)&=\delta_0(\tau)+\mathfrak{1}_{[0,\infty)}(\tau)\big[2 \alpha e^{-(2\rho+\vert\xi\vert)\tau}-2e^{-(\vert\xi\vert-\rho)\tau}\Re\left\{(\alpha+i\beta)e^{i\kappa\tau}\right\}\big].
\end{split}
\end{equation*}
Using \eqref{SymCoefP2}--\eqref{PFDP3} we can expand when $|\xi|\ll 1$
\begin{equation}\label{expand3}
\begin{split}
&\rho=|\xi| +O(|\xi|^3),\qquad \kappa=1+\frac{3|\xi|^2}{2}+O(|\xi|^4),\\
&\alpha=-4\vert\xi\vert^3+O(\vert\xi\vert^5),\qquad\beta=-\frac{1}{2}+\frac{3}{4}\vert\xi\vert^2+O(\vert\xi\vert^4).
\end{split}
\end{equation}

\subsubsection{The second generalized Poisson kernel}

For the second generalized Poisson kernel, we have, with $z=\theta-i\vert\xi\vert$ and $\zeta=iz=|\xi|+i\theta$ as before,
\begin{equation*}
\begin{split}
K_{3}(\xi,\theta)&=-\frac{1}{z^2}+\frac{2i\vert\xi\vert}{z^3}+\frac{2\vert\xi\vert^2}{z^4},\\
\frac{K_3(\xi,\theta)}{1+K_3(\xi,\theta)}&=\frac{\zeta^2+2\vert\xi\vert\zeta+2\vert\xi\vert^2}{\zeta^4+\zeta^2+2\vert\xi\vert\zeta+2\vert\xi\vert^2}.
\end{split}
\end{equation*}
The polynomial $Q_3(\zeta):=\zeta^4+\zeta^2+2\vert\xi\vert\zeta+2\vert\xi\vert^2=\zeta^4+(\zeta+\vert\xi\vert)^2+\vert\xi\vert^2$ has two couples of complex conjugate roots $r_1=\overline{r_2}=\rho_1+i\kappa_1$, $r_3=\overline{r_4}=\rho_3+i\kappa_3$, $\kappa_1,\kappa_3>0$. In addition, we have that
\begin{equation*}
\begin{split}
\rho_1+\rho_3=0,&\quad \rho_1^2+\kappa_1^2+4\rho_1\rho_3+\rho_3^2+\kappa_3^2=1,\\
 2\rho_1(\rho_3^2+\kappa_3^2)+2\rho_3(\rho_1^2+\kappa_1^2)=-2\vert\xi\vert, &\quad (\rho_1^2+\kappa_1^2)(\rho_3^2+\kappa_3^2)=2\vert\xi\vert^2,
\end{split}
\end{equation*}
from which we deduce that
\begin{equation*}
\begin{split}
0<\rho:=\rho_1=-\rho_3,&\qquad \kappa_1^2+\kappa_3^2-2\rho^2=1,\\
\rho(\kappa_3^2-\kappa_1^2)=-\vert\xi\vert&\qquad \rho^4+\rho^2(\kappa_1^2+\kappa_3^2)+\kappa_1^2\kappa_3^2=2\vert\xi\vert^2.
\end{split}
\end{equation*}
Therefore
\begin{equation}\label{rootsId}
\begin{split}
&2\kappa_1^2=1+2\rho^2+\frac{\vert\xi\vert}{\rho}\qquad 2\kappa_3^2=1+2\rho^2-\frac{\vert\xi\vert}{\rho},\\
&16\rho^6+8\rho^4+\rho^2(1-8\vert\xi\vert^2)=\vert\xi\vert^2.
 \end{split}
\end{equation}

We now have the partial fraction decomposition
\begin{equation*}
\begin{split}
\frac{\zeta^2+2\vert\xi\vert \zeta+2\vert\xi\vert^2}{\zeta^4+\zeta^2+2\vert\xi\vert \zeta+2\vert\xi\vert^2}&=\frac{a+ib}{\zeta-r_1}+\frac{a-ib}{\zeta-\overline{r_1}}+\frac{c+id}{\zeta-r_3}+\frac{c-id}{\zeta-\overline{r_3}}\\
&=\frac{2a\zeta-2(a\rho+b\kappa_1)}{(\zeta-\rho)^2+\kappa_1^2}+\frac{2c\zeta-2(-c\rho+d\kappa_3)}{(\zeta+\rho)^2+\kappa_3^2},
\end{split}
\end{equation*}
and bringing to the same denominator gives 
\begin{equation*}
\begin{split}
a+c=0,\qquad (a-c)\rho-(b\kappa_1+d\kappa_3)&=\frac12,\\
a(-\rho^2+\kappa_3^2)+c(-\rho^2+\kappa_1^2)+2\rho(-b\kappa_1+d\kappa_3)&=\vert\xi\vert,\qquad \\
(a\rho+b\kappa_1)(\rho^2+\kappa_3^2)+(-c\rho+d\kappa_3)(\rho^2+\kappa_1^2)&=-\vert\xi\vert^2.
\end{split}
\end{equation*}
After algebraic simplifications and using also \eqref{rootsId}, we have
\begin{equation*}
\begin{split}
a+c=0,\qquad b\kappa_1+d\kappa_3=2a\rho-\frac{1}{2},\qquad -a\vert\xi\vert+2\rho^2(-b\kappa_1+d\kappa_3)=\vert\xi\vert\rho\\
a\rho(1+4\rho^2)+b\kappa_1\Big[2\rho^2+\frac{\rho-\vert\xi\vert}{2\rho}\Big]+d\kappa_3\Big[2\rho^2+\frac{\rho+\vert\xi\vert}{2\rho}\Big]=-\vert\xi\vert^2.
\end{split}
\end{equation*}
Using the second and third identities, the last identity gives
\begin{equation*}
a\rho(1+4\rho^2)+\frac{(4\rho^2+1)(4a\rho-1)}{4}+\frac{(a|\xi|+\rho|\xi|)|\xi|}{4\rho^3}=-\vert\xi\vert^2.
\end{equation*}
Therefore
\begin{equation}\label{rootsId2}
\begin{split}
a&=\rho(1+4\rho^2)\frac{\rho^2-\vert\xi\vert^2}{8\rho^4(1+4\rho^2)+\vert\xi\vert^2},\qquad c=-a,\\
b\kappa_1&=-\frac{1}{4}+a\rho-\frac{|\xi|}{4\rho}-\frac{a\vert\xi\vert}{4\rho^2},\qquad d\kappa_3=-\frac{1}{4}+a\rho+\frac{|\xi|}{4\rho}+\frac{a\vert\xi\vert}{4\rho^2}.
\end{split}
\end{equation}

Using also the formula \eqref{GenForm}, we deduce that
\begin{equation*}
\begin{split}
\frac{K_3(\xi,\tau)}{1+K_3(\xi,\theta)}&=\frac{b-ia}{\theta-(\kappa_1+i(\vert\xi\vert-\rho))}-\frac{b+ia}{\theta-(-\kappa_1+i(\vert\xi\vert-\rho))}\\
&+\frac{d+ia}{\theta-(\kappa_3+i(\vert\xi\vert+\rho))}-\frac{d-ia}{\theta-(-\kappa_3+i(\vert\xi\vert+\rho))},\\
\widehat{G_3}(\xi,\tau)=\delta_0(\tau)&-\mathfrak{1}_{[0,\infty)}(\tau)\big\{2e^{-(\vert\xi\vert-\rho)\tau}\Re\left\{(a+ib)e^{i\kappa_1\tau}\right\}+2e^{-(\rho+\vert\xi\vert)\tau}\Re\{(-a+id)e^{i\kappa_3\tau}\}\big\}.
\end{split}
\end{equation*}
Finally, using \eqref{rootsId}--\eqref{rootsId2} we can expand when $|\xi|\ll 1$
\begin{equation}\label{expand6}
\begin{split}
&\rho=|\xi|-8|\xi|^5+O(|\xi|^7),\qquad \kappa_1=1+\frac{|\xi|^2}{2}+O(|\xi|^4),\qquad\kappa_3=|\xi|-2|\xi|^3+O(|\xi|^5),\\
&a=-16|\xi|^5+O(|\xi|^7),\qquad b=-\frac{1}{2}+O(|\xi|^2),\qquad d=-2|\xi|^3+O(|\xi|^5).
\end{split}
\end{equation}

\begin{figure}[h]
 \centering
 \includegraphics[width=0.6\textwidth]{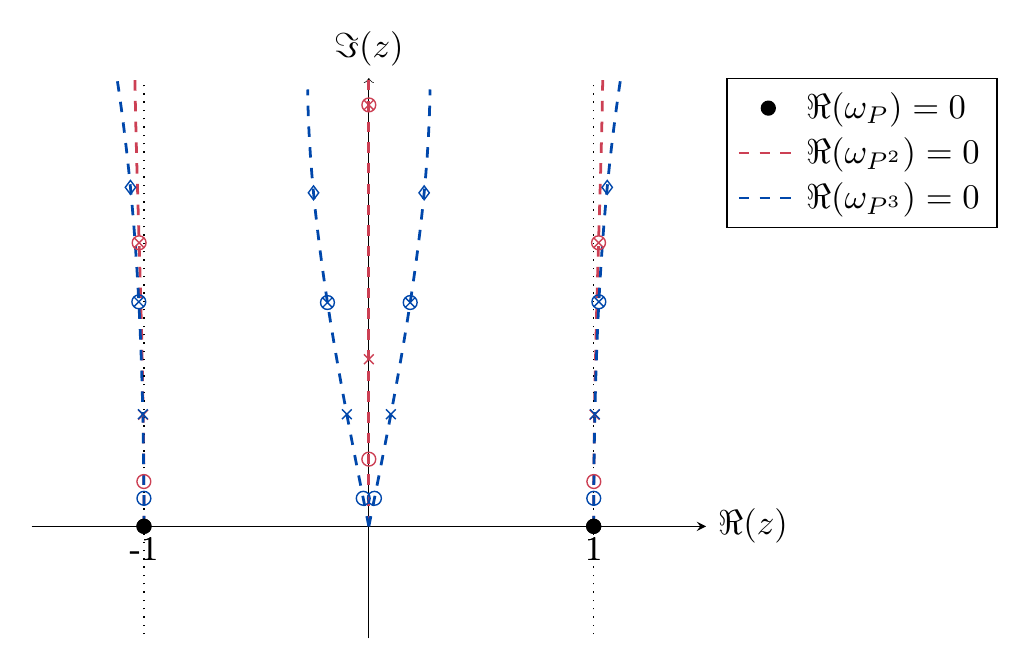}
 \caption{The curves of poles for the first two generalized Poisson equilibria. As $|\xi|$ increases, the poles move upwards, as indicated by the marks along the curves for sample values of $|\xi|$.}
 \label{fig:poles}
\end{figure}

\subsubsection{Higher order generalized Poisson kernels}

In general, it is more difficult to isolate the poles of $K_j(1+K_j)^{-1}$. However, using Lemma \ref{ExplicitFormKPjLem} we see that
\begin{equation*}
\frac{K_j(\xi,\theta)}{1+K_j(\xi,\theta)}=\frac{\sum_{p=0}^{j-1} a^{(j)}_p|\xi|^p(iz)^{j-1-p}}{(iz)^{j+1}+\sum_{p=0}^{j-1} a^{(j)}_p|\xi|^p(iz)^{j-1-p}}=\frac{\sum_{p=0}^{j-1} a^{(j)}_p|\xi|^p\zeta^{j-1-p}}{\zeta^{j+1}+\sum_{p=0}^{j-1} a^{(j)}_p|\xi|^p\zeta^{j-1-p}},
\end{equation*}
where $\zeta=iz=|\xi|+i\theta$. As before, let
\begin{equation*}
Q_j(\zeta):=\zeta^{j+1}+\sum_{p=0}^{j-1} a^{(j)}_p|\xi|^p\zeta^{j-1-p}=\zeta^{j+1}+\zeta^{j-1}+2|\xi|\zeta^{j-2}+\ldots.
\end{equation*}
It follows from Rouch\'e's theorem that if $\vert\xi\vert\ll 1$ is small enough, the polynomial $Q_j$ has one root around $\zeta=\pm i$ and $j-1$ roots in the ball $B(0,C_j\vert\xi\vert)$, for some constant $C_j$. 

We can also prove that all these roots have real part strictly smaller than $|\xi|$, which allows us to apply formula \eqref{GenForm} to prove exponential decay (this follows from the fact that $a^{(j)}_p\ge 0$; we give a more general proof in Proposition \ref{PropG1}).
In addition, we can use the implicit function theorem to study the branch of the root bifurcating from $i$ and get that
\begin{equation}\label{Pole1ForGeneralizedPoisson}
\begin{split}
\zeta_1=|\xi|+i\Big(1+\frac{3-a^{(j)}_2}{2}\vert\xi\vert^2\Big)+O(\vert\xi\vert^3),
\end{split}
\end{equation}
when $|\xi|\ll 1$. We note that by construction $0\leq a_2^{(j)}<3$.

\subsection{The general case} We consider now the case of general acceptable equilibria, as in Definition \ref{DefinitionEquilibrium}, when the linearized equation cannot be solved as explicitly. From \eqref{Kernels2}, we see that we need to study the kernel $K$, and using the notations above, we can express $K$ in terms of a single holomorphic function ${\bf k}$:
\begin{equation}\label{DefK}
\begin{split}
K(\xi,\theta)&=\int_{0}^\infty t\widehat{m_0}(t\vert \xi\vert)e^{-it\theta}dt=-\frac{i}{\vert\xi\vert^2}\int_{0}^\infty \widehat{m_0^\prime}(s)e^{-is\theta/\vert\xi\vert}ds=-\frac{1}{\vert\xi\vert^2}\mathbf{k}(\frac{\theta}{\vert\xi\vert}).
\end{split}
\end{equation}

\begin{remark}
In the case of the generalized Poisson equilibria, it follows from \eqref{ExplicitKGP} that
\begin{equation}\label{ExplicitkGP}
\begin{split}
{\bf k}_1(z)&=-(1+iz)^{-2},\qquad {\bf k}_{2}(z)=-(1+iz)^{-2}-2(1+iz)^{-3},\\
{\bf k}_{3}(z)&=-(1+iz)^{-2}-2(1+iz)^{-3}-2(1+iz)^{-4}.
\end{split}
\end{equation}
\end{remark}

It is apparent from the formula that $\mathbf{k}$ extends to a holomorphic function on the lower half-plane $\mathbb{H}_-:=\{z\in\C:\,\Im z< 0\}$, which can easily be calculated using Fubini:
\begin{equation}\label{DefG}
\begin{split}
\mathbf{k}(z)&=i\int_{0}^\infty\left(\int_{-\infty}^\infty m_0^\prime(t)e^{-is(t+z)}dt\right)ds=\int_{-\infty}^\infty\frac{m_0^\prime(t)}{t+z}dt=-\int_{-\infty}^\infty\frac{m_0^\prime(t)}{z-t}dt.
\end{split}
\end{equation}
This formula extends to the real axis using the Plemelj formula: for $z=x+iy$,
\begin{equation*}
\begin{split}
\frac{1}{z-t}=\frac{x-t}{(x-t)^2+y^2}-i\frac{y}{\vert y\vert}\frac{\vert y\vert}{(x-t)^2+y^2}.
\end{split}
\end{equation*}
Therefore
\begin{equation}\label{kRealAxis}
\begin{split}
\lim_{y\to 0,\,y<0}\mathbf{k}(x+iy)&=-\hbox{p.v.}\int_{-\infty}^\infty \frac{m'_0(t)}{x-t}dt-i\pi m_0^\prime(x)=-\pi\mathcal{H}(m'_0)(x)-i\pi m_0^\prime(x),
\end{split}
\end{equation}
where $\mathcal{H}$ denotes the Hilbert transform on $\R$. We start with a general useful criterion from Penrose \cite{Pe}. 

\begin{proposition}[Penrose]\label{PropG1}

Assume that $m_0:\R\to\R$ is an even $C^3$ function such that $m_0$ is decreasing on $[0,\infty)$ and $m'_0(t)(1+t^2)\in L^\infty(\R)$. We define the function $\mathbf{k}:\overline{\mathbb{H}_-}\to\C$ as in \eqref{DefG}--\eqref{kRealAxis}.

Then $\mathbf{k}$ is holomorphic on $\mathbb{H}_-$ and continuous on $\overline{\mathbb{H}_-}\cup\{\infty\}$ with $\mathbf{k}(\infty)=0$. In addition, $\mathbf{k}(0)\in(-\infty,0)$ and we have an expansion around $0$,
\begin{equation}\label{ExpG0}
\begin{split}
\mathbf{k}(z)=\mathbf{k}(0)-i\pi m_0''(0)z+O(|z|^2).
\end{split}
\end{equation}
Moreover, $\mathbf{k}(\overline{\mathbb{H}_-})\subseteq\mathbb{C}\setminus [0,\infty)$ and $\mathbf{k}$ satisfies the functional equation
\begin{equation}\label{FunctionalEq1}
\mathbf{k}(-\overline{z})=\overline{\mathbf{k}(z)}.
\end{equation}
In particular, if $\xi\ne 0$ and $\theta\in\overline{\mathbb{H}_-}$ then
\begin{equation}\label{NonVan}
1+K(\xi,\theta)\neq 0.
\end{equation}
\end{proposition}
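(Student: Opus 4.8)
The plan is to verify the listed properties of $\mathbf{k}$ one by one, keeping the Penrose-type nonvanishing assertion \eqref{NonVan} for last, since it is the only point where the monotonicity of $m_0$ is genuinely used. First the analytic preliminaries. Holomorphy on $\mathbb{H}_-$ follows by differentiating \eqref{DefG} under the integral sign, which is justified because $m_0'\in L^1(\R)$ (indeed $|m_0'(t)|\lesssim(1+t^2)^{-1}$) and $|z-t|\geq|\Im z|>0$ on $\mathbb{H}_-$. Continuity up to the real axis, together with the explicit boundary values in \eqref{kRealAxis}, is exactly the Sokhotski--Plemelj formula, valid here since $m_0'\in C^2$ with decay. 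For continuity at $\infty$ with $\mathbf{k}(\infty)=0$ I would use the Laplace-type representation $\mathbf{k}(z)=i\int_0^\infty\widehat{m_0'}(s)e^{-isz}\,ds$ behind \eqref{DefK}: on $\{\Im z\leq-\delta\}$ it is $O(|z|^{-1})$ by a direct bound, and the approach to $\infty$ along the real axis is handled by a Riemann--Lebesgue argument applied to $s\mapsto\widehat{m_0'}(s)e^{s\Im z}\mathfrak{1}_{[0,\infty)}(s)$, uniformly in $\Im z\leq 0$. The functional equation \eqref{FunctionalEq1} is immediate from \eqref{DefG}: since $m_0'$ is real and odd, $\overline{\mathbf{k}(z)}=-\int\frac{m_0'(t)}{\bar z-t}\,dt=-\int\frac{m_0'(-t)}{\bar z+t}\,dt=\int\frac{m_0'(t)}{\bar z+t}\,dt=\mathbf{k}(-\bar z)$.

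Next, the behavior near $0$. Using \eqref{kRealAxis} at $x=0$ together with $m_0'(0)=0$ (because $m_0'$ is odd), one gets $\mathbf{k}(0)=-\pi\mathcal{H}(m_0')(0)=\int_\R\frac{m_0'(t)}{t}\,dt$, and since $m_0'(t)/t<0$ for $t\neq 0$ while $m_0'(t)/t\to m_0''(0)$ as $t\to0$, this integral converges to a strictly negative number, so $\mathbf{k}(0)\in(-\infty,0)$. For the expansion \eqref{ExpG0} I would Taylor-expand the Laplace representation: $\mathbf{k}'(z)=\int_0^\infty s\,\widehat{m_0'}(s)e^{-isz}\,ds=-i\int_0^\infty\widehat{m_0''}(s)e^{-isz}\,ds$, whose boundary value at $z=0$ equals $-i\int_0^\infty\widehat{m_0''}(s)\,ds=-i\pi m_0''(0)$, using that $\widehat{m_0''}$ is even (as $m_0''$ is even) and Fourier inversion gives $\int_\R\widehat{m_0''}=2\pi m_0''(0)$; the $O(|z|^2)$ remainder comes from bounding $\mathbf{k}''$ near $0$.

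Finally, and this is the heart of the matter, I claim $\mathbf{k}(z)\notin[0,\infty)$ for every $z\in\overline{\mathbb{H}_-}$; this yields \eqref{NonVan} at once, since $1+K(\xi,\theta)=0$ with $\xi\neq 0$ would force $\mathbf{k}(\theta/|\xi|)=|\xi|^2\in(0,\infty)$ while $\theta/|\xi|\in\overline{\mathbb{H}_-}$. Write $z=x-iy$ with $y\geq0$ and split into cases on the sign of $x$. If $x>0$ and $y>0$, then $\Im\mathbf{k}(z)=-\int_\R m_0'(t)\frac{y\,dt}{(x-t)^2+y^2}$, and using the oddness of $m_0'$ this equals $-\int_0^\infty m_0'(t)\big[P_y(x-t)-P_y(x+t)\big]\,dt$ with $P_y(u):=\frac{y}{u^2+y^2}>0$; since $x,t>0$ we have $(x-t)^2<(x+t)^2$, hence $P_y(x-t)>P_y(x+t)$, while $m_0'(t)<0$, so $\Im\mathbf{k}(z)>0$. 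If $x>0$ and $y=0$, then $\Im\mathbf{k}(x)=-\pi m_0'(x)>0$ directly from \eqref{kRealAxis}. Thus $\mathbf{k}(z)\notin\R$ when $\Re z>0$; the case $\Re z<0$ follows from \eqref{FunctionalEq1} (or the mirror computation), giving $\Im\mathbf{k}(z)<0$. In the remaining case $\Re z=0$, i.e. $z=-iy$ with $y\geq0$, the same symmetrization gives $\Im\mathbf{k}(z)=0$, whereas $\Re\mathbf{k}(-iy)=\int_\R\frac{t\,m_0'(t)}{t^2+y^2}\,dt<0$ because $t\,m_0'(t)<0$ for $t\neq0$; hence $\mathbf{k}(-iy)\in(-\infty,0)$. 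This exhausts $\overline{\mathbb{H}_-}$.

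The main obstacle — and the only step where the structure of the equilibrium is essential — is the sign analysis in the last paragraph: one must combine the oddness of $m_0'$ with the strict monotonicity of $m_0$ on $(0,\infty)$, packaged through the monotonicity of the Poisson kernel $P_y$. The analytic preliminaries (holomorphy, Plemelj boundary values, the Taylor expansion, the functional equation) are routine, although the continuity at $\infty$ requires a little care with the Riemann--Lebesgue step and the relevant integrability.
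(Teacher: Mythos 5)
Your proof of the central claim $\mathbf{k}(\overline{\mathbb{H}_-})\subseteq\C\setminus[0,\infty)$ is correct but follows a genuinely different route from the paper. The paper argues in Nyquist--Penrose style: from \eqref{kRealAxis} the boundary curve $\mathbf{k}(\partial\mathbb{H}_-)$ meets the real axis only at $\mathbf{k}(0)<0$ (and at $\mathbf{k}(\infty)=0$), so every $x>0$ has winding number zero with respect to it, and the argument principle then shows that $\mathbf{k}-x$ has no zeros in $\mathbb{H}_-$. You instead prove the pointwise statement directly: symmetrizing with the oddness of $m_0'$ and using the monotonicity of the Poisson kernel $P_y$ you get $\Im\mathbf{k}(z)>0$ on $\{\Re z>0\}\cap\overline{\mathbb{H}_-}$ (with the boundary value $\Im\mathbf{k}(x)=-\pi m_0'(x)$ covering the real axis), the mirror sign for $\Re z<0$ via \eqref{FunctionalEq1}, and on the imaginary axis $\mathbf{k}(-iy)=\int_\R t\,m_0'(t)/(t^2+y^2)\,dt<0$. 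This is more elementary (no contour/index bookkeeping), it excludes the value $0$ and handles boundary points in the same stroke, and it makes transparent that the structural input is exactly radial symmetry plus strict monotonicity of $m_0$; the trade-off is that it leans essentially on the evenness of $m_0$, whereas the paper's index argument is the form of the Penrose criterion that extends to non-symmetric equilibria. Both proofs use strict negativity of $m_0'$ on $(0,\infty)$ at the same place (to rule out real boundary values at $x\neq0$), and your reduction of \eqref{NonVan} to this statement is the same as the paper's.

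One step you should tighten is the remainder in \eqref{ExpG0}. You propose to get the $O(|z|^2)$ error by ``bounding $\mathbf{k}''$ near $0$'', but under the stated hypotheses ($m_0\in C^3$, with decay assumed only on $m_0'$) the function $\mathbf{k}''(z)=-2\int_\R m_0'(t)(z-t)^{-3}\,dt$ need not stay bounded as $z\to0$ in $\overline{\mathbb{H}_-}$: the cubic kernel applied to the $o(|t|)$ part of $m_0'(t)-m_0''(0)t$ generically produces a contribution of size $\int_{|z|}^1 \omega(t)\,t^{-1}dt$, with $\omega$ the modulus of continuity of $m_0'''$, which can diverge; Taylor with $\sup|\mathbf{k}''|$ then only gives a bound with a logarithmic loss. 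The paper instead estimates the difference quotient directly, writing $m_0'(t)=m_0''(0)\,t\,\mathfrak{1}_{[-1,1]}(t)+t^2B(t)$ with $B$ odd and exploiting the explicit antiderivative for the linear piece, as in \eqref{kof0exp}; alternatively, for the acceptable equilibria used later, $\mathbf{k}$ extends analytically across $z=0$ by \eqref{DefkExtended} and the expansion is immediate. Relatedly, your evaluation $\int_0^\infty\widehat{m_0''}(s)\,ds=\pi m_0''(0)$ should be justified as an Abel limit from $\Im z<0$ (the hypotheses do not give $\widehat{m_0''}\in L^1$), or replaced by a Plemelj-type computation for $\mathbf{k}'$. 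These are repairable points of rigor; the rest of your analytic preliminaries (holomorphy, Plemelj boundary values, decay at infinity, the functional equation, $\mathbf{k}(0)<0$) match the paper's argument.
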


\begin{proof}[Proof of Proposition \ref{PropG1}]

Direct inspection of \eqref{DefG} leads to \eqref{FunctionalEq1}. In addition, since $m_0^\prime(0)=0$, the imaginary part of $\mathbf{k}(0)$ vanishes and we find that
\begin{equation}\label{kof0}
\begin{split}
\mathbf{k}(0)=-\lim_{\varepsilon\to 0}\int_{|t|\geq\varepsilon} \frac{m'_0(t)}{-t}dt=\int_{\R} \frac{m'_0(t)}{t}\,dt<0,
\end{split}
\end{equation}
since $m'_0$ is odd and negative on $[0,\infty)$. Moreover, if $z=x+iy$, $y<0$, and $m'_0(t)=At\mathfrak{1}_{[-1,1]}(t)+t^2B(t)$, $A=m''_0(0)$, with $B$ odd satisfying $|B'(t)|\lesssim 1$ if $|t|\leq 1$, then 
\begin{equation}\label{kof0exp}
\begin{split}
\big|\mathbf{k}(z)-\mathbf{k}(0)&+i\pi m_0''(0)z\big|=\Big|\int_{\R}\frac{m_0^\prime(t)}{t-z}\,dt-\int_{\R} \frac{m'_0(t)}{t}\,dt+i\pi m_0''(0)z\Big|\\
&=|z|\Big|\int_{\R}\frac{m_0^\prime(t)}{(t-z)t}\,dt+i\pi m_0''(0)\Big|\\
&\leq|z|\Big|\int_{-1}^1\frac{A}{t-x-iy}\,dt+i\pi A\Big|+|z|\Big|\int_{\R}\frac{t^2B(t)}{(t-z)t}-\frac{t^2B(t)}{t^2}\,dt\Big|\\
&\lesssim|z|^2.
\end{split}
\end{equation}
This proves \eqref{ExpG0}.

The definition shows easily that $\lim_{|z|\to\infty,\,z\in\overline{\mathbb{H}_-}}\mathbf{k}(z)=0$. Let $C$ denote the image of $\partial\mathbb{H}_-$ under $\mathbf{k}$ (see Figure \ref{fig:scatter}). This is a continuous curve, $C^1$ except at $\mathbf{k}(\infty)=0$. We see from \eqref{kRealAxis} that the only other place where the curve crosses the real axis is at the minimum, $z=0$, where $\mathbf{k}(0)< 0$. Let $x>0$ be any positive real number. From the considerations above, the index of $x$ with respect to $C$ is $0$, thus
\begin{equation*}
\begin{split}
0=\hbox{Ind}_{x}(C)=\frac{1}{2\pi i}\int_{C}\frac{dz}{z-x}=\frac{1}{2\pi i}\int_{\partial\mathbb{H}_-}\frac{\mathbf{k}^\prime(\zeta)}{\mathbf{k}(\zeta)-x}d\zeta.
\end{split}
\end{equation*}
On the other hand, by contour deformation (the so-called ``argument principle'') it is easy to see that if $f$ is meromorphic inside a domain $D$ bounded by a curve $C$, then
\begin{equation*}
\begin{split}
\frac{1}{2\pi i}\int_C\frac{f^\prime(z)}{f(z)}dz=-Z+P=-\#\{f=0\}+\#\{f=\infty\}.
\end{split}
\end{equation*}
Applying this  on $C$ bounding $\mathbb{H}_-$ to $f(z)=\mathbf{k}(z)-x$, which has no pole in $\mathbb{H}_-$, we see that it also never vanishes in $\mathbb{H}_-$.
\end{proof}

\begin{figure}[h]
 \centering
 \includegraphics[width=\textwidth]{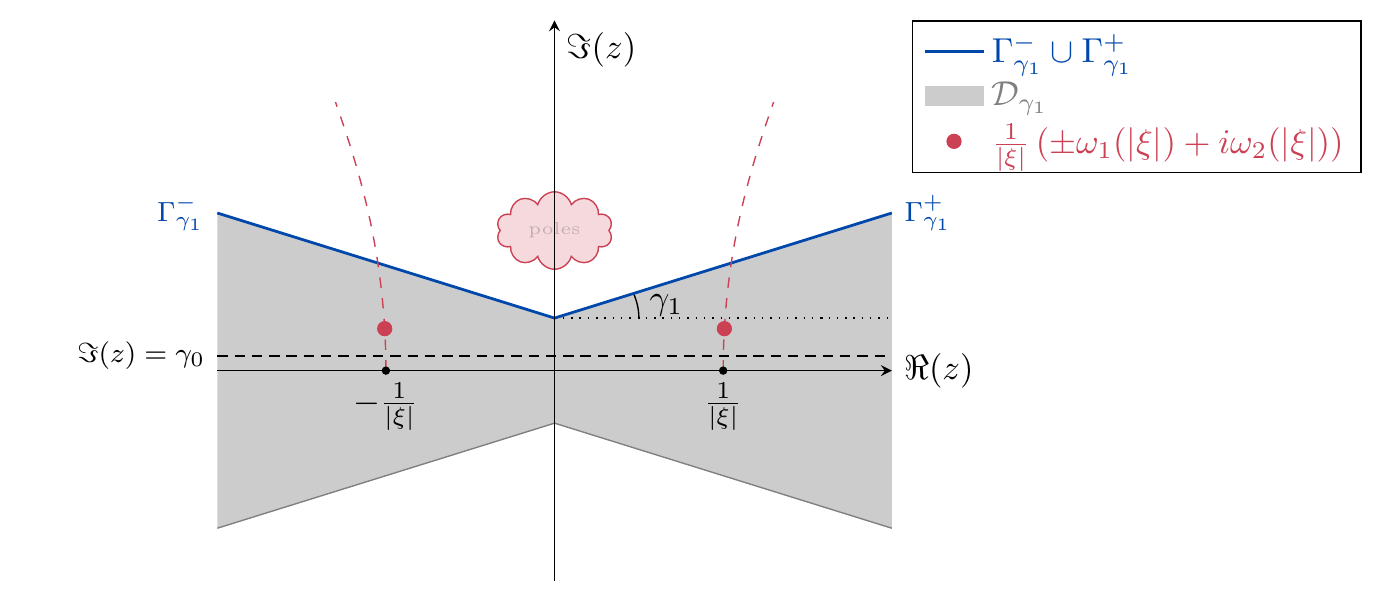}
 \caption{An overview of the integration contours and poles in the proof of Theorem \ref{MainThm}.}
 \label{fig:int_contours}
\end{figure}

In order to prove exponential decay we need to extend the function $\mathbf{k}$ to parts of the upper half plane. For this we recall that $m_0$ extends to an analytic function $m_0:\mathcal{D}_{\vartheta}\to\C$ with $\mathcal{D}_{\vartheta}$ as in \eqref{more10}. Using the Plemelj formula, we see that we can also extend ${\bf k}$ to a holomorphic function on $\mathcal{D}_{\vartheta}$,
\begin{equation}\label{DefkExtended}
\begin{split}
\mathbf{k}(z)&=
\begin{cases}
-\int_{-\infty}^\infty\frac{m_0^\prime(t)}{z-t}dt&\quad\hbox{ if }z\in\mathcal{D}_{\vartheta}\cap\mathbb{H}_-,\\
-\pi\mathcal{H}(m'_0)(x)-i\pi m_0^\prime(x)&\quad\hbox{ if }z=x\in\mathbb{R},\\
-\int_{-\infty}^\infty\frac{m_0^\prime(t)}{z-t}dt-2i\pi m_0^\prime(z)&\quad\hbox{ if }z\in\mathcal{D}_{\vartheta}\cap\mathbb{H}_+.
\end{cases}
\end{split}
\end{equation}

We will need some further bounds.

\begin{lemma}\label{LemG}

The function ${\bf k}$ defined in \eqref{DefkExtended} is analytic in $\mathcal{D}_{\vartheta}$ and satisfies the functional equations
\begin{equation}\label{FunctionalEqG}
\begin{split}
{\bf k}(-\overline{z})=\overline{{\bf k}(z)},\qquad {\bf k}(z)-\overline{{\bf k}(\overline{z})}=-2i\pi m_0^\prime(z),\qquad z\in\mathcal{D}_{\vartheta}.
\end{split}
\end{equation}
Moreover, with ${\bf k}(0)<0$ defined as in \eqref{kof0}, we have the following expansion
\begin{equation}\label{ExpansionG0}
\begin{split}
{\bf k}(z)&={\bf k}(0)-i\pi m_0''(0)z+O(|z|^2),\qquad z\in\mathcal{D}_{\vartheta},\,|z|\leq 1.
\end{split}
\end{equation}

We also have an expansion of $\mathbf{k}$ at $\infty$ in a smaller region: there is $\vartheta'\in(0,\vartheta)$ such that we can decompose
\begin{equation*}
\begin{split}
{\bf k}(z)={\bf k}^{eff}(z)-i\pi m_0^\prime(z),\qquad z\in\mathcal{D}_{\vartheta'}.
\end{split}
\end{equation*}
The function $\mathbf{k}^{eff}:\mathcal{D}_{\vartheta'}\to\C$ is analytic, satisfies the identities
\begin{equation}\label{more2}
{\bf k}^{eff}(z)={\bf k}^{eff}(-z)=\overline{{\bf k}^{eff}(\overline{z})}\qquad\text{ for }z\in\mathcal{D}_{\vartheta'},
\end{equation}
and has the expansion
\begin{equation}\label{ExpansionGInfinityGeneral}
\begin{split}
{\bf{k}}^{eff}(z)-z^{-2}&=O(|z|^{-d-1}+|z|^{-4}\log|z|),\\
|z|^j\big|\partial^j_z\big[{\bf{k}}^{eff}(z)-z^{-2}]\big|&\lesssim_j(|z|^{-d-1}+|z|^{-4}\log|z|),
\end{split}
\end{equation}
if $z\in\mathcal{D}_{\vartheta'}$, $|z|\geq 4$, and $j\geq 1$. In addition, in the thin-tail case we have the more precise expansion
\begin{equation}\label{ExpansionGInfinityTT}
\begin{split}
{\bf{k}}^{eff}(z)-z^{-2}-3a_2z^{-4}&=O_d(|z|^{-d-1}+|z|^{-6}\log|z|),\\
|z|^j\big|\partial^j_z\big[{\bf{k}}^{eff}(z)-z^{-2}-3a_2z^{-4}\big]\big|&\lesssim_{d,j}(|z|^{-d-1}+|z|^{-6}\log|z|),
\end{split}
\end{equation}
where, as before,
\begin{equation*}
a_2=\int_\R t^2m_0(t)\,dt.
\end{equation*}
\end{lemma}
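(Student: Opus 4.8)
The plan is to prove the four groups of claims in turn: (a) analyticity of ${\bf k}$ on $\mathcal{D}_\vartheta$ and the functional equations \eqref{FunctionalEqG}; (b) the expansion \eqref{ExpansionG0} at the origin; (c) the decomposition ${\bf k}={\bf k}^{eff}-i\pi m_0'$ together with the symmetries \eqref{more2}; and (d) the asymptotic expansions \eqref{ExpansionGInfinityGeneral}--\eqref{ExpansionGInfinityTT} at infinity. Essentially all the work lies in (d).

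For (a): on each of $\mathcal{D}_\vartheta\cap\mathbb{H}_\pm$ the corresponding integral in \eqref{DefkExtended} is holomorphic by differentiation under the integral sign (legitimate since $m_0'$ is holomorphic on $\mathcal{D}_\vartheta$ with $|m_0'(t)|\lesssim(1+|t|)^{-d-1}$ there, by \eqref{DefMaxg2}), and the three branches glue holomorphically across $\R$ by the Sokhotski--Plemelj jump formula: as $z$ crosses $\R$ upward, the pole of $w\mapsto m_0'(w)/(z-w)$ sweeps through the contour and produces exactly the residue $-2i\pi m_0'(z)$ distinguishing the first and third lines of \eqref{DefkExtended}. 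The first identity in \eqref{FunctionalEqG} holds on $\overline{\mathbb{H}_-}$ by Proposition \ref{PropG1} and extends to $\mathcal{D}_\vartheta$ by uniqueness of analytic continuation; the jump identity is then immediate from \eqref{DefkExtended} for $z\in\mathbb{H}_+$ (using that $m_0'$ is real on $\R$) and again extends analytically. In particular $\mathbf{k}(-z)=\mathbf{k}(z)+2i\pi m_0'(z)$, so ${\bf k}^{eff}(z):={\bf k}(z)+i\pi m_0'(z)=\tfrac12(\mathbf{k}(z)+\mathbf{k}(-z))$ is the even part of ${\bf k}$, is holomorphic on $\mathcal{D}_\vartheta$, and satisfies \eqref{more2} by combining ${\bf k}^{eff}(-z)={\bf k}^{eff}(z)$ with ${\bf k}^{eff}(\overline z)=\overline{{\bf k}^{eff}(z)}$ (the latter from $m_0(\overline z)=\overline{m_0(z)}$); we restrict to $\mathcal{D}_{\vartheta'}$, $0<\vartheta'<\vartheta$, only so that the $L^\infty$ bounds derived below on an intermediate strip $\mathcal{D}_{\vartheta''}$ upgrade to the derivative bounds in \eqref{ExpansionGInfinityGeneral}--\eqref{ExpansionGInfinityTT} via the Cauchy integral formula. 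Claim (b) is just the second order Taylor expansion of ${\bf k}$, now holomorphic near $0\in\C$, with ${\bf k}(0)$ and ${\bf k}'(0)=-i\pi m_0''(0)$ read off from \eqref{kof0}--\eqref{kof0exp}.

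For (d): by the symmetry ${\bf k}^{eff}(\overline z)=\overline{{\bf k}^{eff}(z)}$ and the reality of the subtracted polynomials, it suffices to treat $z\in\mathcal{D}_{\vartheta'}\cap\mathbb{H}_-$ with $|z|\ge 4$ (which forces $|\Re z|\sim|z|$), the real boundary following by continuity. There ${\bf k}^{eff}(z)=-\int_\R m_0'(t)(z-t)^{-1}\,dt+i\pi m_0'(z)$, and the last term is $O(|z|^{-d-1})$, hence admissible. In the integral we deform $\R$ to a contour $\Gamma\subset\mathcal{D}_\vartheta$ which coincides with $\R$ for $|\Re w-\Re z|>|z|$, and for $|\Re w-\Re z|\le|z|$ is symmetric under $w\mapsto-w$ and pushed up to height $\sim\vartheta|z|$, so that $\mathrm{dist}(z,\Gamma)\gtrsim|z|$ uniformly; since $\Im z<0$ and the bump goes up no pole is crossed, and since $t^km_0'(t)\to 0$ at $\pm\infty$ for $k<d$ the moment identities $\int_\Gamma w^km_0'\,dw=\int_\R t^km_0'\,dt$ persist. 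Writing $(z-w)^{-1}=\sum_{k=0}^{N-1}w^kz^{-k-1}+w^Nz^{-N}(z-w)^{-1}$, the even-$k$ contributions to $-\int_\Gamma m_0'(w)w^kz^{-k-1}\,dw$ vanish by oddness of $w^km_0'(w)$, while for odd $k$ the term equals $(k\,a_{k-1})z^{-k-1}$ with $a_{k-1}=\int_\R t^{k-1}m_0\,dt$; in particular $a_0=1$ yields the leading $z^{-2}$ and $a_2$ yields $3a_2z^{-4}$. We take $N=2$ in general (so only $z^{-2}$ is extracted, valid for all $d>1$ since $\int tm_0'=-1$ needs only $d>1$) and $N=4$ in the thin-tail case (so that $3a_2z^{-4}$ is extracted cleanly, using $a_2<\infty$). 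The remainder $-z^{-N}\int_\Gamma w^Nm_0'(w)(z-w)^{-1}\,dw$ is then estimated by splitting $\Gamma$ into $|w|\le|z|/2$ (where one Taylor-expands the kernel one step further and uses oddness of the leading moment), $|z|/2<|w|\le 2|z|$ (controlled by $\mathrm{dist}(z,\Gamma)\gtrsim|z|$), and $|w|>2|z|$ (where $|z-w|\gtrsim|w|$); propagating $|m_0'(w)|\lesssim(1+|w|)^{-d-1}$ through these pieces reproduces exactly the stated exponents, the factors $|z|^{-4}\log|z|$ and $|z|^{-6}\log|z|$ appearing at the borderline decay rates $d=3$ and $d=5$ where $a_2$, resp.\ $a_4$, is log-divergent. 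The derivative bounds follow from these function bounds by the Cauchy integral formula on disks of radius $\sim|z|$, which fit inside $\mathcal{D}_{\vartheta''}$ because the strip widens at rate $\vartheta''>\vartheta'$.

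The main obstacle is the construction and exploitation of the deformed contour $\Gamma$ in (d): it must simultaneously remain inside $\mathcal{D}_\vartheta$, stay at distance $\gtrsim|z|$ from $z$ uniformly over $z\in\mathcal{D}_{\vartheta'}$, $|z|\ge 4$, retain enough symmetry to annihilate the spurious ``middle'' moments, and preserve the moment identities. The subsequent split-and-count of the remainder, with the delicate logarithmic losses at $d=3,5$, is the only genuinely technical step; the functional equations, the expansion at $0$, and the passage from function bounds to derivative bounds are routine.
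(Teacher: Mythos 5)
Your handling of the functional equations, the expansion at $0$, the identification of ${\bf k}^{eff}$ as the even part of ${\bf k}$, the reduction by symmetry to $\mathcal{D}_{\vartheta'}\cap\mathbb{H}_-$, and the Cauchy-formula upgrade from function bounds on an intermediate region to derivative bounds all coincide with the paper. The genuine difference is in the expansion at infinity. The paper never leaves the real line: it symmetrizes to $-\int_0^\infty 2t m_0'(t)(z^2-t^2)^{-1}dt$, splits into $t\in[0,x/2]$, $[x/2,2x]$, $[2x,\infty)$, Taylor-expands only on the inner interval (where truncated even moments vanish by oddness of $t\mapsto t^k m_0'(t)$), keeps the original decaying kernel on the far tail, and treats the near-resonant middle interval by separating the real and imaginary parts of $(t-z)^{-1}$ and using $|m_0'(t)|\lesssim |z|^{-d-1}$ there; the thin-tail refinement is obtained by extracting one more term from the inner integral only. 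You instead deform the contour into the upper part of $\mathcal{D}_\vartheta$ so that $\mathrm{dist}(z,\Gamma)\gtrsim|z|$, expand the Cauchy kernel, and identify the extracted coefficients with full real-line moments via Cauchy's theorem. Your route avoids the delicate near-singular kernel estimate entirely and makes the moment structure (vanishing even moments, the $3a_2 z^{-4}$ term, the higher-order statement at the end of Theorem \ref{MainThm}) completely transparent; the paper's route needs no contour construction beyond the already-established extension of ${\bf k}$, and because it only expands on $[0,x/2]$ it is automatically safe at the borderline exponents.

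Two points in your version need repair, both fixable. First, the contour as described is inconsistent: a modification window centered at $\Re z$ of half-width $|z|$ cannot be ``symmetric under $w\mapsto-w$'' (that map sends the upper half-plane to the lower), and if the bump extends to $\Re w\approx 0$ it cannot sit at height $\sim\vartheta|z|$ inside $\mathcal{D}_\vartheta$, whose width there is only $\approx\vartheta$. The correct fix is to confine the deformation to, say, $\Re w\in[\Re z-|z|/2,\Re z+|z|/2]$, so that $\Gamma\cap\{|w|\le|z|/4\}$ is a genuine symmetric real interval on which the truncated even moment $\int t^2m_0'\,dt$ vanishes exactly; this exact cancellation is essential for $1<d\le 2$, where $\int_{|w|\ge c|z|}|w|^2|m_0'(w)|\,|dw|$ diverges and no complement/tail argument substitutes for it. Second, at the thin-tail borderline $d=3$ your outer remainder $\int_{|w|>2|z|}|w|^4|m_0'(w)||z-w|^{-1}|dw|$ is divergent if one uses only the pointwise bound $|m_0'(w)|\lesssim(1+|w|)^{-4}$; you need the additional fact $\int_1^\infty t^3|m_0'(t)|\,dt\le m_0(1)+3a_2<\infty$ (integration by parts plus $m_0'\le 0$ and $a_2<\infty$), the same input the paper tacitly relies on when it extends its truncated third moment to the full line.
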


\begin{remark}
The formulas \eqref{ExplicitkGP} show that 
\begin{equation*}
\begin{split}
{\bf k}_1(z)&=\frac{z^2-1}{(1+z^2)^2}+i\frac{2z}{(1+z^2)^2},\qquad{\bf k}_2(z)=\frac{z^4+6z^2-3}{(1+z^2)^3}+i\frac{8z}{(1+z^2)^3},
\end{split}
\end{equation*}
where the first term corresponds to ${\bf k}^{eff}$ and the second term corresponds to $-i\pi m'_0(z)$.
\end{remark}

\begin{proof}[Proof of Lemma \ref{LemG}] The analyticity of the function $\mathbf{k}$ and the identities \eqref{FunctionalEqG} follow directly from the definition \eqref{DefkExtended}. The validity of the expansion \eqref{ExpansionG0} follows as in \eqref{kof0exp}.

To prove the expansion at infinity we define 
\begin{equation}\label{more1}
{\bf k}^{eff}(z):={\bf k}(z)+i\pi m_0^\prime(z).
\end{equation}
The identities \eqref{FunctionalEqG} show that, for any $z\in\mathcal{D}_{\vartheta}$
\begin{equation*}
{\bf k}^{eff}(-\overline{z})-\overline{{\bf k}^{eff}(z)}=0,\qquad {\bf k}^{eff}(z)-\overline{{\bf k}^{eff}(\overline{z})}=0.
\end{equation*}
The identities \eqref{more2} follow. Using the formulas \eqref{DefkExtended}, for $z\in\mathcal{D}_{\vartheta'}\cap\mathbb{H}_-$, $|z|\geq 3$, we calculate
\begin{equation*}
\begin{split}
{\bf k}^{eff}(z)&=-\int_{-\infty}^\infty\frac{m_0^\prime(t)}{z-t}dt+i\pi m_0^\prime(z)=-\int_{0}^\infty\frac{2tm_0^\prime(t)}{z^2-t^2}dt+i\pi m_0^\prime(z).
\end{split}
\end{equation*}
Assume that $z=x+iy$, $x\geq 2$, $-|x|/8\leq y<0$, and decompose
\begin{equation}\label{more4}
\begin{split}
&{\bf k}^{eff}(z)=I_1(z)+I_2(z)+I_3(z),\\
&I_1(z):=\int_{0}^{x/2}\frac{2tm_0^\prime(t)}{t^2-z^2}dt,\qquad I_2(z):=\int_{2x}^\infty\frac{2tm_0^\prime(t)}{t^2-z^2}\,dt,\\
&I_3(z):=\int_{x/2}^{2x}\frac{2tm_0^\prime(t)}{t^2-z^2}dt+i\pi m_0^\prime(z).
\end{split}
\end{equation}
Since $|m'_0(t)|\lesssim (1+t)^{-d-1}$, $\int_0^\infty -2tm'_0(t)\,dt=\int_\R m_0(t)\,dt=1$, and $x\approx |z|$ we can expand
\begin{equation*}
\begin{split}
I_1(z)&=\int_{0}^{x/2}\frac{2tm_0^\prime(t)}{-z^2}dt+O\Big(|z|^{-4}\int_{0}^{x/2}(1+t)^3|m_0^\prime(t)|\,dt\Big)\\
&=\frac{-1}{z^2}\int_{0}^{\infty}2tm_0^\prime(t)\,dt+O\big(|z|^{-4}[\log |z|+|z|^{-d+3}]\big)\\
&=\frac{1}{z^2}+O\big(|z|^{-4}[\log |z|+|z|^{-d+3}]\big),
\end{split}
\end{equation*}
\begin{equation*}
|I_2(z)|\lesssim\int_{2x}^{\infty}\frac{t|m_0^\prime(t)|}{t^2}\,dt\lesssim x^{-d-1}\lesssim |z|^{-d-1},
\end{equation*}
\begin{equation*}
\begin{split}
|I_3(z)|&\lesssim\Big|\int_{x/2}^{2x}\frac{tm_0^\prime(t)}{t+x+iy}\,\,\frac{t-x}{(t-x)^2+y^2}\,dt\Big|+\int_{x/2}^{2x}\frac{t|m_0^\prime(t)|}{x}\,\,\frac{|y|}{(t-x)^2+y^2}\,dt+|m_0^\prime(z)|\\
&\lesssim |z|^{-d-1}.
\end{split}
\end{equation*}
The desired expansion \eqref{ExpansionGInfinityGeneral} follows uniformly when $z\in\mathcal{D}_{\vartheta}\cap\mathbb{H}_-$. We then use the identities \eqref{more2} to prove the validity of the expansion for all $z\in\mathcal{D}_{\vartheta}$. Finally, we use the Cauchy integral formula to prove the bounds in \eqref{ExpansionGInfinityGeneral} for the derivatives $\partial_z^j({\mathbf{k}}^{eff}(z)-z^{-2})$. 

The more precise expansion \eqref{ExpansionGInfinityTT} follows in the same way, by extracting one more term from the integral $I_1(z)$.
\end{proof}

\subsection{Proof of Theorem \ref{MainThm} (i)} The formulas \eqref{Kernels2} and \eqref{DefK} show that
\begin{equation}\label{more5}
\begin{split}
\widehat{G}(\xi,\tau)-\delta_0(\tau)&=-\mathfrak{1}_{[0,\infty)}(\tau)\frac{1}{2\pi}\int_{\R}\frac{K(\xi,\theta)}{1+K(\xi,\theta)}e^{i\theta\tau}\,d\theta\\
&=\mathfrak{1}_{[0,\infty)}(\tau)\frac{|\xi|}{2\pi}\int_{\R}\frac{\mathbf{k}(\theta)}{|\xi|^2-\mathbf{k}(\theta)}e^{i\theta(\tau|\xi|)}\,d\theta.
\end{split}
\end{equation}
For high frequencies $\vert\xi\vert\geq r_0/2$ it follows from Proposition \ref{PropG1} that $\big||\xi|^2-\mathbf{k}(\theta)\big|\gtrsim 1$ for any $\theta\in\R$ and it follows from Lemma \ref{LemG} that $\vert{\bf k}\vert\to0$ as $\vert z\vert\to\infty$ in $\mathcal{D}_{\vartheta^\prime}$. Therefore we can move the contour of integration to the line $\theta\in i\gamma_0+\R$, for $\gamma_0>0$ sufficiently small (to get the exponential decay), and the bounds \eqref{GFHF2} follow easily.

\subsection{Proof of Theorem \ref{MainThm} (ii)}

We consider now the harder case of low frequencies $\vert\xi\vert\ll 1$. We would still like to use the formula \eqref{more5}. However, the function $z\to |\xi|^2-\mathbf{k}(z)$ vanishes at two points $\zeta$ and $-\overline{\zeta}$ close to the real axis (see Figure \ref{fig:scatter} below). Our goal is to move the contour of integration to the boundary of an angular sector in $\mathbb{H}_+$ (in order to gain the exponential factor in front of the error term $\mathcal{E}_l$) and quantify the contributions of the residues.

\begin{figure}[h]
 \centering
 \includegraphics[width=0.5\textwidth]{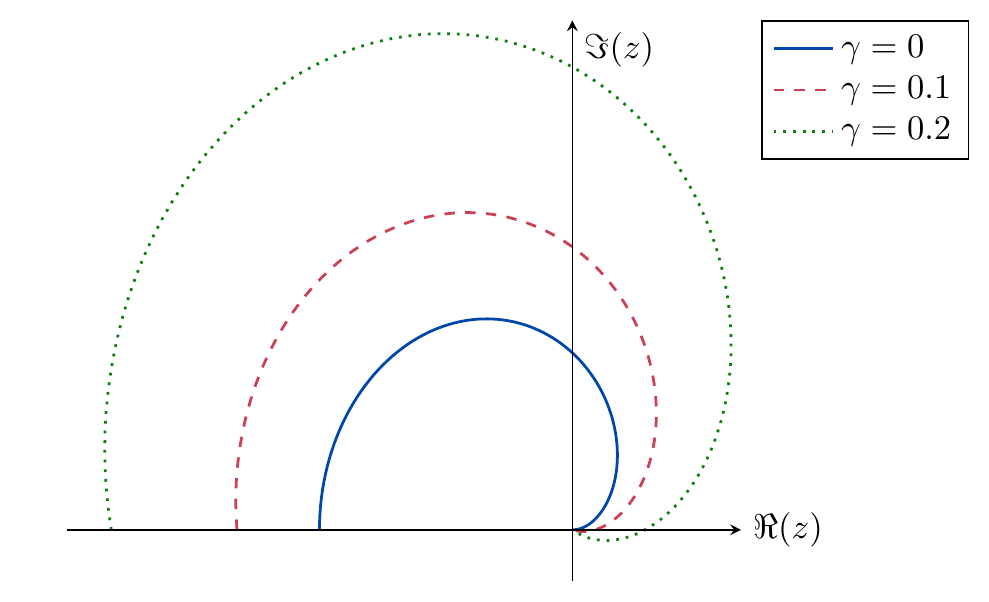}
 \caption{The curve ${\bf k}(\Gamma^+_\gamma) $ for various values of $\gamma$ for the first generalized Poisson equilibrium ${\bf k}_{2}$}
 \label{fig:scatter}
\end{figure}

To make the bounds precise, we use the expansions in Lemma \ref{LemG} and fix a constant $A\geq 1$ such that for any $z\in\mathcal{D}_{\vartheta}$ we have
\begin{equation}\label{more12}
\begin{split}
&|\mathbf{k}(z)|\leq A(1+|z|)^{-2},\\
&|\mathbf{k}(z)-1/z^2|+|m'_0(z)|\leq A[|z|^{-d-1}+|z|^{-4}\log(|z|)],\qquad\text{ if }|\Re z|\geq 4.
\end{split}
\end{equation}
Since $\mathbf{k}(\R)\subseteq\C\setminus [0,\infty)$ (due to Proposition \ref{PropG1}) there is a small constant $\gamma^\ast\in(0,\vartheta/10)$ such that
\begin{equation}\label{more12.5}
|\mathbf{k}(w)-b|\geq\gamma^\ast\,\,\text{ for any }\,\,b\in[0,\infty)\,\,\text{ and }\,\,w\in\{z\in\C:\,|\Re z|\leq 10A,\,|\Im z|\leq\gamma^\ast\}.
\end{equation}

We will prove the following lemma:

\begin{lemma}\label{LemContourSmallXi}
(i) There are small numbers $\gamma_1,r_1>0$ (depending only on the constants $A$ and $\gamma^\ast$ fixed earlier) such that if $r\in(0,r_1]$ then the function 
\begin{equation}\label{more11}
z\to r^2-\mathbf{k}(z)
\end{equation}
vanishes (of order $1$) at exactly two points $\zeta(r)$ and $-\overline{\zeta(r)}$ in the region $\mathcal{D}_{\gamma_1}$. Moreover
\begin{equation}\label{more11.5}
|r^2-\mathbf{k}(z)|\gtrsim r^2+|z|^{-2}\qquad\text { if }z\in\mathcal{D}_{2\gamma_1}\setminus\mathcal{D}_{\gamma_1/2}.
\end{equation}

(ii) For any $r\in(0,r_1]$ and $a\geq 1$ we have
\begin{equation}\label{more11.1}
|\zeta(r)-1/r|\lesssim r^{d-2}+r\log(1/r)\qquad\text {and }\qquad r^a\big|\partial_r^a[\zeta(r)-1/r]\big|\lesssim_a r^{d-2}+r\log(1/r).
\end{equation}
Moreover, in the thin-tail case we have the more precise expansion
\begin{equation}\label{more11.3}
\begin{split}
&|\zeta(r)-1/r-3a_2r/2|\lesssim_d r^{d-2}+r^3\log(1/r),\\
&r^a\big|\partial_r^a[\zeta(r)-1/r-3a_2r/2]\big|\lesssim_{d,a} r^{d-2}+r^3\log(1/r),\qquad a\geq 1,
\end{split}
\end{equation}
where $a_2$ is defined as in \eqref{a2def}.

(iii) In addition, letting $\zeta=\zeta_1+i\zeta_2$, we have
\begin{equation}\label{more11.2}
|\zeta_1(r)-1/r|\lesssim r^{d-2}+r\log(1/r),\qquad \Big|\zeta_2(r)+\frac{\pi m'_0(\zeta_1(r))}{2r^3}\Big|\lesssim (r+r^{d-1})|\zeta_2(r)|.
\end{equation}
In particular, $\zeta_2(r)\approx -m'_0(\zeta_1(r))r^{-3}>0$. 
\end{lemma}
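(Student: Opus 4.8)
The plan is to construct the zeros $\zeta(r)$ of $z\mapsto r^2-\mathbf{k}(z)$ in the upper half-plane region $\mathcal{D}_{\gamma_1}$ by a fixed-point argument based on the expansion $\mathbf{k}(z)=\mathbf{k}^{eff}(z)-i\pi m_0'(z)$ from Lemma \ref{LemG}, and then to extract the claimed asymptotics by iterating that expansion. First, observe that for $|z|\gtrsim 1/r$ with $z\in\mathcal{D}_{\vartheta'}$ we have $\mathbf{k}(z)\approx z^{-2}$ by \eqref{ExpansionGInfinityGeneral}, so the equation $r^2=\mathbf{k}(z)$ should have a solution near $z=1/r$, where indeed $\mathbf{k}(1/r)\approx r^2$. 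To make this rigorous I would rewrite the equation as $z^2=\mathbf{k}(z)^{-1}$, i.e.\ $z=\pm\mathbf{k}(z)^{-1/2}$ (choosing the branch with $\Re z>0$), and set up the map $\Phi(z):=\mathbf{k}(z)^{-1/2}$ on a small ball $\{|z-1/r|\le Cr^{-1}(r^{d-1}+r^2\log(1/r))\}$ inside $\mathcal{D}_{\gamma_1}$; using \eqref{ExpansionGInfinityGeneral} (and \eqref{DefMaxg2} for the derivative bound on $m_0'$, which controls $\mathbf{k}'$ via \eqref{more1}) one checks $\Phi$ is a contraction there, yielding a unique fixed point $\zeta(r)$. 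The second root is then $-\overline{\zeta(r)}$ by the functional equation $\mathbf{k}(-\overline{z})=\overline{\mathbf{k}(z)}$ in \eqref{FunctionalEqG}, and uniqueness in all of $\mathcal{D}_{\gamma_1}$ (order one, no other zeros) follows from the lower bound \eqref{more11.5}, which in turn is obtained by splitting into the regime $|z|\le \gamma^\ast$-type bounded region (use \eqref{more12.5} with $b=r^2$) and the regime $|z|$ large (use $|\mathbf{k}(z)-z^{-2}|$ small from \eqref{more12} together with $|r^2-z^{-2}|\gtrsim r^2+|z|^{-2}$ when $z$ stays away from the circle $|z|=1/r$, which is exactly the content of excluding $\mathcal{D}_{\gamma_1/2}$ near that circle — more precisely one shows the only near-cancellation happens in a neighborhood of $\pm\zeta(r)$).

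For part (ii), I would feed the leading behavior $\zeta(r)=1/r+O(r^{d-2}+r\log(1/r))$ back into the equation: writing $\zeta=1/r+w$ and using $\mathbf{k}^{eff}(z)=z^{-2}+O(|z|^{-d-1}+|z|^{-4}\log|z|)$ gives $r^2=(1/r+w)^{-2}+O(r^{d+1}+r^4\log(1/r))-i\pi m_0'(1/r+w)$; since $m_0'$ decays like $(1+|z|)^{-d-1}$ by \eqref{DefMaxg2}, the term $i\pi m_0'(\zeta)$ is $O(r^{d+1})$, and expanding $(1/r+w)^{-2}=r^2(1-rw+\dots)^{... }$ — more cleanly, $(1/r+w)^{-2}=r^2-2r^3w+O(r^4|w|^2)$ — lets one solve for $w$ and deduce the first bound in \eqref{more11.1}; the derivative bounds come from differentiating the fixed-point identity in $r$ and using the same estimates, or equivalently from Cauchy estimates applied to $r\mapsto\zeta(r)$ viewed as analytic (it is the composition of analytic maps). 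The thin-tail refinement \eqref{more11.3} is the same computation but retaining the next term $3a_2 z^{-4}$ from \eqref{ExpansionGInfinityTT}: then $r^2=(1/r+w)^{-2}+3a_2(1/r+w)^{-4}+\dots$ forces $w=3a_2 r/2+O(r^{d-2}+r^3\log(1/r))$.

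For part (iii), separate real and imaginary parts of $r^2-\mathbf{k}(\zeta)=0$ using the second functional equation in \eqref{FunctionalEqG}, namely $\mathbf{k}(z)-\overline{\mathbf{k}(\overline z)}=-2i\pi m_0'(z)$, equivalently $\Im\mathbf{k}^{eff}=0$ on the real axis and the imaginary part of $\mathbf{k}$ near the real axis is governed by $-\pi m_0'$. Concretely, since $\zeta_2$ is small, Taylor expand $\mathbf{k}(\zeta_1+i\zeta_2)=\mathbf{k}(\zeta_1)+i\zeta_2\mathbf{k}'(\zeta_1)+O(\zeta_2^2\sup|\mathbf{k}''|)$; the real part equation gives $r^2=\Re\mathbf{k}(\zeta_1)+O(\zeta_2^2)$ which combined with $\Re\mathbf{k}(\zeta_1)\approx \zeta_1^{-2}$ yields $|\zeta_1-1/r|\lesssim r^{d-2}+r\log(1/r)$ just as before, while the imaginary part equation gives $0=\Im\mathbf{k}(\zeta_1)+\zeta_2\Re\mathbf{k}'(\zeta_1)+O(\zeta_2^2|\mathbf{k}''|+\zeta_2|\Im\mathbf{k}'|)$. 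Now $\Im\mathbf{k}(\zeta_1)=-\pi m_0'(\zeta_1)$ (for $\zeta_1$ real, from \eqref{kRealAxis}, noting $\mathcal{H}(m_0')$ is real), and $\Re\mathbf{k}'(\zeta_1)\approx -2\zeta_1^{-3}\approx -2r^3$, so solving for $\zeta_2$ produces $\zeta_2=-\pi m_0'(\zeta_1)/(2\zeta_1^{-3})\cdot(1+\text{lower order})= \pi m_0'(\zeta_1)\zeta_1^3/2\cdot(1+\dots)$; expressing $\zeta_1^{-3}$ via $r$ and tracking that the error terms are $O((r+r^{d-1})|\zeta_2|)$ gives \eqref{more11.2}, and positivity $\zeta_2\approx -m_0'(\zeta_1)r^{-3}>0$ follows since $m_0'<0$ on $(0,\infty)$ by Definition \ref{DefinitionEquilibrium}(ii). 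The main obstacle I anticipate is not any single estimate but the bookkeeping of the two competing small scales — the $r^{d-1}$ (tail) versus $r^2\log(1/r)$ (the $|z|^{-4}\log|z|$ remainder in $\mathbf{k}^{eff}$) contributions — and in particular verifying the contraction and the lower bound \eqref{more11.5} uniformly across the crossover region $|z|\sim 1/r$, where one must carefully use that the two zeros $\pm\zeta(r)$ are the \emph{only} places the denominator degenerates; getting the constants $\gamma_1,r_1$ to depend only on $A,\gamma^\ast$ (as asserted) requires doing the large-$|z|$ analysis purely with the quantitative bounds \eqref{more12} rather than qualitative analyticity.
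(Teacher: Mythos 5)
Your parts (ii) and (iii) essentially track the paper's own argument (perturbative expansion of $\zeta(r)$ around $1/r$ using $\mathbf{k}(z)=z^{-2}(1+O(|z|^{-d+1}+|z|^{-2}\log|z|))$, then splitting real and imaginary parts via $\mathbf{k}=\mathbf{k}^{eff}-i\pi m_0'$, with only a harmless sign slip in your intermediate formula for $\zeta_2$). The genuine gap is in part (i), in the construction of the root. The map you propose, $\Phi(z):=\mathbf{k}(z)^{-1/2}$, does not do what you want: the equation $r^2=\mathbf{k}(z)$ is \emph{not} equivalent to $z^2=\mathbf{k}(z)^{-1}$ (that substitution replaces $r^{-2}$ by $z^2$, which is only legitimate at the exact root), so a fixed point of your $\Phi$ would solve $\mathbf{k}(z)=z^{-2}$, i.e.\ $L(z)=0$, not the dispersion relation. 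Worse, since $\mathbf{k}(z)^{-1/2}=z(1+L(z))^{-1/2}$ with $L$ small near $|z|\sim 1/r$, one has $\Phi'(z)=1+O(r^{d-1}+r^2\log(1/r))$, so $\Phi$ is a near-identity and cannot be a contraction on any ball; the Banach fixed point argument collapses at both the ``right equation'' and the ``contraction'' stage. A correct fixed-point formulation keeps $r$ explicitly, e.g.\ $z=r^{-1}(1+L(z))^{1/2}$, or equivalently the paper's substitution $\zeta(r)=(1+\delta(r))/r$ leading to $2\delta+\delta^2=L((1+\delta)/r)$ as in \eqref{more19.2}--\eqref{more19.3}, where the smallness of $L$ and $zL'$ gives a genuine contraction for $\delta$.

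Even after fixing the map, your part (i) is still incomplete on uniqueness and the zero count: the lower bound \eqref{more11.5} only covers $\mathcal{D}_{2\gamma_1}\setminus\mathcal{D}_{\gamma_1/2}$, and the bound \eqref{more13}-type argument covers $|\Re z|\notin[1/(2r),2/r]$, so you must still exclude zeros in the inner region near $|\Re z|\sim 1/r$ but outside your small ball; you assert ``the only near-cancellation happens in a neighborhood of $\pm\zeta(r)$'' without an argument. This can be patched quantitatively (on that region $|r^2-z^{-2}|\approx r^3|z-1/r|$ dominates $|\mathbf{k}(z)-z^{-2}|\lesssim A(|z|^{-d-1}+|z|^{-4}\log|z|)$ once $|z-1/r|\geq C(r^{d-2}+r\log(1/r))$, so you need the sharp error bound from \eqref{more12}, not just $O(|z|^{-3})$), but the paper's route is cleaner and is what you are implicitly reconstructing: apply Rouch\'e's theorem in the trapezoid $\mathcal{T}_r=\{z\in\mathcal{D}_{2\gamma_1}:\Re z\in[1/(2r),2/r]\}$ with comparison function $g(z)=r^2-z^{-2}$, which simultaneously yields existence, simplicity, uniqueness in $\mathcal{T}_r$, and (with the symmetry \eqref{FunctionalEqG}) the second root $-\overline{\zeta(r)}$, after which the contraction in the $\delta$-variable pins down the asymptotics. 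I recommend you restructure part (i) along one of these two lines; as written it does not establish the existence of $\zeta(r)$.
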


\begin{figure}[h]
 \centering
 \includegraphics[width=0.6\textwidth]{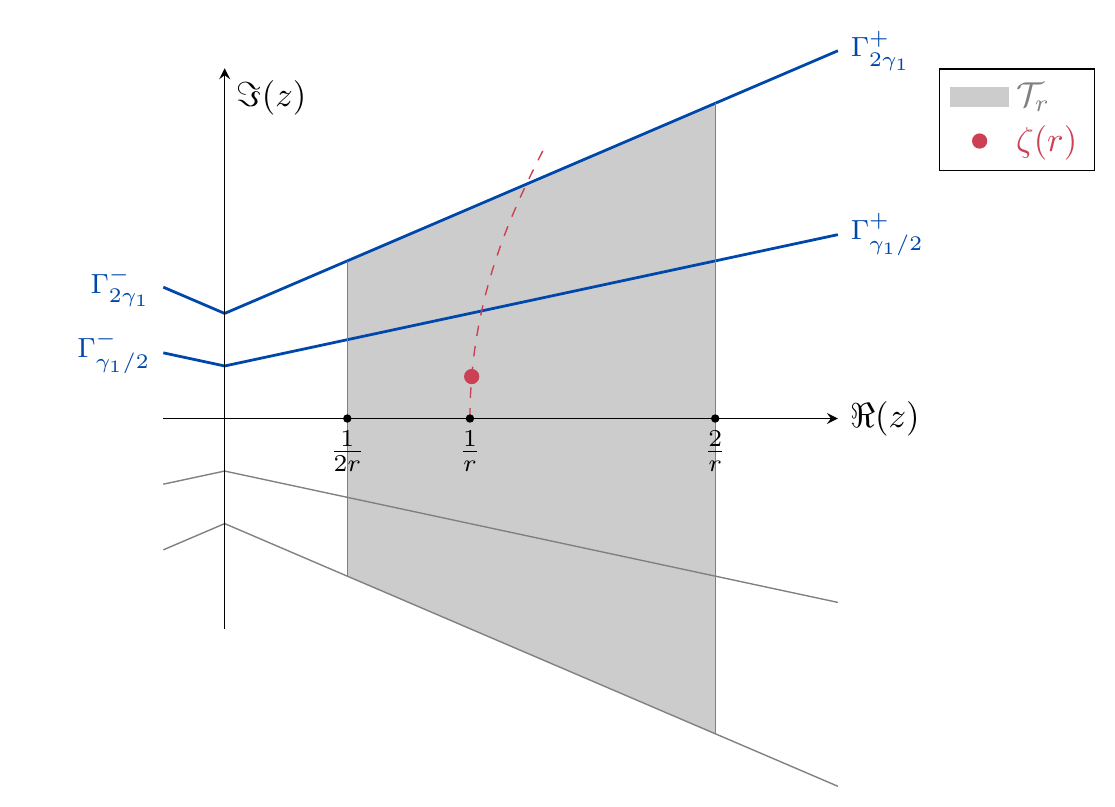}
 \caption{The trapezoid $\mathcal{T}_r$ used via Rouch\'e's Theorem in the proof of Lemma \ref{LemContourSmallXi}.}
 \label{fig:Rouche_contours}
\end{figure}

\begin{proof}  (i) The implied constants in this lemma may depend only on the constants $d-1>0$, $A$, and $\gamma^\ast$. We set $\gamma_1:=\gamma^\ast(1+10A)^{-1}$ and $r_1=r_1(A,\gamma^\ast)$ sufficiently small. 

Using \eqref{more12} we have
\begin{equation}\label{more13}
|r^2-\mathbf{k}(z)|\gtrsim r^2+|z|^{-2}\qquad\text{ if }r\in (0,r_1],\,z\in\mathcal{D}_{2\gamma_1},\text{ and }|\Re z|\notin[1/(2r),2/r].
\end{equation}

To analyze the remaining region we would like to use Rouch\'{e}'s theorem in the trapezoid $\mathcal{T}_r:=\{z\in\mathcal{D}_{2\gamma_1}:\,\Re z\in[1/(2r),2/r]\}$ -- see Figure \ref{fig:Rouche_contours}. Let $g(z):=r^2-z^{-2}$. Using \eqref{more12} we have
\begin{equation*}
\big|g(z)-(r^2-\mathbf{k}(z))\big|\leq A\big(|z|^{-3}+|z|^{-d-1}\big)\leq 20A(r^3+r^{d+1}),\qquad \text{ if }z\in\mathcal{T}_r.
\end{equation*}
On the other hand, if $z\in\mathcal{D}_{2\gamma_1}\setminus\mathcal{D}_{\gamma_1/2}$ then
\begin{equation*}
|g(z)|\geq r|r-z^{-1}|\geq (r^2/4)|rz-1|\geq \gamma_1r^2/20.
\end{equation*}
The function $g$ vanishes (of order $1$) at exactly one point $z=1/r\in\mathcal{T}_r$. Therefore, the conclusions in part (i) follow using  Rouch\'{e}'s theorem in the trapezoid $\mathcal{T}_r$, the bounds \eqref{more13}, and the identity $\mathbf{k}(-\overline{z})=\overline{\mathbf{k}(z)}$ in \eqref{FunctionalEqG}.

(ii) We analyze now the dispersion curve $\zeta(r)\in\mathcal{T}_r$ defined by the identity 
\begin{equation}\label{more14}
r^2=\mathbf{k}(\zeta(r)).
\end{equation}
To prove the estimates \eqref{more11.1} we decompose 
\begin{equation*}
\zeta(r)=(1+\delta(r))/r,\qquad {\bf k}(z)=z^{-2}(1+L(z)). 
\end{equation*}
The equation ${\bf k}(\zeta(r))=r^2$ becomes
\begin{equation}\label{more19.2}
2\delta(r)+\delta(r)^2=L((1+\delta(r))/r).
\end{equation}
The function $L$ is analytic in the region $\{z\in\mathcal{D}_{\vartheta'}:\,|\Re z|\geq 10A\}$ and satisfies
\begin{equation}\label{more19.3}
|z|^j\big|\partial_z^jL(z)\big|\lesssim_j |z|^{-d+1}+|z|^{-2}\log|z|,\qquad j\geq 0,
\end{equation}
in this region, using Lemma \ref{LemG} and the Cauchy integral formula. A simple fixed point argument then shows that the equation \eqref{more19.2} admits a unique solution $\delta(r)$ with $|\delta(r)|\lesssim r^{d-1}+r^2\log (1/r)$. The full estimates \eqref{more11.1} then follow using again \eqref{more19.2}--\eqref{more19.3} and taking $r\partial_r$ derivatives.

The bounds \eqref{more11.3} are similar. Indeed, in the thin tail case, we decompose 
\begin{equation*}
\zeta(r)=\frac{1+3a_2r^2/2+\delta_2(r)}{r}\qquad {\bf k}(z)=z^{-2}(1+3a_2z^{-2}+L_2(z)),
\end{equation*}
and the equation ${\bf k}(\zeta(r))=r^2$ becomes
\begin{equation}\label{more19.5}
2\delta_2(r)+3a_2r^2\delta_2(r)+\delta_2(r)^2+9a_2^2r^4/4=L_2(\zeta(r))+3a_2[\zeta(r)^{-2}-r^2].
\end{equation}
The function $L_2$ is analytic in the region $\{z\in\mathcal{D}_{\vartheta}:\,|\Re z|\geq 10A\}$ and satisfies
\begin{equation}\label{more19.6}
|z|^j\big|\partial_z^jL_2(z)\big|\lesssim_j |z|^{-d+1}+|z|^{-4}\log|z|,\qquad j\geq 0,
\end{equation}
in this region, due to \eqref{ExpansionGInfinityTT}. Since we already know that $|\delta_2(r)|\lesssim r$ it follows from \eqref{more19.5} that $|\delta_2(r)|\lesssim_d r^{d-1}+r^4\log(1/r)$. The bounds \eqref{more11.3} follow from \eqref{more19.5}--\eqref{more19.6}, by taking $r\partial_r$ derivatives.

(iii) To prove \eqref{more11.2} we start from the defining identity \eqref{more14} and rewrite it in the form
\begin{equation*}
\begin{split}
r^2&={\bf k}^{eff}(\zeta_1(r)+i\zeta_2(r))-i\pi m'_0(\zeta_1(r)+i\zeta_2(r))\\
&={\bf k}^{eff}(\zeta_1(r))+i\zeta_2(r)\partial_z{\bf k}^{eff}(\zeta_1(r))-i\pi m'_0(\zeta_1(r))+O(|\zeta_2(r)|(r^4+r^{d+2}),
\end{split}
\end{equation*}
using the decomposition ${\bf k}={\bf k}^{eff}-i\pi m'_0$ in Lemma \ref{LemG} and the bounds $|\zeta_2(r)|\lesssim 1+r^{d-2}$ proved in part (ii). Taking real and imaginary parts, it follows that
\begin{equation}\label{more22}
\begin{split}
&\big|r^2-{\bf k}^{eff}(\zeta_1(r))\big|\lesssim |\zeta_2(r)|(r^4+r^{d+2}),\\
&\big|\zeta_2(r)\partial_z{\bf k}^{eff}(\zeta_1(r))-\pi m'_0(\zeta_1(r))\big|\lesssim |\zeta_2(r)|(r^4+r^{d+2}).
\end{split}
\end{equation}
We can now use the expansion ${\bf k}^{eff}(\zeta_1(r))=1/(\zeta_1(r))^{-2}+O(r^{d+1}+r^4\log(1/r))$ in \eqref{ExpansionGInfinityGeneral} to prove the bounds on $|\zeta_1(r)-1/r|$ in \eqref{more11.2}. Then we expand $\partial_z{\bf k}^{eff}(\zeta_1(r))=-2r^3+O(r^4+r^{d+2})$ to derive the  bounds on $\zeta_2(r)$ in \eqref{more11.2}.
\end{proof}

\subsubsection{Conclusion of the proof}
We are now ready to complete the proof of the first main theorem. We start from the formula \eqref{more5}, transfer the contour of integration to the lines $\Gamma^{-}_{\gamma_1}\cup\Gamma^{+}_{\gamma_1}$ and use the residue formula (see Figure \ref{fig:int_contours}). The result is
\begin{equation}\label{more30}
\begin{split}
&\widehat{G}(\xi,\tau)-\delta_0(\tau)=\mathfrak{1}_{[0,\infty)}(\tau)\frac{|\xi|}{2\pi}\int_{\R}\frac{\mathbf{k}(\theta)}{|\xi|^2-\mathbf{k}(\theta)}e^{i\theta(\tau|\xi|)}\,d\theta\\
&=\mathfrak{1}_{[0,\infty)}(\tau)|\xi|\Big\{\frac{1}{2\pi}\int_{\Gamma^{-}_{\gamma_1}\cup\Gamma^{+}_{\gamma_1}}\frac{\mathbf{k}(\theta)}{|\xi|^2-\mathbf{k}(\theta)}e^{i\theta(\tau|\xi|)}\,d\theta+i\,\mathrm{res}_{\zeta(|\xi|)}H_{|\xi|,\tau}+i\,\mathrm{res}_{-\overline{\zeta(|\xi|)}}H_{|\xi|,\tau}\Big\}\\
&=I_1(|\xi|,\tau)+I_2(|\xi|,\tau),
\end{split}
\end{equation}
where
\begin{equation}\label{more31}
\begin{split}
&H_{r,\tau}(z):=\frac{\mathbf{k}(z)}{r^2-\mathbf{k}(z)}e^{iz(\tau r)},\\
&I_1(r,\tau)=\mathfrak{1}_{[0,\infty)}(\tau)\frac{r}{2\pi}\int_{\Gamma^{-}_{\gamma_1}\cup\Gamma^{+}_{\gamma_1}}\frac{\mathbf{k}(\theta)}{r^2-\mathbf{k}(\theta)}e^{i\theta(\tau r)}\,d\theta,\\
&I_2(r,\tau):= \mathfrak{1}_{[0,\infty)}(\tau)r\big[i\mathrm{res}_{\zeta(r)}H_{r,\tau}+i\mathrm{res}_{-\overline{\zeta(r)}}H_{r,\tau}\big].
\end{split}
\end{equation}

{\bf{Step 1.}} The function $I_1$ generates the error term $\mathcal{E}_l$ in \eqref{GFLF}. To obtain additional smallness as $r\to 0$ we notice that
\begin{equation*}
 \int_{\Gamma^{-}_{\gamma_1}\cup\Gamma^{+}_{\gamma_1}}\frac{\theta^{-2}}{r^2-\theta^{-2}}e^{i\theta(\tau r)}\,d\theta\equiv 0
\end{equation*}
if $\tau\geq 0$ and $r>0$. This is because the integrand is analytic in $\mathbb{H}_+$, so one can freely move the contour of integration to infinity. Therefore, as in \cite{BeMaMoLin},
\begin{equation}\label{more31.5}
\begin{split}
I_1(r,\tau)&=\mathfrak{1}_{[0,\infty)}(\tau)\frac{r}{2\pi}\int_{\Gamma^{-}_{\gamma_1}\cup\Gamma^{+}_{\gamma_1}}\Big[\frac{\mathbf{k}(\theta)}{r^2-\mathbf{k}(\theta)}-\frac{\theta^{-2}}{r^2-\theta^{-2}}\Big]e^{i\theta(\tau r)}\,d\theta\\
&=\mathfrak{1}_{[0,\infty)}(\tau)\frac{r}{2\pi}\int_{\Gamma^{-}_{\gamma_1}\cup\Gamma^{+}_{\gamma_1}}\frac{r^2[\mathbf{k}(\theta)-\theta^{-2}]}{(r^2-\mathbf{k}(\theta))(r^2-\theta^{-2})}e^{i\theta(\tau r)}\,d\theta.
\end{split}
\end{equation}

We define the function 
\begin{equation}\label{more40}
Q(r,z):=\frac{r^2[\mathbf{k}(z)-z^{-2}]}{(r^2-\mathbf{k}(z))(r^2-z^{-2})}.
\end{equation}
The function is analytic in $z$ in the region $\mathcal{D}_{2\gamma_1}\setminus\mathcal{D}_{\gamma_1/2}$ and satisfies the bounds
\begin{equation}\label{more41}
\begin{split}
|Q(r,z)|&\lesssim \frac{r^2[(1+|z|)^{-d-1}+(1+|z|)^{-4}\log (2+|z|)]}{(r^2+|z|^{-2})(r^2+|z|^{-2})}\\
&\lesssim \frac{r^2[(1+|z|)^{-d+3}+\log (2+|z|)]}{(1+r^2|z|^{2})^2}.
\end{split}
\end{equation}
Taking $r\partial_r$ or $z\partial_z$ derivatives preserves these bounds in a slightly smaller region (due to the Cauchy integral formula),
\begin{equation}\label{more42}
|(r\partial_r)^a(z\partial_z)^bQ(r,z)|\lesssim_{a,b}\frac{r^2[(1+|z|)^{-d+3}+\log (2+|z|)]}{(1+r^2|z|^{2})^2},
\end{equation}
for $r\in(0,r_1]$ and $z\in\mathcal{D}_{3\gamma_1/2}\setminus\mathcal{D}_{2\gamma_1/3}$.

We can now estimate the integrals $I_1$. We use \eqref{more31.5} and \eqref{more41} and notice that $|e^{i\theta(\tau r)}|\lesssim e^{-\gamma_1\tau r}$ for $\theta\in\Gamma^{-}_{\gamma_1}\cup\Gamma^{+}_{\gamma_1}$. Thus
\begin{equation*}
\begin{split}
|I_1(r,\tau)|&\lesssim \mathfrak{1}_{[0,\infty)}(\tau)e^{-\gamma_1\tau r}r\int_{\R}\frac{r^2[(1+|x|)^{-d+3}+\log (2+|x|)]}{(1+r^2|x|^{2})^2}\,dx\\
&\lesssim \mathfrak{1}_{[0,\infty)}(\tau)e^{-\gamma_1\tau r}\cdot [r^2\log (1/r)+r^{d-1}].
\end{split}
\end{equation*}
Taking $r\partial_r$ and $\tau\partial_\tau$ derivatives does not change the bounds, since one can use \eqref{more42} instead of \eqref{more41} and integrate by parts in $\theta$ when the derivative hits the exponential $e^{i\theta(\tau r)}$. This completes the proof for the bounds on the $\mathcal{E}_l$ term in \eqref{GFLF2}.

The analysis in the thin-tail case is similar, except that we correct with the function $\theta^{-2}+3a_2\theta^{-4}$ instead of the  function $\theta^{-2}$, leading to the stronger bounds in \eqref{GFLF4}.

{\bf{Step 2.}} We examine now the function $I_2$. With $\zeta=\zeta_1+i\zeta_2$ we calculate
\begin{equation*}
\mathrm{res}_{\zeta(r)}H_{r,\tau}=\frac{\mathbf{k}(\zeta(r))}{-\mathbf{k}'(\zeta(r))}e^{i\tau r\zeta(r)}=\frac{-\mathbf{k}(\zeta(r))}{\mathbf{k}'(\zeta(r))}e^{i\tau r\zeta_1(r)}e^{-\tau r\zeta_2(r)}.
\end{equation*}
In view of \eqref{FunctionalEqG}, we have $\mathbf{k}(-\overline{z})=\overline{\mathbf{k}(z)}$ and $\mathbf{k}'(-\overline{z})=-\overline{\mathbf{k}'(z)}$. Therefore
\begin{equation*}
\mathrm{res}_{-\overline{\zeta(r)}}H_{r,\tau}=\frac{\mathbf{k}(-\overline{\zeta(r)})}{-\mathbf{k}'(-\overline{\zeta(r)})}e^{-i\tau r\overline{\zeta(r)}}=\frac{\overline{\mathbf{k}(\zeta(r))}}{\overline{\mathbf{k}'(\zeta(r))}}e^{-i\tau r\zeta_1(r)}e^{-\tau r\zeta_2(r)}.
\end{equation*}
Using the definition \eqref{more31} we have
\begin{equation*}
I_2(r,\tau)= \mathfrak{1}_{[0,\infty)}(\tau)\Re\Big\{i\frac{-2r\mathbf{k}(\zeta(r))}{\mathbf{k}'(\zeta(r))}e^{i\tau r\zeta_1(r)}e^{-\tau r\zeta_2(r)}\Big\}=\mathfrak{1}_{[0,\infty)}(\tau)\Re\big\{i(1+\mathfrak{m}_l(r))e^{i\tau \omega(r)}\big\},
\end{equation*}
where
\begin{equation}\label{more32}
\omega(r):=r\zeta(r)=r\zeta_1(r)+ir\zeta_2(r),\qquad \mathfrak{m}_l(r):=\frac{-2r\mathbf{k}(\zeta(r))}{\mathbf{k}'(\zeta(r))}-1.
\end{equation}
This agrees with the second term in the main identity \eqref{GFLF}. The bounds \eqref{GFLF3} and \eqref{GFLF5} follow from \eqref{more11.1}--\eqref{more11.2}. The bounds on $\mathfrak{m}_l$ in \eqref{GFLF2} and \eqref{GFLF4} follow using also the expansions in Lemma \ref{LemG}. This completes the proof of Theorem \ref{MainThm}.

\section{Nonlinear asymptotic stability: proof of Theorem \ref{thm:main_simple}}\label{outlineN}

In this section we discuss some of the main ideas in the proof of Theorem \ref{thm:main_simple} regarding stability of the Poisson equilibrium \eqref{NVP.2}, following our paper \cite{IoPaWaWiPoisson}. 

\subsection{The main decomposition and the bootstrap argument} We recall first our main equations. The perturbation $f$ satisfies the Vlasov-Poisson system
\begin{equation}\label{cunu1}
\begin{split}
&\left(\partial_t+v\cdot\nabla_x\right)f+E\cdot\nabla_vM_1+E\cdot\nabla_vf=0,\\
&E:=\nabla_x\Delta_x^{-1}\rho,\qquad \rho(x,t):=\int_{\mathbb{R}^3}f(x,v,t)dv.
\end{split}
\end{equation}
We introduce the ``backwards characteristics'' of this system, which are the functions $X,V:\R^3\times\R^3\times\mathcal{I}^2_T\to\R^3$ obtained by solving the ODE system
\begin{equation}\label{cunu2}
\begin{alignedat}{2}
&\partial_sX(x,v,s,t)=V(x,v,s,t),\qquad &X(x,v,t,t)&=x,\\
&\partial_sV(x,v,s,t)=E(X(x,v,s,t),s),\qquad &V(x,v,t,t)&=v,
\end{alignedat}
\end{equation}
where $\mathcal{I}^2_T=\{(s,t)\in[0,T]^2:\,s\leq t\}$. These equations \eqref{Lan1} yield the reproducing formulas
\begin{equation}\label{cunu3}
\begin{split}
X(x,v,s,t)&=x-(t-s)v+\int_s^t(\tau-s)E(X(x,v,\tau,t),\tau)\,d\tau,\\
V(x,v,s,t)&=v-\int_{s}^t E(X(x,v,\tau,t),\tau)\,d\tau.
\end{split}
\end{equation}
The main equation \eqref{cunu1} gives
\begin{equation}\label{cunu4}
\rho(x,t)+\int_0^t\int_{\R^3}(t-s)\rho(x-(t-s)v,s)M_1(v)\,dvds=\mathcal{N}(x,t),
\end{equation}
for any $(x,t)\in\R^3\times[0,T]$, where
\begin{equation}\label{cunu5}
\begin{split}
\mathcal{N}(x,t)&:=\mathcal{N}_1(x,t)+\mathcal{N}_2(x,t),\\
\mathcal{N}_1(x,t)&:=\int_{\R^3}f_0(X(x,v,0,t),V(x,v,0,t))\,dv,\\
\mathcal{N}_2(x,t)&:=\int_0^t\int_{\R^3}\big\{E(x-(t-s)v,s)\cdot \nabla_vM_1(v)\\
&\qquad\qquad-E(X(x,v,s,t),s)\cdot \nabla_vM_1(V(x,v,s,t))\big\}\,dvds.
\end{split}
\end{equation}

In our case, when $M_1$ is the Poisson equilibrium, the Volterra equation \eqref{cunu5} can be solved explicitly, using the Green's function $G=G_1$ calculated in \eqref{GP}. We have
\begin{equation}\label{cunu6}
 \widehat{\rho}(\xi,t)=\widehat{\mathcal{N}}(\xi,t)-\int_0^t\widehat{\mathcal{N}}(\xi,\tau)e^{-(t-\tau)|\xi|}\sin(t-\tau)\,d\tau.
\end{equation}
We examine the definitions \eqref{cunu5} and write $\mathcal{N}$ in the form
\begin{equation}\label{cunu7}
\mathcal{N}(x,t)=\int_{\R^3}h(x-tv,v,t)\,dv,
\end{equation}
for a suitable real-valued function $h$. For nonlinear analysis we need two alternative formulas:

\begin{lemma}[\protect{\cite[Lemma 2.1]{IoPaWaWiPoisson}}]\label{SolvingVolterraP}

With the notation above, if $f$ is a regular solution of the Vlasov-Poisson system \eqref{cunu1} on the time interval $[0,T]$ then we have two alternative expressions:
\begin{equation*}
\begin{split}
\rho(x,t)&=R^\ast[h](x,t)+\Re\left\{e^{-it}T^\ast[h](x,t)\right\},\qquad \ast\in\{I,II\},
\end{split}
\end{equation*}
where the linear operators $R^I,T^I,R^{II},T^{II}$ are defined by
\begin{equation}\label{FormulaRIP}
\begin{aligned}
 \widehat{R^{I}[h]}(\xi,t)&:=\int_{\mathbb{R}^3} e^{-it\langle v,\xi\rangle}\widehat{h}(\xi,v,t)dv,\\
 \widehat{T^{I}[h]}(\xi,t)&:=-i\int_{0}^t\int_{\mathbb{R}^3}e^{is}e^{-(t-s)\vert\xi\vert}e^{-is\langle v,\xi\rangle}\widehat{h}(\xi,v,s)dvds,\\
\end{aligned}
\end{equation}
and
\begin{equation}\label{FormulaRIIP}
\begin{aligned}
 \widehat{R^{II}[h]}(\xi,t)&:=\int_{\mathbb{R}^3} \frac{(\vert\xi\vert-i\langle v,\xi\rangle)^2}{1+(\vert\xi\vert-i\langle v,\xi\rangle)^2}e^{-it\langle \xi,v\rangle}\widehat{h}(\xi,v,t)dv,\\
 \widehat{T^{II}[h]}(\xi,t)&:=e^{-t\vert\xi\vert}\int_{\mathbb{R}^3}\frac{1}{1-\langle v,\xi\rangle-i\vert\xi\vert}\widehat{h}(\xi,v,0)dv\\
 &\quad+\int_{0}^t\int_{\mathbb{R}^3}\frac{e^{is}}{1-\langle v,\xi\rangle-i\vert\xi\vert}e^{-(t-s)\vert\xi\vert}e^{-is\langle v,\xi\rangle}\widehat{\partial_s h}(\xi,v,s)dvds.
\end{aligned} 
\end{equation}
\end{lemma}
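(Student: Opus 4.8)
\smallskip

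The plan is to start from the explicit solution formula \eqref{cunu6} of the Volterra equation \eqref{cunu4} and to massage it into the two announced forms. Writing $\widehat{\mathcal{N}}(\xi,s)=\int_{\mathbb{R}^3}e^{-is\langle v,\xi\rangle}\widehat{h}(\xi,v,s)\,dv$ (this is just the $x$-Fourier transform of \eqref{cunu7}, with $\widehat{h}$ the $x$-Fourier transform of $h$), one sees immediately that the instantaneous term in \eqref{cunu6} equals $\widehat{R^{I}[h]}(\xi,t)$, so the whole content is to identify the convolution term $-\int_0^t\widehat{\mathcal{N}}(\xi,\tau)e^{-(t-\tau)|\xi|}\sin(t-\tau)\,d\tau$.

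For the formula with $\ast=I$ I would write $\sin(t-\tau)=\tfrac{1}{2i}\big(e^{i(t-\tau)}-e^{-i(t-\tau)}\big)$ and split the convolution term accordingly, factoring out $e^{\pm it}$. The piece carrying $e^{-it}$ equals $\tfrac12 e^{-it}$ times $\big(-i\int_0^t\int e^{is}e^{-(t-s)|\xi|}e^{-is\langle v,\xi\rangle}\widehat{h}(\xi,v,s)\,dv\,ds\big)=\tfrac12 e^{-it}\widehat{T^{I}[h]}(\xi,t)$. For the piece carrying $e^{it}$ I would use that $h$ is real in $x$, so that $\overline{\widehat{h}(\xi,v,s)}=\widehat{h}(-\xi,v,s)$; a short manipulation then shows it equals $\tfrac12 e^{it}\overline{\widehat{T^{I}[h]}(-\xi,t)}$. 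Since $\widehat{\Re u}(\xi)=\tfrac12\big(\widehat{u}(\xi)+\overline{\widehat{u}(-\xi)}\big)$ for any $u$, the two pieces add up to $\widehat{\Re\{e^{-it}T^{I}[h]\}}(\xi,t)$, and inverting the Fourier transform yields the first identity.

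For the formula with $\ast=II$ I would integrate by parts in time inside $\widehat{T^{I}[h]}$. Pulling out $e^{-t|\xi|}$, the $s$-integrand is $e^{s\mu}\widehat{h}(\xi,v,s)$ with $\mu=\mu(\xi,v):=|\xi|+i(1-\langle v,\xi\rangle)=i\big(1-\langle v,\xi\rangle-i|\xi|\big)$; since $\Re\mu=|\xi|\neq 0$ for $\xi\neq 0$ (and $\xi=0$ is negligible), $1/\mu=-i\big(1-\langle v,\xi\rangle-i|\xi|\big)^{-1}$ causes no trouble. Integration by parts produces the boundary term at $s=t$, the boundary term at $s=0$, and an integral against $\widehat{\partial_s h}$. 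Using $e^{-t|\xi|}e^{t\mu}=e^{it}e^{-it\langle v,\xi\rangle}$ and $e^{-t|\xi|}e^{s\mu}=e^{-(t-s)|\xi|}e^{is}e^{-is\langle v,\xi\rangle}$, the latter two reassemble exactly into the two terms defining $\widehat{T^{II}[h]}(\xi,t)$. The boundary term at $s=t$ equals $-e^{it}\int e^{-it\langle v,\xi\rangle}\widehat{h}(\xi,v,t)\big(1-\langle v,\xi\rangle-i|\xi|\big)^{-1}\,dv$; applying $\Re\{e^{-it}\,\cdot\,\}$ to it (again simplifying via the reality of $h$) and adding $R^{I}[h]$, the partial-fraction identity $1-\frac{w^2}{1+w^2}=\frac{1}{1+w^2}=\frac12\big(\frac{1}{1-iw}+\frac{1}{1+iw}\big)$, with $w=|\xi|-i\langle v,\xi\rangle$ and $1-iw=1-\langle v,\xi\rangle-i|\xi|$, shows that $R^{I}[h]+\Re\{e^{-it}\cdot(\text{boundary term at }s=t)\}=R^{II}[h]$. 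Combining the pieces gives $\rho=R^{I}[h]+\Re\{e^{-it}T^{I}[h]\}=R^{II}[h]+\Re\{e^{-it}T^{II}[h]\}$.

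The computation is purely algebraic, so the only genuine points are (a) the bookkeeping of the reality constraint $\overline{\widehat{h}(\xi,\cdot,\cdot)}=\widehat{h}(-\xi,\cdot,\cdot)$ and of the reflections $\xi\mapsto-\xi$, which is what repackages the complex exponentials into a single $\Re\{\cdots\}$; and (b) recognizing that the $s=t$ boundary term from the integration by parts is precisely what upgrades the symbol of the instantaneous operator from $1$ (in $R^{I}$) to $\frac{w^2}{1+w^2}$ (in $R^{II}$) — this is the step where the above partial-fraction identity is essential. The analytic input is only that, for a regular solution $f$ on $[0,T]$, the profile $h$ is $C^1$ in $t$ with sufficient decay in $v$ to justify Fubini and the boundary evaluations, which follows directly from \eqref{cunu5} and the regularity of $f$ and $E$.
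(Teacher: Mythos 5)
Your proposal is correct and follows essentially the same route as the paper: solve the Volterra equation via the explicit Poisson Green's function \eqref{cunu6}--\eqref{cunu7}, split $\sin(t-\tau)$ into complex exponentials and use the reality of $h$ to package the result as $R^I+\Re\{e^{-it}T^I\}$, then integrate by parts in $s$ (the normal-form step encoded in \eqref{NFG}/\eqref{GPex}) so that the $s=t$ boundary term combines with $R^I$, via the partial-fraction identity you state, into the symbol $\frac{w^2}{1+w^2}$ of $R^{II}$. The bookkeeping of the reflections $\xi\mapsto-\xi$ and the sign of the boundary terms checks out, so no gaps.
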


The two representations follow easily from \eqref{cunu6}--\eqref{cunu7}, using the identity \eqref{NFG} and integration by parts in $s$ for the second representation.

Lemma \ref{SolvingVolterraP} leads to a natural decomposition of the density $\rho$ into a ``static'' component $\rho^{stat}$ and an ``oscillatory'' component $\Re(e^{-it}\rho^{osc})$. The corresponding decomposition of the electric field claimed in \eqref{eq:E-decomp} follows by setting $E^\ast:=\nabla\Delta^{-1}\rho^\ast$, $\ast\in\{osc,stat\}$. We will control both of these components using a bootstrap argument and different norms. 

The more subtle issue is whether to use the first representation \eqref{FormulaRIP} or the second representation \eqref{FormulaRIIP} to recover the density $\rho$. Ideally, we would like to use the second representation, for two reasons: (1) the static term $R^{II}$ contains an additional favorable factor at low frequencies, compared to the term $R^I$, and (2) the derivative $\partial_sh$ of the ``profile" $h$ is expected to be smaller than the profile itself. 

However, the second representation contains the potentially small denominator $1-\langle v,\xi\rangle-i\vert\xi\vert$, coming from the normal form. To avoid the associated singularity, our basic idea is to use the first representation when this denominator is small (essentially $\big|1-\langle v,\xi\rangle-i\vert\xi\vert\big|\lesssim 1$) and then use the second representation when the denominator is large.

\subsubsection{Dyadic decomposition of the nonlinearity} To implement this idea, we need to decompose dyadically the nonlinear terms in \eqref{cunu5} in frequency, velocity space, and time. For this we fix an even smooth function $\varphi: \R\to[0,1]$ supported in $[-8/5,8/5]$ and equal to $1$ in $[-5/4,5/4]$,
and define
\begin{equation}\label{phik*}
\varphi_k(x) := \varphi(|x|/2^k) - \varphi(|x|/2^{k-1}) , \quad \varphi_{\leq k}(x):=\varphi(|x|/2^k),
  \quad\varphi_{\geq k}(x) := 1-\varphi(x/2^{k-1}),
\end{equation}
for any $k\in\mathbb{Z}$ and $x\in\R^d$, $d\geq 1$. Let $P_k$, $P_{\leq k}$, and $P_{\geq k}$ denote the operators on $\R^3$ defined by the Fourier multipliers $\varphi_k$, $\varphi_{\leq k}$, and $\varphi_{\geq k}$ respectively, and let $\widetilde{\varphi}_0=\varphi_{\leq 0}$ and $\widetilde{\varphi}_j=\varphi_{j}$ if $j\geq 1$.

We examine the nonlinearities $\mathcal{N}_1$ and $\mathcal{N}_2$ and define the functions $L_{1,j}:\R^3\times\R^3\times[0,T]\to\R$ and $L_{2,j}:\R^3\times\R^3\times\mathcal{I}_T^2\to\R$  by
\begin{equation}\label{qwp1}
\begin{split}
L_{1,j}(x,v,t)&:=\widetilde{\varphi}_j(v)\cdot f_0(X(x+tv,v,0,t),V(x+tv,v,0,t)),\\
L_{2,j}(x,v,s,t)&:=E(x,s)\cdot M'_j(v)-E(X(x+(t-s)v,v,s,t),s)\cdot M'_j(V(x+(t-s)v,v,s,t)),
\end{split}
\end{equation} 
where $j\in\Z_+$ and $M'_j(v):=\widetilde{\varphi}_j(v)\nabla_vM_1(v)$. Then for any $j\in\Z_+$ and $k\in\Z$ we define the functions $L_{1,j,k}:\R^3\times\R^3\times[0,T]\to\R$ and $L_{2,j,k}:\R^3\times\R^3\times\mathcal{I}_T^2\to\R$ by
\begin{equation}\label{qwp2}
L_{1,j,k}(x,v,t):=P_kL_{1,j}(x,v,t),\qquad L_{2,j,k}(x,v,s,t):=P_kL_{2,j}(x,v,s,t),
\end{equation} 
where the projections $P_k$ apply in the $x$ variable. It follows from \eqref{cunu5} that
\begin{equation}\label{Lan10}
\begin{split}
&\mathcal{N}_1=\sum_{j\in\Z_+,\,k\in\Z}\mathcal{N}_{1,j,k},\qquad\mathcal{N}_{1,j,k}(x,t):=\int_{\R^3}L_{1,j,k}(x-tv,v,t)\,dv,\\
&\mathcal{N}_2=\sum_{j\in\Z_+,\,k\in\Z}\mathcal{N}_{2,j,k},\qquad\mathcal{N}_{2,j,k}(x,t):=\int_0^t\int_{\R^3}L_{2,j,k}(x-(t-s)v,v,s,t)\,dvds.
\end{split}
\end{equation}
Finally, we define
\begin{equation}\label{cunu9}
\begin{split}
&h_{1,j,k}(x,v,t):=L_{1,j,k},\qquad h_{2,j,k}(x,v,t):=\int_0^tL_{2,j,k}(x+sv,v,s,t)\,ds,\\
&R^\ast_{a,j,k}:=R^\ast [h_{a,j,k}],\qquad T^\ast_{a,j,k}:=T^\ast [h_{a,j,k}],
\end{split}
\end{equation}
for $a\in\{1,2\}$, $\ast\in\{I,II\}$, $j\in\Z_+$, $k\in\Z$, where the operators $R^{I}, T^{I}, R^{II}, T^{II}$ are defined in \eqref{FormulaRIP}--\eqref{FormulaRIIP}. 

We are now ready to complete our main decomposition. We define the sets
\begin{equation}\label{sug30}
\begin{split}
A^I&:=\big\{(j,k,m)\in\Z_+\times\Z\times\Z_+:\,m<\delta^{-4}\text{ or }j>19m/20\text{ or }k+j+\delta m/3> 0\big\},\\
A^{II}&:=\big\{(j,k,m)\in\Z_+\times\Z\times\Z_+:\,m\geq\delta^{-4}\text{ and }j\leq 19m/20\text{ and }k+j+\delta m/3\leq 0\big\},
\end{split}
\end{equation}
where $\delta\in(0,1/100]$ is a small parameter. We have thus established the following full decomposition of the density $\rho$:

\begin{lemma}[\protect{\cite[Corollary 2.3]{IoPaWaWiPoisson}}]\label{rhodeco}
Assume that $f:\R^3\times\R^3\times[0,T]\to\R$ is a regular solution of the system \eqref{VP} and define the function $\rho$ as before. Then we can decompose
\begin{equation}\label{sug31}
\rho=\rho^{stat}+\Re\big\{e^{-it}\rho^{osc}\big\},
\end{equation}
where, with the definitions above,
\begin{equation}\label{bvn0}
\begin{split}
\rho^{stat}&=\rho^{stat}_{1,I}+\rho^{stat}_{1,II}+\rho^{stat}_{2,I}+\rho^{stat}_{2,II},\\
\rho^{osc}&=\rho^{osc}_{1,I}+\rho^{osc}_{1,II}+\rho^{osc}_{2,I}+\rho^{osc}_{2,II},\\
\end{split}
\end{equation}
where for $a\in\{1,2\}$ and $\ast\in\{I,II\}$ we define 
\begin{equation}\label{bvn1}
\rho^{stat}_{a,\ast}(x,t):=\sum_{(j,k,m)\in A^\ast}\widetilde{\varphi}_m(t)R^{\ast}_{a,j,k}(x,t),
\end{equation}
\begin{equation}\label{bvn2}
\rho^{osc}_{a,\ast}(x,t):=\sum_{(j,k,m)\in A^\ast}\widetilde{\varphi}_m(t)T^{\ast}_{a,j,k}(x,t).
\end{equation}
\end{lemma}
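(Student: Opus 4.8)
The plan is to regard this statement as a bookkeeping reorganization of the two representations in Lemma \ref{SolvingVolterraP}, rather than as a result with genuine analytic content. Recall that, for $f$ a regular solution, Lemma \ref{SolvingVolterraP} gives $\rho = R^\ast[h] + \Re\{e^{-it}T^\ast[h]\}$ for \emph{both} $\ast\in\{I,II\}$, where $h$ is the profile with $\mathcal{N}(x,t)=\int_{\R^3}h(x-tv,v,t)\,dv$ from \eqref{cunu7}, and the four operators in \eqref{FormulaRIP}--\eqref{FormulaRIIP} are linear integral operators in $h$. Hence once we know that the explicit dyadic pieces of \eqref{cunu9} sum to $h$, i.e.\ $h=\sum_{a\in\{1,2\}}\sum_{j\in\Z_+,\,k\in\Z}h_{a,j,k}$, linearity will give $\rho=\sum_{a,j,k}\big(R^\ast_{a,j,k}+\Re\{e^{-it}T^\ast_{a,j,k}\}\big)$ for either $\ast$, and it then only remains to insert a partition of unity in time and to choose the representation block by block.

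First I would verify the identity $\mathcal{N}_{a,j,k}(x,t)=\int_{\R^3}h_{a,j,k}(x-tv,v,t)\,dv$ recorded in \eqref{Lan10}: for $a=1$ this is immediate from $h_{1,j,k}=L_{1,j,k}$, while for $a=2$ it is the change of variables $x\mapsto x+sv$ inside the inner $s$-integral, using $E(x-(t-s)v,s)=E((x-tv)+sv,s)$ together with the reproducing formulas \eqref{cunu3} to rewrite $X,V$ along the free-transport flow. Next, summing over the dyadic parameters, the velocity partition of unity $\sum_{j\in\Z_+}\widetilde{\varphi}_j(v)=1$ (so that $\sum_j M'_j=\nabla_vM_1$) and the Littlewood--Paley partition $\sum_{k\in\Z}\varphi_k=1$ in the $x$-frequency — which commutes with the translations $x\mapsto x+sv$ appearing in \eqref{cunu9} — together recombine $\sum_{j,k}\mathcal{N}_{a,j,k}=\mathcal{N}_a$, and hence realize $h=h_1+h_2=\sum_{a,j,k}h_{a,j,k}$ as claimed.

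Then I would apply Lemma \ref{SolvingVolterraP} to $f$ and use the linearity just noted to write $\rho=\sum_{a,j,k}\big(R^\ast_{a,j,k}+\Re\{e^{-it}T^\ast_{a,j,k}\}\big)$. Multiplying by $1=\sum_{m\in\Z_+}\widetilde{\varphi}_m(t)$ (valid on $[0,T]$) and, for the block indexed by $(j,k,m)$, selecting representation $I$ when $(j,k,m)\in A^I$ and representation $II$ when $(j,k,m)\in A^{II}$ is legitimate precisely because the two representations coincide as functions and because $A^I$ and $A^{II}$ partition $\Z_+\times\Z\times\Z_+$, as is clear from \eqref{sug30}. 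Collecting the non-oscillatory terms into $\rho^{stat}$ and the $e^{-it}$-modulated terms into $\Re\{e^{-it}\rho^{osc}\}$, exactly as in \eqref{bvn1}--\eqref{bvn2}, produces the decomposition \eqref{sug31}.

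The only step that is more than formal — and the closest thing to an obstacle — is justifying the convergence of the triple series, which is what makes the rearrangement (and in particular the block-by-block switch between the two representations) legitimate: one needs absolute convergence of $\sum_{a,j,k,m}\widetilde{\varphi}_m(t)R^\ast_{a,j,k}$ and of the analogous $T^\ast$-series in a suitable norm. This follows from the standing hypothesis that $f$ is a regular solution: the velocity weights carried by $f_0$ and by $\nabla_vM_1$ give summability in $j$; the frequency projections $P_k$ contribute $O(2^{3k})$ smallness as $k\to-\infty$ and, through the smoothness of $\mathcal{N}$, decay as $k\to+\infty$; and $\sum_m\widetilde{\varphi}_m$ has finite overlap. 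I would stress that this identity uses nothing about the precise inequalities defining $A^I,A^{II}$ in \eqref{sug30} — those are calibrated for the quantitative estimates in the subsequent sections, not for the reformulation itself.
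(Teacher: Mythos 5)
Your proposal is correct and follows essentially the same route as the paper, which presents the lemma as direct bookkeeping from Lemma \ref{SolvingVolterraP}, the dyadic decomposition \eqref{qwp1}--\eqref{cunu9}, the time partition $\sum_{m}\widetilde{\varphi}_m(t)=1$, and the fact that $A^I$ and $A^{II}$ partition $\Z_+\times\Z\times\Z_+$. The one point to make explicit is that the block-by-block choice of representation rests on the identity $R^I[g]+\Re\{e^{-it}T^I[g]\}=R^{II}[g]+\Re\{e^{-it}T^{II}[g]\}$ for each individual real-valued profile $g=h_{a,j,k}$ --- valid because representation $II$ is obtained from $I$ by an integration by parts in $s$ that is linear and applies piece by piece --- and not merely on the fact that both representations of the full profile $h$ equal $\rho$.
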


The main point of this decomposition is that we are able to prove stronger control of the low frequencies of the stationary component $\rho^{stat}$ compared to the time-oscillatory component $\rho^{osc}$. See Proposition \ref{MainBootstrapProp} below.

\subsubsection{Norms and the bootstrap proposition}\label{bootstrapass} 

We are now ready to define our main norms and state the main bootstrap proposition. Let $B_T$ denote the space of continuous functions on $\R^3\times [0,T]$ defined by the norm
\begin{equation}\label{normA}
\begin{split}
&\|f\|_{B_T}:=\sup_{t\in[0,T]}\|f(t)\|_{B^0_t},\\
&\|f(t)\|_{B^0_t}:=\sup_{k\in\Z}\big\{\langle t\rangle^3\|P_kf(t)\|_{L^\infty}+\|P_kf(t)\|_{L^1}\big\}.
\end{split}
\end{equation}
Assume that $\delta\in( 0,1/100]$ is a small parameter and define the norms
\begin{equation}\label{may12eqn21}
\begin{split}
\Vert f\Vert_{Stat_\delta}&:=\Vert  \langle t\rangle^{1-2\delta}\langle \nabla_x\rangle      f(t)\Vert_{B_T},\\
\Vert f\Vert_{Osc_\delta}&:=\Vert  \langle t\rangle^{-\delta}f(t)\Vert_{B_T}+\Vert   \langle t\rangle^{1-2\delta}\nabla_{x,t}f(t)\Vert_{B_T}.
\end{split}
\end{equation}

We are now ready to state our main bootstrap proposition:
\begin{proposition}[\protect{\cite[Proposition 2.4]{IoPaWaWiPoisson}}]\label{MainBootstrapProp} 
There exist $0<\overline{\varepsilon}\ll 1$, $\delta\in (0,\frac{1}{100}]$ such that the following is true:

Assume that $f:\R^3\times\R^3\times[0,T]\to\R$ is a regular solution of the system \eqref{VP} for some $T>0$ with initial data $f(0)=f_0:\R^3\times\R^3\to\R$ satisfying the smallness condition
\begin{equation}\label{bootinit}
  \sum_{\abs{\alpha}+\abs{\beta}\leq 1}\norm{\ip{v}^{4.5}\partial_x^\alpha\partial_v^\beta f_0(x,v)}_{L^\infty_x L^\infty_v}+\norm{\ip{v}^{4.5}\partial_x^\alpha\partial_v^\beta f_0(x,v)}_{L^1_x L^\infty_v}\leq\varep_0\leq \bar\varep.
  \end{equation}
Assume the associated density $\rho$ decomposes as $\rho=\rho^{stat}+\Re\{e^{-it}\rho^{osc}\}$ as in Lemma \ref{rhodeco}, such that
\begin{equation}\label{YW12}
\Vert \rho^{stat}\Vert_{Stat_\delta}+\Vert \rho^{osc}\Vert_{Osc_\delta}\leq\varep_1,
\end{equation}
for some $0<\varep_1\leq\varep_0^{3/4}$. Then the functions $\rho^{stat}$ and $\rho^{osc}$ satisfy the improved bounds
\begin{equation}\label{YW12improved}
\Vert \rho^{stat}\Vert_{Stat_\delta}+\Vert \rho^{osc}\Vert_{Osc_\delta}\lesssim\varep_0.
\end{equation}
\end{proposition}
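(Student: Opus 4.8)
The plan is to run a standard continuity/bootstrap argument: we assume the a priori bounds \eqref{YW12} on $[0,T]$ and show they self-improve to \eqref{YW12improved}, which together with local well-posedness and the fact that $\varepsilon_1\le\varepsilon_0^{3/4}$ propagates the estimate globally. The core of the work is to estimate each of the eight pieces $\rho^{stat}_{a,*}$, $\rho^{osc}_{a,*}$ ($a\in\{1,2\}$, $*\in\{I,II\}$) in the respective $Stat_\delta$ or $Osc_\delta$ norm, using the explicit solution formulas in Lemma \ref{SolvingVolterraP} and the dyadic decomposition of Lemma \ref{rhodeco}. First I would treat the \emph{linear-in-$f_0$} contributions ($a=1$): here $h_{1,j,k}=L_{1,j,k}$ is built directly from $f_0$ composed with the characteristics, so using \eqref{cunu3} together with the a priori decay of $E=\nabla\Delta^{-1}\rho$ (which follows from \eqref{YW12} via the Green's-function representation \eqref{GP}, cf.\ \eqref{NicePropertiesEF}) one shows the characteristics are a small perturbation of free transport, and the velocity weights $\ip{v}^{4.5}$ in \eqref{bootinit} give enough integrability in $v$ to run the stationary-phase/non-stationary-phase estimates in \eqref{more44}. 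The operators $R^*,T^*$ then reproduce the $\ip{t}^{-3}$ pointwise and $L^1$ bounds, with the extra $\ip{t}^{1-2\delta}\ip{\nabla_x}$ (resp.\ $\ip{t}^{-\delta}$ and $\ip{t}^{1-2\delta}\nabla_{x,t}$) weights absorbed by the exponential damping $e^{-(t-s)|\xi|}$ in $T^I,T^{II}$ at high frequency and by the dichotomy encoded in $A^I,A^{II}$ at low frequency.

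The heart of the matter is the \emph{quadratic} term $\rho_{2,*}$, where $h_{2,j,k}$ involves the difference $E(x)\cdot M_j'(v)-E(X)\cdot M_j'(V)$ integrated along characteristics. Here one expands the difference using \eqref{cunu3}, producing terms of the schematic form $\int_s^t(\tau-s)E(X,\tau)\,d\tau$ times derivatives of $E$ and of $M_1$; each factor of $E$ is estimated by the a priori hypothesis \eqref{YW12}, so the output is quadratic in $\varepsilon_1$ and hence $\lesssim\varepsilon_1^2\le\varepsilon_0^{3/2}\ll\varepsilon_0$. The delicate point is that the naive decay of $E$ at low frequency is only $\ip{t}^{-2+\delta}$ in $L^\infty$ and $\ip{t}^{-1/2}$-type in $L^2$, which is \emph{not} integrable enough to close directly; this is precisely where one must use the second representation \eqref{FormulaRIIP}, exploiting (1) the extra low-frequency vanishing of the symbol $(\,|\xi|-i\langle v,\xi\rangle)^2/(1+(\,|\xi|-i\langle v,\xi\rangle)^2)$ in $R^{II}$, and (2) the fact that $\partial_s h_{2,j,k}$ carries an extra smallness (an extra power of $\ip{s}^{-1}$, roughly) compared to $h_{2,j,k}$. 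The region decomposition \eqref{sug30} is engineered exactly so that the normal-form denominator $1-\langle v,\xi\rangle-i|\xi|$ is bounded below on $A^{II}$ (where one uses \eqref{FormulaRIIP}), while on $A^I$ — where that denominator may be small — one falls back on \eqref{FormulaRIP}, at the cost of losing the gains above but compensating via the constraints $m<\delta^{-4}$, $j>19m/20$, or $k+j+\delta m/3>0$ defining $A^I$ (short times, or high velocity cutoff giving strong $v$-decay, or a favorable frequency/time balance).

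The main obstacle, and the step I expect to consume most of the proof, is the summation over the dyadic parameters $(j,k,m)$ in \eqref{bvn1}--\eqref{bvn2} after the individual bilinear estimates: one must verify that the per-piece bounds come with enough exponential or power gains in $|j|+|k|+|m|$ to sum to a geometric series, with total loss no worse than the small power $\delta$ appearing in the $Stat_\delta$, $Osc_\delta$ norms. This requires a careful bookkeeping of (i) the $v$-localization gain $\ip{2^j}^{-4.5}$ from the weights in \eqref{bootinit} against the number of $v$-dyadic pieces, (ii) the time-localization $\widetilde\varphi_m(t)$ interacting with the $\ip{t}$-weights, and (iii) the frequency gain from $e^{-(t-s)|\xi|}$ versus the inverse Laplacian $|\xi|^{-1}$ in $E$ at low $k$. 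Once the bilinear estimate for each $(j,k,m)$ is established with an explicit gain $2^{-c(|j|+|k|+|m|)\delta}$ (on the relevant region), the sum converges and yields \eqref{YW12improved}; the remaining continuity argument is routine given the local theory of \cite{IoPaWaWiPoisson}.
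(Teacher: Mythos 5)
Your architecture matches the outline of the actual proof in several respects: the continuity scheme with quadratic gain $\varep_1^2\lesssim\varep_0^{3/2}$, the easy treatment of the $a=1$ (initial-data) pieces via the characteristics bounds, and the reading of the dichotomy $A^I/A^{II}$ in \eqref{sug30} — on $A^{II}$ one has $2^{j+k}\lesssim 2^{-\delta m/3}$ so the normal-form denominator $1-\langle v,\xi\rangle-i|\xi|$ is bounded below and representation \eqref{FormulaRIIP} is safe, while representation \eqref{FormulaRIP} is used near the resonance — all of this is consistent with \cite{IoPaWaWiPoisson}.

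However, there is a genuine gap in your treatment of the reaction terms $\rho_{2,\ast}$: you propose to estimate the resulting bilinear expressions in $E$ by inserting only the \emph{magnitude} bounds that follow from \eqref{YW12} (i.e.\ \eqref{Laga11}--\eqref{Laga12}), and you locate the main difficulty in the dyadic summation over $(j,k,m)$. This does not close. The oscillatory component only satisfies $\|E^{osc}(\gamma)\|_{L^\infty}\lesssim\varep_1\langle\gamma\rangle^{-2+\delta}$, and in the reaction region (interior times $\gamma$ comparable to the output time $s$, where velocity averaging yields no decay) the time integration of such size bounds cannot reproduce the $\langle s\rangle^{-3}$ decay of the $B^0_s$ norm, let alone the improved $\langle t\rangle^{1-2\delta}\langle\nabla_x\rangle$ gain demanded of $\rho^{stat}$ in $Stat_\delta$. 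The proof in \cite{IoPaWaWiPoisson} handles this by a \emph{second} normal form, distinct from the $s$-integration by parts that produces representation II: inside the trilinear operators \eqref{ropi1}--\eqref{june16eqn1} one decomposes the electric field in the $\gamma$-slot as $E=E^{stat}+\Re\{e^{-i\gamma}E^{osc}\}$, uses the strong decay \eqref{Laga11} for the static part, and integrates by parts in $\gamma$ against the oscillation $e^{-i\gamma}$ for the oscillatory part, converting $E^{osc}$ into $\partial_\gamma E^{osc}$, which decays like $\langle\gamma\rangle^{-3+2\delta}$ by \eqref{Laga12}. This is precisely why the bootstrap norm $Osc_\delta$ carries the $\langle t\rangle^{1-2\delta}\nabla_{x,t}$ component (and why the coefficient norm \eqref{june16eqn3} tracks $\langle\gamma\rangle\partial_\gamma C$): the time-derivative control of the oscillatory profile must itself be propagated for this integration by parts to be admissible, a point your proposal does not address. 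In short, the heart of the proof is the exploitation of the $e^{-it}$ oscillation of the electric field inside the quadratic estimates (Lemma \ref{keybilinearlemma}), not the combinatorics of the dyadic summation, and without it your scheme loses a power of $t$ that cannot be recovered.
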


Proposition \ref{MainBootstrapProp} is the main quantitative result in the paper \cite{IoPaWaWiPoisson} and can be used to easily imply the main result Theorem \ref{thm:main_simple}.

\subsection{Bounds on the nonlinear characteristics}\label{CharcBo} The backward characteristic functions $X$ and $V$ play a key role in the nonlinear analysis. To measure the deviation of the characteristic flow from the free flow we define the functions $\widetilde{Y},\widetilde{W}:\R^3\times\R^3\times\mathcal{I}^2_T\to\R^3$ by
\begin{equation}\label{Lan6}
\begin{split}
\widetilde{Y}(x,v,s,t)&:=X(x+tv,v,s,t)-x-sv,\\
\widetilde{W}(x,v,s,t)&:=V(x+tv,v,s,t)-v.
\end{split}
\end{equation}
The definitions \eqref{cunu2} show that $\widetilde{Y}(x,v,t,t)=0,\widetilde{W}(x,v,t,t)=0$ and
\begin{equation}\label{Lan7}
\begin{split}
\widetilde{W}(x,v,s,t)&=-\int_s^tE(x+\tau v+\widetilde{Y}(x,v,\tau,t),\tau)\,d\tau,\\
\widetilde{Y}(x,v,s,t)&=\int_s^t(\tau-s)E(x+\tau v+\widetilde{Y}(x,v,\tau,t),\tau)\,d\tau.
\end{split}
\end{equation}
Moreover, for any $s, t\in[0,T]$ and $x,v\in\R^3$ we have
\begin{equation}\label{Lan7.5}
\partial_s\widetilde{Y}(x,v,s,t)=\widetilde{W}(x,v,s,t),\qquad \widetilde{Y}(x,v,s,t)=-\int_s^t\widetilde{W}(x,v,\tau,t)\,d\tau.
\end{equation}

The decomposition $\rho=\rho^{stat}+\Re\{e^{-it}\rho^{osc}\}$ in Lemma \ref{rhodeco} induces a natural decomposition of the electric field
\begin{equation}\label{Laga5}
\begin{split}
&E(t)=E^{stat}(t)+\Re\{e^{-it} E^{osc}(t)\},\\
&E^{stat}(t):=(\nabla_x\Delta_x^{-1}\rho^{stat})(t),\qquad E^{osc}(t):=(\nabla_x\Delta_x^{-1}\rho^{osc})(t).
\end{split}
\end{equation}
The bootstrap assumptions \eqref{YW12} induce natural bounds on the electric field components $E^{stat}$ and $E^{osc}$, of the form \begin{equation}\label{Laga11}
\|E^{stat}(t)\|_{L^\infty}\lesssim\varep_1\langle t\rangle^{-3+2\delta},\qquad \|\nabla_xE^{stat}(t)\|_{L^\infty}\lesssim \varep_1\langle t\rangle^{-4+2\delta}\ln(2+t),
\end{equation}
and
\begin{equation}\label{Laga12}
\|E^{osc}(t)\|_{L^\infty}\lesssim \varep_1\langle t\rangle^{-2+\delta},\qquad \|\nabla_{x,t}E^{osc}(t)\|_{L^\infty}\lesssim \varep_1\langle t\rangle^{-3+2\delta},
\end{equation}
for any $t\in[0,T]$. The identities \eqref{Lan7}--\eqref{Lan7.5} can then be used to prove the following pointwise bounds on the functions $\widetilde{Y}$ and $\widetilde{W}$ and their derivatives: 

\begin{lemma}[\protect{\cite[Lemma 3.6]{IoPaWaWiPoisson}}]\label{derivativeschar}
For any $(s,t)\in\mathcal{I}^2_T$ and $x,v\in \R^3$ we have
\begin{equation}\label{cui6}
\begin{split}
|\widetilde{Y}(x,v,s,t)|&\lesssim    \varep_1\min\{\langle s\rangle^{-1+2\delta}\langle v\rangle, \langle s\rangle^{-1/6}\},\\
|\widetilde{W}(x,v,s,t)|&\lesssim    \varep_1\min\{\langle s\rangle^{-2+2\delta}\langle v\rangle, \langle s\rangle^{-7/6}\},
\end{split}
\end{equation}
\begin{equation}\label{nov28eqn2}
\begin{split}
|\partial_t \widetilde{Y}(x,v,s,t)-(t-s) E( x+tv,t)|   &\lesssim \varepsilon_1\langle t-s\rangle \langle t\rangle^{-2+\delta}\langle s \rangle^{-1+1.1\delta},\\
|\partial_t\widetilde{W}(x,v,s,t)+E( x+tv,t)| &\lesssim \varepsilon_1\langle t-s\rangle \langle t\rangle^{-2+\delta}\langle s \rangle^{-2+1.1\delta},
\end{split}
\end{equation}
\begin{equation}\label{cui7}
\begin{split}
|\nabla_x\widetilde{Y}(x,v,s,t)|&\lesssim \varep_1\min\{\langle s\rangle^{-2+2.1\delta}\langle v\rangle, \langle s\rangle^{-7/6}\},\\
|\nabla_x\widetilde{W}(x,v,s,t)|&\lesssim \varep_1\min\{\langle s\rangle^{-3+2.1\delta}\langle v\rangle, \langle s\rangle^{-13/6}\},
\end{split}
\end{equation}
\begin{equation}\label{cui7.5}
\begin{split}
|\nabla_v\widetilde{Y}(x,v,s,t)|&\lesssim \varep_1\min\{\langle s\rangle^{-1+2.1\delta}\langle v\rangle, \langle s\rangle^{-1/6}\},\\
|\nabla_v\widetilde{W}(x,v,s,t)|&\lesssim \varep_1\min\{\langle s\rangle^{-2+2.1\delta}\langle v\rangle, \langle s\rangle^{-7/6}\}.
\end{split}
\end{equation}
\end{lemma}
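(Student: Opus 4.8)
The plan is to establish Lemma~\ref{derivativeschar} by a continuity (bootstrap) argument for the pair $(\widetilde{Y},\widetilde{W})$, based on the fixed-point identities \eqref{Lan7}--\eqref{Lan7.5} and the field bounds \eqref{Laga11}--\eqref{Laga12} (which themselves follow from the bootstrap hypothesis \eqref{YW12}). Everywhere one inserts the decomposition $E=E^{stat}+\Re\{e^{-i\tau}E^{osc}\}$ into the identity for $\widetilde{W}$ in \eqref{Lan7} and proves the two entries of each $\min$ separately. For the ``$\langle v\rangle$-entries'' the static part is integrated directly, $\int_s^t\|E^{stat}(\tau)\|_{L^\infty}\,d\tau\lesssim\varepsilon_1\langle s\rangle^{-2+2\delta}$, while for the oscillatory part one integrates by parts in $\tau$ using $e^{-i\tau}=i\tfrac{d}{d\tau}e^{-i\tau}$: the boundary terms are $O(\varepsilon_1\langle s\rangle^{-2+\delta})$, and the bulk term produces $\partial_\tau[E^{osc}(x+\tau v+\widetilde{Y},\tau)]=(\partial_\tau E^{osc})+\nabla_x E^{osc}\cdot(v+\widetilde{W})$, which by \eqref{Laga12} and the a~priori smallness of $\widetilde{W}$ contributes $O(\varepsilon_1\langle s\rangle^{-2+2\delta}\langle v\rangle)$. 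This gives the first entry for $\widetilde{W}$; integrating $\partial_s\widetilde{Y}=\widetilde{W}$ from $t$ down to $s$ gives the one for $\widetilde{Y}$. The weight $(\tau-s)$ in the direct identity for $\widetilde{Y}$ in \eqref{Lan7} is exactly why the oscillation must be used here, the static part alone giving only a growing bound.

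The uniform-in-$v$ entries ($\langle s\rangle^{-7/6}$ for $\widetilde{W}$, $\langle s\rangle^{-1/6}$ for $\widetilde{Y}$) form the technical heart: there one cannot pay a factor $\langle v\rangle$ for the term $\nabla_x E^{osc}\cdot v$. I would split on the size of $|v|$ relative to a power of $\langle s\rangle$ — the threshold $|v|\sim\langle s\rangle^{5/6}$ is what produces the exponents $1/6$ and $7/6$. For $|v|\lesssim\langle s\rangle^{5/6}$ the $\langle v\rangle$-entry already does the job; for $|v|\gtrsim\langle s\rangle^{5/6}$ one uses that along the transport curve $\ell(\tau):=x+\tau v+\widetilde{Y}(x,v,\tau,t)$ the operator $v\cdot\nabla_x$ is, up to the slow drift and a time derivative, a total $\tau$-derivative,
\[
v\cdot\nabla_x E(\ell(\tau),\tau)=\frac{d}{d\tau}\big[E(\ell(\tau),\tau)\big]-(\partial_\tau E)(\ell(\tau),\tau)-\nabla_x E(\ell(\tau),\tau)\cdot\widetilde{W}(x,v,\tau,t),
\]
so a further integration by parts removes the factor $|v|$; balancing the outcome against the $L^1_x$-control of the dyadic pieces of $E$ (via slicing in the direction of $v$, using $|\ell'(\tau)|\approx|v|$) yields the stated non-sharp exponents. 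Taking $\bar\varepsilon$ small then closes the continuity argument for \eqref{cui6}.

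The derivative bounds \eqref{nov28eqn2}, \eqref{cui7}, \eqref{cui7.5} follow by differentiating \eqref{Lan7}--\eqref{Lan7.5} and rerunning the same scheme with $\nabla_{x,v}E$ in place of $E$, plus a Gr\"{o}nwall step to absorb the occurrences of $\nabla_{x,v}\widetilde{Y}$ coming from the chain rule. For \eqref{nov28eqn2} one differentiates the identity for $\widetilde{Y}$ in $t$: the endpoint contribution of the outer integral equals $(t-s)E(x+tv+\widetilde{Y}(x,v,t,t),t)=(t-s)E(x+tv,t)$ since $\widetilde{Y}(x,v,t,t)=0$, which is the main term displayed in \eqref{nov28eqn2}, and the remainder $\int_s^t(\tau-s)\,\nabla_x E(\ell(\tau),\tau)\cdot\partial_t\widetilde{Y}(x,v,\tau,t)\,d\tau$ is estimated by inserting the leading-order form of $\partial_t\widetilde{Y}$ and using $\|\nabla_x E(\tau)\|_{L^\infty}\lesssim\varepsilon_1\langle\tau\rangle^{-3+2\delta}$ from \eqref{Laga11}--\eqref{Laga12}; the bound for $\partial_t\widetilde{W}$ is analogous. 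The main obstacle is exactly the large-$|v|$ regime: controlling $v\cdot\nabla_x E$ integrated in time without losing a power of $\langle v\rangle$, which is what dictates both the two-piece $\min$ structure and the intermediate exponents $1/6$ and $7/6$.
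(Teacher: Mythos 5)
Your scheme for the $\langle v\rangle$-weighted entries of \eqref{cui6} --- insert $E=E^{stat}+\Re\{e^{-i\tau}E^{osc}\}$ into \eqref{Lan7}, integrate the static part using \eqref{Laga11}, integrate the oscillatory part by parts in $\tau$ using \eqref{Laga12}, then integrate $\partial_s\widetilde Y=\widetilde W$ --- is exactly the route the text indicates (the lemma is quoted from \cite{IoPaWaWiPoisson}; no proof is reproduced in this paper), and your estimates for those entries are correct. The genuine gap sits precisely where you place the ``technical heart'': the uniform-in-$v$ bounds $\langle s\rangle^{-7/6}$ and $\langle s\rangle^{-1/6}$ are not actually obtained. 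Your proposed device for $|v|\gtrsim\langle s\rangle^{5/6}$ is circular: with $F(\tau):=E^{osc}(\ell(\tau),\tau)$ the dangerous term is, by definition, $v\cdot\nabla_xE^{osc}(\ell(\tau),\tau)=F'(\tau)-(\partial_\tau E^{osc})(\ell(\tau),\tau)-\widetilde W\cdot\nabla_xE^{osc}(\ell(\tau),\tau)$, and integrating the $F'$ piece by parts,
\begin{equation*}
\int_s^t e^{-i\tau}F'(\tau)\,d\tau=\big[e^{-i\tau}F(\tau)\big]_{\tau=s}^{\tau=t}+i\int_s^t e^{-i\tau}F(\tau)\,d\tau,
\end{equation*}
reproduces exactly the integral you started from; the boundary terms cancel those of the first integration by parts and the net result is the tautology $I=I$, so no factor of $|v|$ is removed. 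The alternative you gesture at --- slicing along $\ell(\tau)$ with $|\ell'(\tau)|\approx|v|$ against the $L^1_x$ control of the dyadic pieces $P_kE$ --- is the right kind of transport gain, but it is only asserted: as stated it does not apply because $E$ depends on $\tau$ as well as on position, so the change of variables along the line needs either a time/frequency decomposition that tracks the resonance $1-\langle v,\xi\rangle$ (the same small denominators as in \eqref{FormulaRIIP}) or an $L^1_x(\sup_\tau)$-type bound that the bootstrap norms \eqref{may12eqn21} do not directly provide. Moreover the threshold you propose does not reproduce the stated exponent: at $|v|\sim\langle s\rangle^{5/6}$ the $\langle v\rangle$-entry only gives $\langle s\rangle^{-7/6+2\delta}$, so even the easy half of the splitting misses $\langle s\rangle^{-7/6}$ by $\langle s\rangle^{2\delta}$.

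A secondary but real issue concerns \eqref{nov28eqn2}: the exponents $\langle s\rangle^{-1+1.1\delta}$, $\langle s\rangle^{-2+1.1\delta}$ do not come out of the Gr\"onwall step as you describe it. Your identification of the main term $(t-s)E(x+tv,t)$ is correct, but estimating the remainder $\int_s^t(\tau-s)\nabla_xE(\ell(\tau),\tau)\cdot\partial_t\widetilde Y(x,v,\tau,t)\,d\tau$ with the blanket bound $\|\nabla_xE(\tau)\|_{L^\infty}\lesssim\varep_1\langle\tau\rangle^{-3+2\delta}$ yields at best $\varep_1^2\langle t-s\rangle\langle t\rangle^{-2+\delta}\langle s\rangle^{-1+2\delta}$, and since $T$ is arbitrary the factor $\varep_1\langle s\rangle^{0.9\delta}$ is not uniformly bounded, so $1.1\delta$ cannot be recovered this way; one must again split $\nabla_xE$ into static and oscillatory parts inside the error integral and exploit the $e^{-i\tau}$ oscillation (the static part, with its $\langle\tau\rangle^{-4+2\delta}\log$ decay from \eqref{Laga11}, is harmless). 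So both the uniform-in-$v$ entries of \eqref{cui6} and the precise exponents in \eqref{nov28eqn2}, \eqref{cui7}, \eqref{cui7.5} require additional ingredients beyond what your sketch supplies.
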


\subsection{Contributions of the initial data} The components $\rho^{stat}_{1,I}, \rho^{stat}_{1,II}, \rho^{osc}_{1,I}, \rho^{osc}_{1,II}$ can be bounded easily, since they are defined explicitly in terms of the initial data (see the formula \eqref{qwp1}), using only the initial-data assumptions \eqref{bootinit} and some of the bounds in Lemma \ref{derivativeschar} corresponding to $s=0$.

\subsection{Contributions of the reaction term} The bounds on the components $\rho^{stat}_{2,I}$, $\rho^{stat}_{2,II}$, $\rho^{osc}_{2,I}$, $\rho^{osc}_{2,II}$ are more complicated. 

To illustrate the argument, consider for example the component $\rho^{osc}_{2,II}$. Since 
\begin{equation*}
\begin{split}
&X(x+(t-s)v,v,s,t)=\widetilde{Y}(x-sv,v,s,t)+x,\\
&V(x+(t-s)v,v,s,t)=\widetilde{W}(x-sv,v,s,t)+v,
\end{split}
\end{equation*}
it follows from \eqref{qwp1} that
\begin{equation*}
\begin{split}
(&\partial_tL_{2,j})(x,v,s,t)\\
&=\big[-(\partial_iE^l)(x+\widetilde{Y}(x-sv,v,s,t),s) (\partial_t\widetilde{Y}^i)(x-sv,v,s,t) \partial_l M_1(v+\widetilde{W}(x-sv,v,s,t))\\
&-E_l(x+\widetilde{Y}(x-sv,v,s,t),s)  (\partial_t\widetilde{W}^i)(x-sv,v,s,t) \partial_i \partial_{l}M_1(v+\widetilde{W}(x-sv,v,s,t))\big]\widetilde{\varphi}_j(v).
\end{split}
\end{equation*}
Using the formulas \eqref{Lan7} we have
\begin{equation*}
\begin{split}
&(\partial_t\widetilde{Y})(x-sv,v,s,t)=(t-s) E( x+(t-s)v,t)+{\textrm{Error terms}}\\
&(\partial_t\widetilde{W})(x-sv,v,s,t)=-E( x+(t-s)v,t)+{\textrm{Error terms}}.
\end{split}
\end{equation*}
We combine these identities to see that
\begin{equation*}
\begin{split}
&(\partial_tL_{2,j})(x,v,s,t)\approx-(\partial_iE^l)(y-(t-s)v+\widetilde{Y}(y-tv,v,s,t),s)(t-s)E^i(y,t) \\
&\qquad\qquad\qquad\qquad\qquad\times\widetilde{\varphi}_j(v)\partial_{l}M_1(v+\widetilde{W}(y-tv,v,s,t))\\
&+ E^l(y-(t-s)v+\widetilde{Y}(y-tv,v,s,t),s)E^i(y,t)  \cdot\widetilde{\varphi}_j(v)\partial_i \partial_{l}M_1 (v+\widetilde{W}(y-tv,v,s,t)),
\end{split}
\end{equation*}
up to error terms. Therefore, we need to estimate bilinear expressions involving two copies of the electric field $E$ and its derivatives. It turns out that such expressions appear repeatedly in the analysis of the reaction terms.

\subsubsection{Trilinear estimates} Many of the terms arising in the analysis of the reaction term, including the component $\rho^{osc}_{2,II}$ discussed above and many error terms, fit into a general framework of trilinear operators. We define a sufficiently general class here and state some of the relevant estimates. 

Assume that $\theta_1,\theta_2 \in [0, 1]$ and $s\in[0,T]$, and define the \emph{trilinear operators} 
\begin{equation}\label{ropi1}
\begin{split}
&  \mathcal{Q}_{j,k}(f, g; C)(x,\gamma, \tau, s):=\int_{\R^3} \int_{\R^3}\mathcal{K}_{j,k}(x-y, v,\tau,s) C(x, y, v, \gamma, \tau, s)\\
&\times f(y-(s-\tau)v +\theta_1\widetilde{Y}(y-sv, v,\tau,s),\tau)g(y-(s-\gamma)v + \theta_2\widetilde{Y}(y-sv,v,\gamma,s), \gamma)\,dy d v, 
\end{split}
\end{equation}
where $\gamma,\tau\in[0,s]$, $(j,k)\in\Z_+\times\Z$, and the kernel $\mathcal{K}_{j,k}$ satisfies the uniform estimates 
 \begin{equation}\label{june16eqn4}
\big| \mathcal{K}_{j,k}(y, v,\tau,s)\big|\lesssim 2^{3k}(1+2^k|y|)^{-8}\cdot 2^{-3j}\widetilde{\varphi}_{[j-4,j+4]}(v).
\end{equation}

We assume that the coefficient $C$ is differentiable in $\gamma$, and define
\begin{equation}\label{june16eqn3}
\begin{split}
m(C)(\gamma,\tau, s)&:= \|C(x, y, v,\gamma,\tau,s)\|_{L^\infty_{x,y,v}}  + \|\langle\gamma\rangle\partial_\gamma C(x, y, v,\gamma,\tau,s)\|_{L^\infty_{x,y,v}}.
\end{split}
\end{equation}

Assume that $m_2\geq 0$, $t_3,t_4\in [2^{m_2}-1,2^{m_2+1}]\cap [0,s]$, and define the integrated trilinear operators
\begin{equation}\label{june16eqn1}
\mathcal{B}^{ }_{j,k}(f, g; C)(x, \tau, s) = \int_{t_3}^{t_4}\mathcal{Q}_{j,k}(f, g; C)(x, \gamma, \tau, s)\,d\gamma.
\end{equation}

\begin{lemma}[Trilinear estimates \protect{\cite[Lemma 5.1]{IoPaWaWiPoisson}}]\label{keybilinearlemma}
Let  $s\in [0, T]$,  $\tau \in [0, s],$ $2^{m_2-1}\leq s$. With the assumptions of the bootstrap proposition \ref{MainBootstrapProp}, and the notation and assumptions above,   then
\begin{align}
\| \mathcal{B}_{j,k}(\nabla E,  E; C) (., \tau, s)  \|_{B^0_s}&\lesssim   \varep_1^2 \min\{2^{- 1.1 m_2}, \langle \tau \rangle^{-1.1} \}  m^\ast(C)(\tau,s),\label{june16eqn31A1}\\
\| \mathcal{B}_{j,k}(E, \nabla E; C)  (., \tau, s)\|_{B^0_s}&\lesssim \varep_1^2 \min\{2^{- 1.1 m_2}, \langle \tau \rangle^{-1.1} \}  m^\ast(C)(\tau,s),\label{june16eqn31A2}\\
\| \mathcal{B}_{j,k}(E,  E; C) (., \tau, s)\|_{B^0_s}  &\lesssim \varep_1^2 \min\{2^{- 0.1 m_2}, \langle \tau \rangle^{-0.1} \}  m^\ast(C)(\tau,s),\label{june16eqn32}\\
\| \mathcal{B}_{j,k}(\nabla E, \nabla E; C) (., \tau, s)\|_{B^0_s}&\lesssim \varep_1^2  \min\{2^{- 2.1 m_2}, \langle \tau \rangle^{-2.1} \}  m^\ast(C)(\tau,s),\label{june16eqn33}
\end{align}
where $m^\ast(C)(\tau,s):=\sup_{\gamma\in [t_3,t_4]}m(C)(\gamma, \tau, s)$.
\end{lemma}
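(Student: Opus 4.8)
\emph{Strategy.} The kernel $\mathcal K_{j,k}$ is localized at $x$-frequencies $|\xi|\approx 2^k$ (it arises by applying $P_k$ in the construction of the profiles $h_{2,j,k}$, cf.\ \eqref{qwp2}), hence so is $\mathcal B_{j,k}(f,g;C)(\cdot,\tau,s)$; writing $\mathcal B:=\mathcal B_{j,k}(f,g;C)(\cdot,\tau,s)$ for short we therefore have $\|\mathcal B\|_{B^0_s}\les\|\mathcal B\|_{L^1_x}+\ip{s}^3\|\mathcal B\|_{L^\infty_x}$. The plan is, first, to prove for each fixed $\gamma\in[t_3,t_4]$ a ``frozen'' bilinear estimate for $\mathcal Q_{j,k}(f,g;C)(\cdot,\gamma,\tau,s)$ in these two norms, with constants assembled from $m(C)$ and from the bootstrap bounds on $f(\tau)$ and $g(\gamma)$ provided by \eqref{Laga11}--\eqref{Laga12}; and, second, to integrate over $\gamma$, using the oscillation $e^{-i\gamma}$ carried by the oscillatory component of the electric field so as not to lose a full power $2^{m_2}$.

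\emph{The frozen bilinear estimate.} Fix $\gamma$ and let $f,g\in\{E,\nabla E\}$, evaluated at times $\tau$ and $\gamma$. Using the kernel bound \eqref{june16eqn4} we integrate out the $x$-variable (for $\|\cdot\|_{L^1_x}$) or the $y$-variable (for $\|\cdot\|_{L^\infty_x}$) against the bump $2^{3k}(1+2^k|\cdot|)^{-8}$; this costs $\les 2^{-3j}$, localizes $|v|\approx 2^j$, and factors out $|C|\le m(C)(\gamma,\tau,s)$. What is left is a double integral in $(y,v)$ of $|f|\,|g|$ evaluated along the deformed free flow $y-(s-\cdot)v+\theta_\bullet\widetilde Y(y-sv,v,\cdot,s)$. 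One then places one of the two fields in $L^\infty_x$ and changes variables in $v$ so that the argument of the remaining field becomes the integration variable. By Lemma \ref{derivativeschar} the $\widetilde Y$-corrections are $O(\varep_1)$ in the relevant $C^1$-sense, so for $\varep_1$ small this is a bi-Lipschitz diffeomorphism onto its image with Jacobian comparable to $(s-\tau)^3$, $(s-\gamma)^3$, or $|\tau-\gamma|^3$ according to which pair of arguments is being compared, provided the corresponding time gap is $\gtrsim 1$. Combined with \eqref{Laga11}--\eqref{Laga12} this yields, schematically, $\|\mathcal Q_{j,k}(f,g;C)(\cdot,\gamma,\tau,s)\|_{L^1_x}\les m(C)(\gamma,\tau,s)\,\|f(\tau)\|_{L^\infty_x}\,\|g(\gamma)\|_{\ell^1_x}$ and, using in addition the dispersive Jacobian gain $\min\{(s-\gamma)^{-3},(s-\tau)^{-3}\}$ (which is $\approx\ip{s}^{-3}$ once $\gamma$ and $\tau$ are far from $s$), the weighted bound $\ip{s}^3\|\mathcal Q_{j,k}(f,g;C)(\cdot,\gamma,\tau,s)\|_{L^\infty_x}\les m(C)(\gamma,\tau,s)\,\|f(\tau)\|_{L^\infty_x}\,\|g(\gamma)\|_{\ell^1_x}$; here $\|\cdot\|_{\ell^1_x}$ denotes the dyadically summed $L^1_x$-type norm of $\nabla_x^{-1}$ of the relevant piece of the density, which is exactly what the $B^0$-component of the norms \eqref{may12eqn21} controls. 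In the remaining regime $\min\{s-\gamma,s-\tau\}\les 1$ — which forces $s,\gamma\approx 2^{m_2}$, so that $[t_3,t_4]\cap[0,s]$ has length $O(1)$ — no change of variables is available; there one simply estimates $L^\infty_x\times L^\infty_x$, and the fast time decay of $E$ at $t\approx 2^{m_2}$ absorbs both the weight $\ip{s}^3\approx 2^{3m_2}$ and the missing dispersive gain.

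\emph{Integration in $\gamma$ and the normal form.} The stationary part $E^{stat}$ of the $\gamma$-factor decays at the enhanced rate $\ip{\gamma}^{-3+2\delta}$ (resp.\ $\ip{\gamma}^{-4+2\delta}\ln(2+\gamma)$ for its gradient) by \eqref{Laga11}, so integrating it over $\gamma\approx 2^{m_2}$ is harmless. For the oscillatory part $\Re\{e^{-i\gamma}E^{osc}(\gamma)\}$ one integrates by parts in $\gamma$: the phase is non-stationary, and each $\partial_\gamma$ lands either on $E^{osc}$ (gaining $\ip{\gamma}^{-1}$ by \eqref{Laga12}), on $C$ (gaining $\ip{\gamma}^{-1}$, since $\ip{\gamma}\partial_\gamma C$ enters $m(C)$ in \eqref{june16eqn3}), or on the characteristic-dependent amplitude (again $O(\varep_1\ip{\gamma}^{-1})$ by Lemma \ref{derivativeschar}), while the boundary terms are controlled by the integrand at $\gamma\approx 2^{m_2}$. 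This upgrades the naive rate $2^{-(1-O(\delta))m_2}$ to the claimed $2^{-am_2}$, with $a=0.1+(\text{number of derivatives falling on the electric fields})\in\{0.1,1.1,2.1\}$. The alternative $\ip{\tau}^{-a}$ comes directly from the decay of the time-$\tau$ factor $f(\tau)$ (for instance $\|\nabla E(\tau)\|_{L^\infty}\les\varep_1\ip{\tau}^{-3+2\delta}$ in the $\mathcal B(\nabla E,\cdot)$ cases), and taking the minimum of the two estimates yields \eqref{june16eqn31A1}--\eqref{june16eqn33}.

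\emph{The main difficulty.} The crux is the weighted $L^\infty_x$ estimate — producing $\ip{s}^{-3}$ uniformly. This is a velocity-averaging (dispersive) effect carried by the Jacobian of the change of variables $v\mapsto(\text{argument of a transported factor})$, and it degrades exactly when the relevant time gap is $O(1)$; that regime must be isolated and handled by the short-$\gamma$-interval observation above, with the time weights tracked carefully. Compounding this, the electric field $E=\nabla_x\Delta_x^{-1}\rho$ admits no useful unweighted $L^1_x$ (or $L^2_x$) bound at low frequency — indeed its zero mode is a non-decaying oscillation $\propto(\int\!\!\int f_0)\cos t$, which is precisely why it must be placed into $\rho^{osc}$, where only $\ip{t}^{\delta}$-type control is available. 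Hence throughout the change-of-variables argument one cannot simply write $\|E(\gamma)\|_{L^1_x}$: one must retain the dyadic decomposition of $E$ in $x$-frequency and exploit the $\ip{\nabla_x}$-gain in $Stat_\delta$ together with the a priori spatial localization of $\rho$ to sum the low-frequency contributions — and this is also why the $\mathcal B(E,E)$ bound \eqref{june16eqn32} is so much weaker than the others. Everything else is bookkeeping of the time weights $\tau$, $\gamma$, $s$, and $2^{m_2}$.
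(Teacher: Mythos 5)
Your handling of the $\gamma$-integration is exactly the paper's route: decompose the electric field in the second slot (the factor $g$, evaluated at time $\gamma$) into $E^{stat}+\Re\{e^{-i\gamma}E^{osc}\}$ via \eqref{Laga5}, integrate the static part directly using the enhanced decay \eqref{Laga11}, and integrate by parts in $\gamma$ against the phase $e^{-i\gamma}$ for the oscillatory part, with each $\partial_\gamma$ landing on $E^{osc}$ (controlled by \eqref{Laga12}), on $C$ (controlled by the $\langle\gamma\rangle\partial_\gamma C$ term in \eqref{june16eqn3}), or on the characteristic corrections (controlled by Lemma \ref{derivativeschar}). So the normal-form half of your argument coincides with the paper's.

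The gap is in the ``frozen bilinear'' step that is supposed to produce the $B^0_s$ norm. After integrating out the kernel \eqref{june16eqn4} and changing variables along the perturbed flow, both your $L^1_x$ bound and your weighted $L^\infty_x$ bound require an integrable-in-space norm of one electric-field factor; you call it $\norm{g(\gamma)}_{\ell^1_x}$ and assert it ``is exactly what the $B^0$-component of the norms controls.'' It is not: the norms \eqref{normA}, \eqref{may12eqn21} control $\sup_k\norm{P_k\rho}_{L^1}$ (the $\langle\nabla_x\rangle$ factor in $Stat_\delta$ is $O(1)$ at low frequency), whereas $\norm{P_kE}_{L^1}\approx 2^{-k}\norm{P_k\rho}_{L^1}$, so the dyadic sum over $k\le 0$ diverges; no unweighted $L^1_x$ or $L^2_x$ bound on $E$ (or even $\nabla E$) follows from the bootstrap data, and your proposed remedy (the $\langle\nabla_x\rangle$-gain plus ``a priori spatial localization of $\rho$'') is vacuous at low frequencies, respectively not an available hypothesis. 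You yourself identify this as ``the main difficulty,'' but you do not close it, and closing it is precisely where the detailed frequency-localized bookkeeping of the companion paper enters. A second, more minor, misstep: your dichotomy for the degenerate Jacobian is $\min\{s-\gamma,s-\tau\}\gtrsim 1$ versus $\lesssim 1$, with the claim that in the latter case the relevant $\gamma$-interval has length $O(1)$; but the Jacobian gain is $(s-\gamma)^{-3}$, which falls far short of $\langle s\rangle^{-3}$ throughout the intermediate regime $1\ll s-\gamma\ll s$ (which can cover most of $[t_3,t_4]$ when $s\approx 2^{m_2+1}$), so the loss there must be recovered from the decay of the $\gamma$-factor at $\gamma\approx s$, not from shortness of the interval. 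These two points are where the proof has real content beyond the (correct) stationary/oscillatory splitting and integration by parts.
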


To prove these bounds we use \eqref{Laga5} to decompose the electric field in the second position (the function $g$) into static and oscillatory components. The point is that the static component $E^{stat}$ satisfies strong decay bounds like \eqref{Laga11}, which lead to the desired conclusions upon integration in $\gamma$. On the other hand, to estimate the contributions of the oscillatory component $E^{osc}$ we can integrate by parts in time in $\gamma$.

\end{document}